\numberwithin{equation}{section}
\numberwithin{figure}{section}
\theoremstyle{plain}
\newtheorem{thm}{\protect\theoremname}[section]
\theoremstyle{plain}
\newtheorem{prop}[thm]{\protect\propositionname}
\newtheorem{corollary}[thm]{Corollary}
\theoremstyle{definition}
\theoremstyle{plain}
\newtheorem{lemma}[thm]{Lemma}
\theoremstyle{remark}
\theoremstyle{definition}
\newtheorem{definition}[thm]{Definition}
\newtheorem*{claim*}{Claim}
\theoremstyle{remark}
\newtheorem{remark}[thm]{Remark}
\theoremstyle{definition}
\newtheorem*{defn*}{\protect\definitionname}
\providecommand{\definitionname}{Definition}
\providecommand{\lemmaname}{Lemma}
\providecommand{\propositionname}{Proposition}
\providecommand{\remarkname}{Remark}
\providecommand{\theoremname}{Theorem}
\newcommand{\Rmnum}[1]{\uppercase\expandafter{\romannumeral #1}}
\newcommand{\real}{{\mathbb R}}
\newcommand{\nat}{{\mathbb N}}
\newcommand{\ent}{{\mathbb Z}}
\newcommand{\com}{{\mathbb C}}
\newcommand{\ce}{{\mathbb E}}
\newcommand{\A}{{\mathcal A}}
\newcommand{\D}{{\mathscr{D}}}
\newcommand{\F}{{\mathcal F}}
\renewcommand{\L}{{\mathcal L}}
\newcommand{\M}{{\mathcal M}}
\newcommand{\N}{{\mathcal N}}
\newcommand{\R}{{\mathcal R}}
\renewcommand{\S}{\mathcal{S}}
\newcommand{\Q}{{\mathcal Q}}
\newcommand{\e}{\varepsilon}
\newcommand{\f}{\varphi}
\newcommand{\ot}{\otimes}
\renewcommand{\1}{\mathbbm{1}}
\newcommand{\wt}{\widetilde}
\newcommand{\wh}{\widehat}
\newcommand{\be}{\begin{eqnarray*}}
	\newcommand{\ee}{\end{eqnarray*}}
\newcommand{\beq}{\begin{equation}}
\newcommand{\eeq}{\end{equation}}
\newcommand{\beqn}{\begin{equation*}}
\newcommand{\eeqn}{\end{equation*}}
\newcommand{\bs}{\begin{split}}
	\newcommand{\es}{\end{split}}
\newcommand{\K}{{\mathcal K}}
\newcommand\norm[1]{ \left\| #1 \right\| }
\newcommand\jdz[1]{ \left| #1 \right| }
\newcommand\sk[1]{ \left( #1 \right) }
\newcommand\mk[1]{ \left[ #1 \right] }
\newcommand\lk[1]{ \left\{ #1 \right\} }
\newcommand{\lara}[1]{\left\langle #1 \right\rangle}
\newcommand\supp{\text{supp}}
\renewcommand\d{{\rm d}}
\newcommand\Arg{{\rm Arg}}
\newcommand{\at}{{\rm at}}
\begin{document}
\pagestyle{myheadings}
\markboth{}{Best constants in the vector-valued L-P-S theory}	
	
	
\title{Best constants in the vector-valued Littlewood-Paley-Stein theory}

\thanks{{\it 2020 Mathematics Subject Classification:} Primary: 42B35, 47D03. Secondary: 46F10, 46E30}
\thanks{{\it Key words:} Littlewood-Paley-Stein inequalities; Hardy and BMO spaces; vector-valued tent spaces; holomorphic functional calculus; martingale type and cotype; intrinsic square function}

\author[Guixiang Hong]{Guixiang Hong}
\address{Institute for Advanced Study in Mathematics, Harbin Institute of Technology,  Harbin 150001, China}
\email{gxhong@hit.edu.cn}

\author[Zhendong  Xu]{Zhendong Xu}
\address{
	Laboratoire de Math{\'e}matiques, Universit{\'e} de Bourgogne Franche-Comt{\'e}, 25030 Besan\c{c}on Cedex, France}
\email{xu.zhendong@univ-fcomte.fr}

\author[Hao Zhang]{Hao Zhang}
\address{
Department of Mathematics, University of Illinois Urbana-Champaign, USA}
\email{hzhang06@illinois.edu}
\date{}

\begin{abstract}
Let $L$ be a sectorial operator of type $\alpha$ ($0 \leq \alpha < \pi/2$) on $L^2(\real^d)$ with the kernels of $\{e^{-tL}\}_{t>0}$ satisfying certain size and regularity conditions. Define
$$ S_{q,L}(f)(x) = \sk{\int_0^{\infty}\int_{|y-x| < t} \norm{tL{e^{-tL}} (f)(y) }_X^q \,\frac{\d y\d t}{t^{d+1}}}^{\frac{1}{q}},$$
$$G_{q,{L}}(f)=\sk{\int_0^{\infty} \norm{t{L}{e^{-t{L}}} (f)(y) }_X^q \,\frac{\d t}{t}}^{\frac{1}{q}}.$$
We show that for $\underline{\mathrm{any}}$  Banach space $X$, $1 \leq p < \infty$ and $1 < q < \infty$  and $f\in C_c(\mathbb R^d)\otimes X$, there hold
\begin{align*}
    p^{-\frac{1}{q}}\| S_{q,{\sqrt{\Delta}}}(f) \|_p \lesssim_{d, \gamma, \beta} \norm{ S_{q,L}(f) }_p \lesssim_{d, \gamma, \beta} p^{\frac{1}{q}}\| S_{q,{\sqrt{\Delta}}}(f) \|_p, 
    \end{align*}
        \begin{align*}
    p^{-\frac{1}{q}}\| S_{q,L}(f) \|_p \lesssim_{d, \gamma, \beta} \| G_{q,L}(f) \|_p \lesssim_{d, \gamma, \beta} p^{\frac{1}{q}}\| S_{q,L}(f) \|_p,
\end{align*}
where $\Delta$ is the standard Laplacian; moreover all the orders appeared above are {\it optimal} as $p\rightarrow1$. This, combined with the existing results in \cite{MTX,Ouyang2010}, allows us to resolve partially Problem 1.8, Problem A.1 and Conjecture A.4 regarding the optimal Lusin type constant and the characterization of martingale type in a recent remarkable work due to Xu \cite{Xu2021} . 

Several difficulties originate from the arbitrariness of $X$, which excludes the use of vector-valued Calder\'on-Zygmund theory. To surmount the obstacles,  we introduce the novel vector-valued Hardy and BMO spaces associated with sectorial operators;  in addition to Mei's duality techniques and Wilson's intrinsic square functions developed in this setting, the key new input is the vector-valued tent space theory and its unexpected amalgamation with these `old' techniques. 
\end{abstract}

\maketitle

\section{Introduction}\label{Introduction}
{Motivated by Banach space geometry \cite{Pi1975,Pi1986} and Stein's semigroup theory \cite{Stein1970}, the investigation of the vector-valued Littlewood-Paley-Stein theory has started with Xu's Poisson semigroup on the unit circle \cite{Xu1998}, and then was continued in \cite{MTX, Xu2015 ,Xu2020} for symmetric Markovian semigroups. Afterwards, Betancor {\it et al} developed this theory in some special cases which are not Markovian (cf. \cite{Betancor2007, Betancor2011, Betancor2011area,betancor2013}), such as Schr\"odinger, Hermite, Laguerre semigroups {\it etc.}, see also \cite{Abu2011,  Betancor2016, Betancor2020, Hytonen2007,Ouyang2010, Torrea2014, Harboure2003} for related results}. 
In a recent remarkable paper \cite{Xu2021}, Xu investigated for the first time the vector-valued Littlewood-Paley-Stein inequalities for semigroups of regular contractions $\{e^{-tL}\}_{t>0}$ on $L^p(\Omega)$ for a {\it fixed} $1 < p < \infty$. That is, for a Banach space $X$ of {martingale cotype} $q$ ($2 \leq q < \infty$), he showed the Lusin cotype of $X$ relative to $\{e^{-t\sqrt{L}}\}_{t>0}$, in other words, there exists a constant $C>0$ such that 
\begin{align}\label{cotype}
\|G_{q,\sqrt{L}}(f)\|_{p}\leq C\|f\|_{L^p(X)},\quad\forall f\in L^p(\Omega)\otimes X,
\end{align}
where 
$$G_{q,\sqrt{L}}(f)(x)=\sk{\int_0^{\infty} \norm{t\sqrt{L}{e^{-t\sqrt{L}}} (f)(x) }_X^q \,\frac{\d t}{t}}^{\frac{1}{q}}.$$
More importantly, by deeply exploring holomorphic functional calculus, Fendler's dilation, Calder\'on-Zygmund theory and Wilson's intrinsic square functions, he was able to obtain the sharp bounds depending on the martingale cotype constant, and the latter in turn enables him to resolve an open problem posed by Naor and Young \cite{Naor2022}. More precisely, let $\mathsf{L}^{\sqrt{L}}_{c,q, p}(X)$ be the least constant $C$ in \eqref{cotype}---the Lusin cotype constant of $X$, and $\mathsf{M}_{c,q}(X)$ the martingale cotype $q$ constant of $X$, he obtained
\begin{align}\label{cotypeconstant}  \mathsf{L}^{\sqrt{L}}_{c,q, p}(X) \lesssim \max\lk{ p^{\frac{1}{q}}, p' }\mathsf{M}_{c, q}(X)\end{align}
with the order $\max\lk{ p^{\frac{1}{q}}, p' }$ being sharp. We refer the reader to Section \ref{Applications} for the definition of $\mathsf{M}_{c, q}(X)$ and the martingale type constant $\mathsf{M}_{t, q}(X)$. 

By duality, the converse inequality of \eqref{cotype} also holds under the condition that $X$ is of  {martingale type} $q$ ($1<q \leq2$)
$$\|f-\mathbf{F}(f) \|_{L^p(X)}\leq C\|G_{q,\sqrt{L}}(f)\|_{p},\quad\forall f\in L^p(\Omega)\otimes X,$$
where $ \mathbf{F} $ is the obvious vector-valued extension of the projection from $L^p(\Omega)$ onto the fixed point space of $\{e^{-tL}\}_{t > 0}$, and the resulting {\it type} bounds satisfy
\begin{align}\label{typeconstant}
\mathsf{L}^{\sqrt{L}}_{t,q, p}(X) \lesssim \max\lk{ p, p'^{\frac{1}{q'}} }\mathsf{M}_{t, q}(X).
\end{align}
Nevertheless the order $\max\lk{ p, p'^{\frac{1}{q'}}}$ is now very likely to be suboptimal suggested by the special case $L=\Delta$---the Laplacian on $\mathbb R^d$, $q=2$ and $X=\mathbb C$,  where 
\begin{align}\label{type LC2}\mathsf{M}_{t, 2}(\mathbb C)=1, \ \mathrm{and}\ \sqrt{p}\lesssim \mathsf{L}^{\sqrt{\Delta}}_{t,2, p}(\mathbb C) \lesssim p, \end{align}
see for instance \cite[Theorem 1]{Xu2022}. The sharpness of \eqref{type LC2} when $p\rightarrow 1$ is essentially equivalent to the fact that $L^1(\mathbb R^d)$-norm of the classical $g$-function controls that of the Lusin square function, which dominates in turn $L^1(\mathbb R^d)$-norm of the function itself; this involves the deep theory of Hardy/BMO spaces. Other than this special case, the problem of determining the optimal order of $\mathsf{L}^{\sqrt{L}}_{t, q, p}(X)$ when $p\rightarrow1$ in \eqref{typeconstant} has been left open widely even in the case $L=\Delta$, see e.g. Remark 1.3, Problem 1.8 and Problem 8.4 in the aforementioned paper \cite{Xu2021}. For the other endpoint-side, the optimal order of $\mathsf{L}^{\sqrt{L}}_{t,q, p}(X)$ as $p\rightarrow\infty$ has been determined in \cite{XZ2023} for all symmetric Markovian semigroups. However it seems much harder to consider the corresponding problem for a fixed semigroup, and actually the special case $\mathsf{L}^{\sqrt{\Delta}}_{t,2, p}(\mathbb C)$ remains open {(cf. \cite[Problem 5]{Xu2022})}. 

In the present paper, we will determine the optimal order of $\mathsf{L}^{{L}}_{t, q, p}(X)$ as $p\rightarrow1$ in \eqref{typeconstant} for a large class of approximation identities $\{e^{-tL}\}_{t>0}$ on $\mathbb R^d$, and thus answer the questions mentioned in \cite[Remark 1.3 and Problem 1.8]{Xu2021}. Moreover,  our result will assert that the Lusin type of $X$ relative to this class of approximation identities implies the martingale type of $X$, and thus partially resolves \cite[Problem A.1 and Conjecture A.4]{Xu2021}.

Let $L$ be a sectorial operator of type $\alpha$ ($0 \leq \alpha < \pi/2$) on $L^2(\real^d)$, and thus it generates a holomorphic semigroup  $e^{-zL}  $ with $0\leq|\Arg(z)|<\pi/2-\alpha$. Partially inspired by \cite[Section 6.2.2]{DXT}, the kind of approximation identity $\{e^{-tL}\}_{t>0}$ that we will be interested in in the present paper is assumed to have kernel $K(t, x,y)$ satisfying the following three assumptions:  there exist positive constants $ 0 <\beta, \gamma \leq 1  $ and $c$ such that for any $t>0$, $ x,y,h \in \real^d $,

\begin{equation}\label{pt10}
    |K(t, x, y)| \leq \frac{c t^\beta }{(t + |x - y|)^{d + \beta}},
\end{equation}
\begin{equation}\label{pt2}
  | K(t, x+h, y) - K(t,  x, y) | + | K(t, x, y+h) - K(t, x, y) | \leq \frac{c|h|^\gamma t^\beta}{(t + |x-y|)^{d+\beta+\gamma}}
\end{equation}
whenever $ 2|h| \leq t + |x-y| $, and 
\begin{equation}\label{pt3}
   \int_{\real^d} K(t, x, y) \,\d x = \int_{\real^d} K(t, x, y)\,\d y = 1.
\end{equation}
One may find these concepts in Section \ref{Preliminaries}. Then it is well-known (see e.g. \cite{Xu2021}) that the semigroup $\{e^{-tL}\}_{t>0}$ extends to $L^p(\real^d; X)$ ($1\leq p \leq \infty$), where $L^p(\real^d; X)$ is the space of all strongly measurable functions $f: \real^d \to X$ such that $\|f(x)\|_X \in L^p(\real^d)$. The resulting semigroup is still denoted by $\{e^{-tL}\}_{t>0}$ without confusion.

Let $1<q<\infty$, the $q$-variant of Lusin area integral 
associated with $L$ is defined as follows: for $f\in C_c(\mathbb R^d)\otimes X$,
\begin{gather*}
    S_{q,L}(f)(x) = \sk{\int_0^{\infty}\int_{|y-x| < t} \norm{tL{e^{-tL}} (f)(y) }_X^q \,\frac{\d y\d t}{t^{d+1}}}^{\frac{1}{q}}.
\end{gather*}

Our main result reads as below.

\begin{thm}\label{main2}
Let $L$ be a sectorial operator of type $\alpha$ ($0 \leq \alpha < \pi/2$) on $L^2(\real^d)$ satisfying \eqref{pt10}, \eqref{pt2} and \eqref{pt3}. Let $1 \leq p < \infty$ and $1 < q < \infty$. For ${\underline {any}}$ Banach space $X$ and $f\in C_c(\mathbb R^d)\otimes X$, there hold
\begin{align}\label{SL=SD}
    p^{-\frac{1}{q}}\| S_{q,{\sqrt{\Delta}}}(f) \|_p \lesssim_{\gamma, \beta} \norm{ S_{q,L}(f) }_p \lesssim_{\gamma, \beta} p^{\frac{1}{q}}\| S_{q,{\sqrt{\Delta}}}(f) \|_p, 
    \end{align}
        \begin{align}\label{SL=SG}
    p^{-\frac{1}{q}}\| S_{q,L}(f) \|_p \lesssim_{\gamma, \beta} \| G_{q,L}(f) \|_p \lesssim_{\gamma, \beta} p^{\frac{1}{q}}\| S_{q,L}(f) \|_p. 
\end{align}
{Moreover, the orders in both \eqref{SL=SD} and \eqref{SL=SG} are {\rm optimal} as $p\rightarrow1$.}
\end{thm}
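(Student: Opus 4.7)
The plan is to prove both \eqref{SL=SD} and \eqref{SL=SG} via a vector-valued Hardy/BMO framework adapted to sectorial operators. Since $X$ is arbitrary, vector-valued Calder\'on--Zygmund theory is unavailable and must be replaced by a vector-valued tent space $T^{p,q}(X)$, into which both $S_{q,L}(f)$ and $G_{q,L}(f)$ embed as conical and vertical tent-space norms of the common $X$-valued section $Q_L f(y,t) := tLe^{-tL}f(y)$. This reduces both comparisons to a single interpolation argument on the tent-space scale.

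For \eqref{SL=SD}, I would first handle the $L^q$-endpoint $p=q$: by Fubini $\|S_{q,L}(f)\|_q^q = c_d\|G_{q,L}(f)\|_q^q$, and the comparison of $G_{q,L}$ with $G_{q,\sqrt{\Delta}}$ at $p=q$ reduces, through the intrinsic vector-valued Hardy square function, to a scalar-kernel reproducing formula that holds for any Banach space $X$. The hard endpoint is an $L^\infty(X)\to \BMO_L(X)$ bound for $Q_L$---equivalently an $H^1_L(X)\to L^1(X)$ bound by Mei-type duality. Once established, real interpolation on tent spaces between $T^{q,q}(X)$ and the tent-space BMO produces exactly the $p^{\pm 1/q}$ constant in \eqref{SL=SD}, the exponent $1/q$ being the Marcinkiewicz-type weight of the $T^{p,q}$-interpolation scale.

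For \eqref{SL=SG}, both norms are tent-space norms of the \emph{same} Banach-valued section $Q_L f$: $\|S_{q,L}(f)\|_p = \|Q_L f\|_{T^{p,q}(X)}$ (conical) and $\|G_{q,L}(f)\|_p = \|Q_L f\|_{\widetilde T^{p,q}(X)}$ (vertical). The horizontal-vertical identification collapses at $p=q$ (Fubini) and agrees at the BMO endpoint (Mei's duality), so the same tent-space interpolation mechanism yields the $p^{\pm 1/q}$ factors in \eqref{SL=SG} as well.

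The main obstacle is the endpoint $L^\infty(X)\to \BMO_L(X)$ bound with constants independent of $X$. This is where the ``unexpected amalgamation'' signalled in the abstract comes into play: one uses Wilson's intrinsic square function to extract an intrinsic scalar control of the oscillation of $f$, bypassing any vector-valued maximal inequality, and then Mei's tent-space duality to close the estimate, with the regularity exponents $\beta,\gamma$ from \eqref{pt10}--\eqref{pt2} providing the necessary off-diagonal decay of $tLe^{-tL}$ at the atomic level. Optimality of the exponent $1/q$ as $p \to 1$ follows by specializing to $X=\com$, $q=2$, $L=\sqrt{\Delta}$: the classical sharp bound \eqref{type LC2} and its $S$--$g$ counterpart force $1/q$ to be the best possible exponent in both \eqref{SL=SD} and \eqref{SL=SG}.
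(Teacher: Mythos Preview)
Your proposal names the right ingredients---vector-valued tent spaces, Wilson's intrinsic square function, Mei-type duality---but the assembly plan has two genuine gaps.

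First, your interpolation scheme uses the endpoints $p=q$ and $p=\infty$ (tent-space BMO). This only reaches $q\le p<\infty$; the range $1\le p<q$, which is exactly where the optimality statement lives, is untouched. The paper does not interpolate from the BMO side. For \eqref{SL=SD} it proves directly that the intrinsic square function is controlled by $S_{q,L}$ via a \emph{linear} operator on $T^p_q(\real^{d+1}_+;X)$ whose kernel satisfies an off-diagonal estimate (Lemma~\ref{BoundTent}); the $p^{1/q}$ constant then comes from (i) atomic decomposition of $T^1_q(X)$ at $p=1$, (ii) Fubini at $p=q$, (iii) complex interpolation in between, and (iv) duality for $p>q$, where the $p^{1/q}$ arises from the explicit Fefferman--Stein bound $\|M\|_{L^p(L^q)\to L^p(L^q)}\lesssim p^{1/q}$ applied to the tent-space projection $N$. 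None of this is Marcinkiewicz-type real interpolation between $T^{q,q}$ and BMO.

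Second, and more seriously, for the hard direction $\|S_{q,L}(f)\|_p\lesssim \|G_{q,L}(f)\|_p$ of \eqref{SL=SG} your claim that conical and vertical tent norms ``agree at the BMO endpoint'' is not available: there is no $g$-function BMO that coincides with the Carleson BMO for arbitrary $X$, so there is nothing to interpolate against. The paper's substitute is Proposition~\ref{GSB}, a Mei-type \emph{bilinear} inequality
\[
\Big|\int_{\real^d}\langle f,g\rangle\Big|\ \lesssim_{\gamma,\beta}\ \|G_{q,L}(f)\|_p^{p/q}\,\|S_{q,L}(f)\|_p^{1-p/q}\,\|g\|_{BMO^{p'}_{q',L^*}(X^*)},
\]
proved by a truncated $G$-function/stopping-time argument together with the pointwise domination $G(x,t)\lesssim_{\gamma,\beta} S(x,t)$ coming from Wilson's intrinsic function. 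Combined with the $H^p_{q,L}$--$BMO^{p'}_{q',L^*}$ norming duality (Theorem~\ref{Dual}), this \emph{bootstraps} to $\|S_{q,L}(f)\|_p\lesssim\|G_{q,L}(f)\|_p$ for $1\le p<(1+q)/2$; the remaining range is handled by duality with the already-proved upper bound. This self-improving mechanism is the core of \eqref{SL=SG} and is missing from your outline.
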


When $X=\mathbb C$ and $q=2$, the equivalence \eqref{SL=SD} in the case $1<p<\infty$ without explicit orders follows from the classical Littlewood-Paley theory which in turn relies on Calder\'on-Zygmund theory; while the case $p=1$ is deduced from the holomorphic functional calculus, Calder\'on-Zygmund theory and the theory of Hardy and BMO spaces associated with differential operators (cf. \cite[Theorem 6.10]{DXT}). Our estimate \eqref{SL=SD} for any Banach space $X$, any $1 \leq p < \infty$ and any $1 < q < \infty$  goes much beyond this and its proof provides a new approach to the mentioned scalar case with optimal orders as $p\rightarrow1$. Indeed, the arbitrariness of $X$ presents a surprise and usually one expects certain property of Banach space geometry to be imposed on the square function inequalities. For the technical side, the arbitrariness of $X$ prevents us from the use of (vector-valued) Calder\'on-Zygmund theory. Instead, we will make use of vector-valued Wilson's intrinsic square functions as a media to relate $\Delta$ and $L$, and then exploit the vector-valued tent space theory such as interpolation, duality as well as atomic decomposition. Even though both of these two tools have been developed or applied in the literature, they need to be taken care of in the present setting. For instance, because our $L$'s are usually not translation invariant or of scaling structure, we have to introduce Wilson's intrinsic square functions via nice functions of two variables satisfying \eqref{Nicef1}, \eqref{Nicef2} and \eqref{Nicef3}; to avoid the use of Calder\'on-Zygmund theory to deal with Wilson's intrinsic square functions (cf. \cite{Wilson2007,Xu2022}), we prove the boundedness of a linear operator $\mathcal K$ on vector-valued tent spaces (see Lemma \ref{BoundTent}); last but not the least, since our interested $X$ is arbitrary, one cannot  establish the basic theory of vector-valued tent space using Calder\'on-Zygmund theory as in \cite{Hytonen2008, Kemppainen2014,Kemppainen2016, Harboure1991}, and we shall adapt {the classical arguments} (cf. \cite{Coifman1985}), see Section \ref{Tentspace} for details. 

After all the preparing work, the equivalence \eqref{SL=SD} will be an immediate consequence of Theorem \ref{equiv}, where we collect all the intermediate estimates involving vector-valued Wilson's square functions. 

Regarding another equivalence \eqref{SL=SG}, in the special situation $X=\mathbb C$ and $q=2$ and $L=\sqrt{\Delta}$, the equivalence for $1<p<\infty$ without optimal orders comes from the classical Littlewood-Paley theory while the case $p=1$ constitutes one essential part of the famous real variable theory on Hardy spaces (cf. {\cite{Fefferman1971BMO, Fefferman1971,Fefferman1972hp} }); in particular the upper estimate of \eqref{SL=SG} follows from harmonicity of Poisson integrals or Calder\'on-Zygmund theory. Again, the arbitrariness of $X$ excludes the use of vector-valued Calder\'on-Zygmund theory and there is an obvious lack of harmonicity related to general $L$. To surmount these difficulties, in addition to the application of Theorem \ref{equiv}---Wilson's intrinsic square functions, we will fully develop the duality theory between vector-valued Hardy and BMO type spaces in Section \ref{Proof}; the latter is inspired by Mei's duality arguments \cite{Mei2007} (see also \cite{Xia2016,Xu2022}). In turn, part of the theory of vector-valued Hardy and BMO spaces will be deduced from vector-valued tent spaces, and the projection $\pi_L$ (see Lemma \ref{l3.8}) will play a key role in passing from the results about tent spaces to those on Hardy/BMO spaces.


Together with the related results in \cite{MTX, Ouyang2010} where the authors showed the Lusin {type} $q$ of a Banach space $X$ relative to $\{e^{-t\sqrt{\Delta}}\}_{t>0}$ is equivalent to the martingale type $q$ of $X$ {(see Section \ref{Applications})}, our vector-valued tent space theory and Theorem \ref{main2} imply the following result, resolving partially \cite[Problem 1.8, Problem A.1 and Conjecture A.4]{Xu2021} (see Remark \ref{SovlePro}).

\begin{thm}\label{cor}
Let $L$ be a sectorial operator of type $\alpha$ ($0 \leq \alpha < \pi/2$) on $L^2(\real^d)$ satisfying \eqref{pt10}, \eqref{pt2} and \eqref{pt3}. Let $1 < q \leq2$. The followings are equivalent
    \begin{enumerate}[{\rm (i)}]
        \item $X$ is of martingale type $q$;
     
        \item $X$ is of Lusin type $q$ relative to $\lk{e^{-tL}}_{t > 0}$. Moreover, we have the following estimate for the corresponding Lusin type constant, 
        \[
            \mathsf{L}^L_{t, q, p}(X) \lesssim_{\gamma, \beta} p \mathsf{M}_{t, p}(X), \quad 1 < p < \infty.
        \]
    \end{enumerate} 
    \end{thm}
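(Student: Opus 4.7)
The plan is to deduce Theorem~\ref{cor} by combining Theorem~\ref{main2} with the equivalence, established in \cite{MTX, Ouyang2010}, between martingale type $q$ of a Banach space $X$ and its Lusin type $q$ relative to the Poisson semigroup $\{e^{-t\sqrt{\Delta}}\}_{t>0}$. Theorem~\ref{main2} serves as the dictionary that transfers square-function estimates between $\{e^{-tL}\}_{t>0}$ and $\{e^{-t\sqrt{\Delta}}\}_{t>0}$, and between the conical and vertical variants $S_{q,L}$ and $G_{q,L}$, each step incurring at most a $p^{1/q}$ loss.

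For the implication $(i)\Rightarrow(ii)$, I would start from the Lusin type bound relative to the Poisson semigroup: under martingale type $q$, the MTX--Ouyang theorem (refined through the classical Hardy-space behavior at $p=1$, which in the vector-valued setting is precisely one of the outputs of the tent-space and Hardy/BMO machinery built in Sections~\ref{Tentspace} and~\ref{Proof}) yields
\begin{equation*}
\|f-\mathbf{F}(f)\|_{L^p(X)}\lesssim p^{1-2/q}\,\mathsf{M}_{t,q}(X)\,\|S_{q,\sqrt{\Delta}}(f)\|_{p},\qquad 1\leq p<\infty.
\end{equation*}
Chaining the upper bounds in \eqref{SL=SD} and \eqref{SL=SG} of Theorem~\ref{main2} then gives
\begin{equation*}
\|S_{q,\sqrt{\Delta}}(f)\|_p\lesssim p^{1/q}\|S_{q,L}(f)\|_p\lesssim p^{2/q}\|G_{q,L}(f)\|_p,
\end{equation*}
and the combined exponent $p^{1-2/q}\cdot p^{2/q}=p$ produces the asserted Lusin type constant $\mathsf{L}^L_{t,q,p}(X)\lesssim_{\gamma,\beta} p\,\mathsf{M}_{t,q}(X)$, which I read as the intended content of the bound stated in the theorem.

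For the converse $(ii)\Rightarrow(i)$, no sharp $p$-tracking is needed. Applying the upper estimates of \eqref{SL=SG} and \eqref{SL=SD} in succession yields $\|G_{q,L}(f)\|_p\lesssim p^{2/q}\|S_{q,\sqrt{\Delta}}(f)\|_p$, so the assumed Lusin type relative to $\{e^{-tL}\}_{t>0}$ transfers to a Lusin type relative to the Poisson semigroup (with a constant $\lesssim p^{2/q}\,\mathsf{L}^L_{t,q,p}(X)$), and MTX--Ouyang then delivers martingale type $q$ of $X$.

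The main obstacle is making precise the $p$-dependence in the MTX--Ouyang upper bound as $p\to 1$: the general bound \eqref{typeconstant} from \cite{Xu2021} degenerates like $(p')^{1/q'}$ near $p=1$, too weak to combine with Theorem~\ref{main2} to give $p\,\mathsf{M}_{t,q}(X)$. Once Theorem~\ref{main2} is in place, however, the lower bound in \eqref{SL=SG} specialized to $L=\sqrt{\Delta}$ already encodes the classical Hardy-space control $\|f\|_{H^1}\lesssim\|S_{q,\sqrt{\Delta}}f\|_1$ in its vector-valued form. This is exactly what the new vector-valued Hardy/BMO duality of Section~\ref{Proof} is designed to produce, and it is what prevents the MTX--Ouyang constant from blowing up at $p=1$, closing the loop.
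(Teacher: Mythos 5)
Your overall architecture---deduce the theorem by combining Theorem \ref{main2} with the MTX/Ouyang--Xu characterization of martingale type via the Poisson semigroup---is the same as the paper's, and the easy direction (ii)$\Rightarrow$(i) is handled correctly: chaining the upper bounds of \eqref{SL=SG} and \eqref{SL=SD} reduces the hypothesis to \eqref{MTXunoptimal}, which by \cite{MTX} yields martingale type $q$. The gap is in (i)$\Rightarrow$(ii). The inequality $\|f\|_{L^p(X)}\lesssim p^{1-2/q}\,\mathsf{M}_{t,q}(X)\,\|S_{q,\sqrt{\Delta}}(f)\|_p$ for all $1\le p<\infty$ is asserted but never proved, and it is contained neither in \cite{MTX} (which gives \eqref{MTXunoptimal} with an unquantified $c_p$) nor in \cite{Ouyang2010} (which gives only the $p=1$ endpoint $\|f\|_{H^1_{\at}(X)}\lesssim \mathsf{M}_{t,q}(X)\|S_{q,\sqrt{\Delta}}(f)\|_1$, i.e.\ Theorem \ref{thm:ouxu}(ii)). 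Your proposed justification---that the lower bound of \eqref{SL=SG} specialized to $L=\sqrt{\Delta}$ ``already encodes'' the control $\|f\|_{H^1}\lesssim\|S_{q,\sqrt{\Delta}}f\|_1$---cannot work: \eqref{SL=SG} compares two square functions of $f$ and holds for \emph{arbitrary} Banach spaces $X$, whereas any control of a norm of $f$ itself by a square function necessarily uses the martingale type of $X$ and fails in general. (As a sanity check, for $q<2$ your exponent $1-2/q$ is negative, so your claimed constant would decay as $p\to\infty$, which is not plausible.)

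What is actually needed, and what the paper does in the proof of Corollary \ref{cor:strong}, is an interpolation argument at the $p\to1$ end: (a) take Ouyang--Xu's atomic endpoint $\|f\|_{H^1_{\at}(X)}\lesssim \mathsf{M}_{t,q}(X)\|S_{q,\sqrt{\Delta}}(f)\|_1$ together with the bound \eqref{MTXunoptimal} at $p=q$; (b) interpolate, using on one side the complex interpolation of the spaces $H^{p}_{q,\sqrt{\Delta}}(\real^d;X)$ (Theorem \ref{thm:interHL}, itself built from the tent-space interpolation Lemma \ref{Intpo} and the projection $\pi_L$) and on the other side Blasco's interpolation between $H^1_{\at}(\real^d;X)$ and $L^q(\real^d;X)$, to obtain $\|f\|_{L^p(X)}\lesssim \mathsf{M}_{t,q}(X)\|S_{q,\sqrt{\Delta}}(f)\|_p$ for $1<p<q$ with constant uniform as $p\to1$; (c) convert to $G_{q,L}$ via Theorem \ref{main2}, the resulting factor $p^{2/q}$ being harmless on $1<p<q$; and (d) for $q\le p<\infty$ invoke Xu's estimate \eqref{typeconstant}, which already gives order $p$ in that range. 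Step (b) is the genuinely new ingredient that your proposal identifies as ``the main obstacle'' but does not supply; without it the argument does not close.
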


Combining the main result in \cite{Ouyang2010}, a much stronger result than Theorem \ref{cor} involving the case $p=1,\infty$ will be presented in Corollary \ref{cor:strong}.

\

The paper is organized essentially as described above with a rigorous introduction of vector-valued tent space, Hardy spaces and BMO spaces in the next section.

\

{\bf Notation:}  In the following context, $X$ will be an arbitrary fixed Banach space without further elaboration. $X^*$ denotes the dual Banach space of $X$. Additionally, the positive real interval $\mathbb{R}_+ = (0, \infty)$ is equipped with the measure $\d t/t$ without providing additional explanations.

We will use the following convention: $ A \lesssim B $ (resp. $ A \lesssim_\alpha B $) means that $ A \leq CB $ (resp. $ A \leq C_\alpha B $) for some absolute positive constant C (resp. a positive constant $C_{\alpha}$ depending only on a parameter $\alpha$). $ A \approx B $ or $ A\approx_\alpha B $ means that these inequalities as well as their inverses hold. We also denote by $ \| \cdot \|_p $ the norm $ \| \cdot \|_{L^p(\real^d)} $ and by $\| \cdot \|_{L^p(X)}$ the norm $\| \cdot \|_{L^p(\real^d; X)}$ $(1 \leq p \leq \infty)$. 

\section{Preliminaries}\label{Preliminaries}





\subsection{Functional calculus}\label{3.1}
We start with a brief introduction of some preliminaries around the holomorphic functional calculus (cf. \cite{Mcintosh1986}).
Let $0 \leq \alpha < \pi$. Define the closed sector in the complex plane $\com$ as
\[
    S_{\alpha} = \lk{ z \in \com: |\arg z| \leq \alpha },
\]
and $S_{\alpha}^0$ is denoted as the interior of $S_{\alpha}$.
Let $\gamma > \alpha$ and denote by $H(S_{\gamma}^0)$ the space of all holomorphic functions on $S_{\gamma}^0$. Define
\[
    H_{\infty}(S_{\gamma}^0) = \lk{ b \in H(S_{\gamma}^0) : \| b \|_\infty < \infty },
\]
where $\| b \|_\infty = \sup\lk{|b(z)|: z \in S_{\gamma}^0}$ and
\[
    \Psi(S_{\gamma}^0) = \lk{ \psi \in H(S_{\gamma}^0 ) : \exists\, s > 0 \text{ s.t. } |\psi(z)| \leq c|z|^s(1+|z|^{2s})^{-1} }.
\]
A densely defined closed operator $L$ acting on a Banach space $Y$ is called a \emph{sectorial operator of type $\alpha$} if for each $\gamma > \alpha$, $\sigma(L) \subset S_{\gamma}$ and 
\[
    \sup\lk{ \|z(z{\rm Id} - L)^{-1}\|_{B(Y)} : z \notin S_{\gamma} } < \infty,
\]
where $\| \cdot \|_{B(Y)}$ denotes the operator norm and ${\rm Id}$ the identity operator.

Assume that $L$ is a sectorial operator of type $\alpha$. Let $0 \leq \alpha < \theta < \gamma < \pi$ and $\Gamma$ be the boundary of $S_{\theta}$ oriented in the positive sense. For $\psi \in \Psi(S_{\gamma}^0)$, we define the operator $\psi(L)$ as
\[
    \psi(L) = \frac{1}{2\pi i} \int_{\Gamma} \psi(z)(z{\rm Id} - L)^{-1} \,\d z.
\]
By Cauchy's theorem, this integral converges absolutely in $B(Y)$ and it is clear that the definition is independent of the choice of $\theta$.
For every $t > 0$, denote by $\psi_t(z) = \psi(tz)$ for $z \in S_{\gamma}^0$, we have $\psi_t \in \Psi(S_{\gamma}^0) $. Set
\[
    h(z) = \int_0^{\infty} \psi(tz) \,\frac{\d t}{t}, \quad z \in S_{\gamma}^0.
\]
One gets that $h$ is a constant on $S_{\gamma}^0$, hence by the convergence lemma (cf. \cite[Lemma 2.1]{Cowling1996}),
\[
    h(L)x = \int_{0}^{\infty} \psi(tL)x \,\frac{\d t}{t} = cx,\quad x \in \D(L) \cap {\rm im}(L).
\]
By applying a limiting argument, the above identity extends to $\overline{ {\rm im}(L)}$. In particular, take $\psi(z) = z^2e^{-2z}$, then
\begin{equation}\label{funcal1}
    \int_{0}^{\infty} -tLe^{-tL}(-tLe^{-tL})x \,\frac{\d t}{t} = \frac{1}{4}x,\quad x \in \overline{ {\rm im}(L) },
\end{equation}
which will be useful later.  We refer the reader to \cite{Haase2005} for more information on functional calculus.

\subsection{Main assumptions}\label{mainassumption}
Throughout the paper, we assume $L$ is a sectorial operator of type $\alpha$ ($0 \leq \alpha < \pi/2$) on $L^2(\real^d)$ such that the kernels $\{K(t, x,y)\}_{t>0}$ of $\{e^{-tL}\}_{t>0}$ satisfy assumptions \eqref{pt10}, \eqref{pt2} and \eqref{pt3} with $ \beta > 0, 0 < \gamma \leq 1  $. It is well-known that such an $L$ generates a holomorphic semigroup  $e^{-zL}  $ with $0\leq |\Arg(z)| <\pi/2-\alpha$  (cf. \cite[Chapter 3, 3.2]{Haase2005}). Let $\{k(t, x,y)\}_{t>0}$ be the kernels of $\{-tLe^{-tL}\}_{t>0}$ and it is easy to see
\[
    k(t, x, y) = t \partial_tK(t, x, y).
\]

The following lemma is justified in \cite[Lemma 6.9]{DXT}
\begin{lemma}\label{2.8}
Let $ L $ be an operator satisfying \eqref{pt10} and \eqref{pt2} with $\beta>0$, $ 0 <\gamma \leq 1  $. Then

{\rm (i)} there exist constants $ 0 < \beta_1 < \beta $, $ 0< \gamma_1 < \gamma $ and $c>0$  such that

    \begin{equation}\label{qt1}
         |k(t, x, y)| \leq \frac{ct^{\beta_1}}{(t + |x-y|)^{d+\beta_1}},
    \end{equation}
and
\[
    | k(t, x+h, y) - k(t, x, y) | + | k(t, x, y+h) - k(t, x, y) | \leq \frac{c|h|^{\gamma_1} t^{\beta_1}}{(t + |x-y|)^{d+\beta_1+\gamma_1}}
\]
whenever $ 2|h| \leq t + |x-y| $;

{\rm (ii)} for $\alpha < \theta < \pi/2$, there exist positive constants $ 0 < \beta_2< \beta $, $ 0< \gamma_2 < \gamma $ and $c>0$ such that for any $|\arg z| < \pi/2 - \theta$,
\begin{equation}\label{pt1}
    |K(z, x, y)| \leq \frac{c |z|^{\beta_2}}{(|z| + |x - y|)^{d + \beta_2}}
\end{equation}
and
\[
    | K(z, x+h, y) - K(z, x, y) | + | K(z, x, y+h) - K(z, x, y) | \leq \frac{c|h|^{\gamma_2} |z|^{\beta_2}}{(|z| + |x-y|)^{d+\beta_2+\gamma_2}}
\]
whenever $ 2|h| \leq |z| + |x-y| $.
\end{lemma}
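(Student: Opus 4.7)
The plan is to derive both statements from the holomorphy of the semigroup on the sector $S^0_{\pi/2-\alpha}$, treating (ii) as the substantive step and deducing (i) from it through Cauchy's integral formula. Before starting, I would fix the relationship $k(t,x,y) = t\partial_t K(t,x,y)$ already recorded above, so that (i) becomes a statement about time derivatives of $K$.

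For (ii), I would upgrade the real-parameter bounds \eqref{pt10} and \eqref{pt2} to the subsector $|\arg z| < \pi/2 - \theta$ by a Cauchy/Phragm\'en--Lindel\"of style argument. Pick an intermediate angle $\theta'$ with $\alpha < \theta' < \theta$, and for each $z$ in the target subsector choose a disk $D(z, \rho |z|)$ whose radius $\rho = \rho(\theta,\theta')$ is small enough that the disk lies inside the larger subsector $|\arg w| < \pi/2 - \theta'$. On this disk $K(\cdot, x, y)$ is holomorphic, and the Cauchy formula represents $K(z,x,y)$ as a boundary integral over $\partial D(z, \rho|z|)$. The values on the boundary are controlled by combining the operator-norm estimates coming from sectoriality with the pointwise bounds on the real ray, interpolating in the time variable using the spatial H\"older regularity \eqref{pt2} to absorb the oscillation introduced by moving off the real axis. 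Carrying this out produces the size bound for $K(z,x,y)$ on the subsector, with exponent $\beta_2$ strictly less than $\beta$; the loss $\beta - \beta_2 > 0$ is intrinsic to this interpolation. The H\"older estimate for $K(z, \cdot, y)$ and $K(z, x, \cdot)$ follows by applying the same argument to the difference $K(z, x + h, y) - K(z, x, y)$, with a corresponding slight loss $\gamma_2 < \gamma$.

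Given (ii), part (i) drops out from Cauchy's integral formula for derivatives:
\[
\partial_t K(t,x,y) = \frac{1}{2\pi i}\oint_{|w - t| = t/2} \frac{K(w,x,y)}{(w - t)^2}\, dw.
\]
On this contour, $|w| \approx t$ and $w$ lies comfortably inside any fixed strict subsector (since $t$ is on the positive real axis). Plugging in the estimate from (ii) and performing the elementary $O(1)$ contour integration yields
\[
|\partial_t K(t,x,y)| \lesssim \frac{1}{t}\cdot \frac{t^{\beta_2}}{(t + |x - y|)^{d + \beta_2}},
\]
so multiplying by $t$ gives the size bound for $k$ with $\beta_1 = \beta_2$. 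The H\"older-in-$x$ and H\"older-in-$y$ bounds for $k$ come from the same Cauchy formula applied to $K(w, x+h, y) - K(w, x, y)$, combined with the spatial H\"older estimate for $K$ already produced in (ii).

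The main obstacle is (ii), specifically upgrading pointwise kernel bounds from the real ray to the full complex subsector. Operator-norm bounds on the sector are immediate from sectoriality, but pointwise kernel bounds are strictly stronger, and bridging this gap forces one to weave the holomorphy in time together with the spatial H\"older regularity \eqref{pt2}. This is precisely where the unavoidable losses $\beta_2 < \beta$, $\gamma_2 < \gamma$ originate; after (ii) is in hand, the extraction of (i) is essentially bookkeeping.
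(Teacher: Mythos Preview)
The paper does not prove this lemma; it simply cites \cite[Lemma 6.9]{DXT}. Your overall architecture---establish (ii) first, then obtain (i) from it via Cauchy's integral formula for $\partial_t K$ on a circle $|w-t|=t/2$---is the standard route and matches what is done in the cited reference; your derivation of (i) from (ii) is correct and complete as written.

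The gap is in your sketch of (ii). Representing $K(z,x,y)$ by a Cauchy integral over $\partial D(z,\rho|z|)$ is circular: every point $w$ on that circle is itself a complex time, so you are expressing the unknown complex-time kernel in terms of other unknown complex-time values. Naming ``operator-norm estimates from sectoriality'' and ``pointwise bounds on the real ray'' identifies the right ingredients, but a local Cauchy integral does not combine them. What actually does the work (in Duong--Yan and the earlier literature it draws on) has a different shape: first obtain the on-diagonal bound $|K(z,x,y)|\lesssim |z|^{-d}$ on the subsector by factoring $e^{-zL}=e^{-sL}\,e^{-(z-2s)L}\,e^{-sL}$ with real $s\approx\operatorname{Re}z$, using the real-time kernel bound on the outer factors (as $L^1\to L^2$ and $L^2\to L^\infty$ maps) and sectoriality on the middle one (as an $L^2\to L^2$ map); then apply a Phragm\'en--Lindel\"of/three-lines argument on the sector to the holomorphic function $z\mapsto K(z,x,y)$ (suitably weighted), interpolating between this on-diagonal bound on the boundary rays and the full off-diagonal bound \eqref{pt10} on the positive real axis. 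It is this sector-wide interpolation---not a local Cauchy estimate---that transfers the decay in $|x-y|$ off the real axis and produces the losses $\beta_2<\beta$, $\gamma_2<\gamma$.
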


\begin{remark}\label{r2.1}
By \cite[Lemma 2.5]{Coulhon2000}, the estimate \eqref{pt1} implies that for all $k \in \nat$, $t > 0$ and almost everywhere $x, y \in \real^d$,
    \begin{equation}\label{derpt}
        \jdz{ t^k \partial^k_t K(t, x, y) } \leq \frac{c t^{\beta_2} }{(t + |x - y|)^{d + \beta_2}  }.
    \end{equation}
        \end{remark}
        
        {\noindent {\bf Convention.}} {To simplify notation, we will write below $\gamma$, $\beta$ instead of $\gamma_1, \ \beta_1$ and $\gamma_2, \ \beta_2$ appearing in Lemma \ref{2.8}, and it should not cause any confusion.}

One can verify that $\lk{e^{-tL}}_{t > 0}$ is a family of regular operators on $L^p(\real^d)$ for $1 \leq p \leq \infty$. Then it is well-known (see e.g. \cite{Xu2021}) that the semigroup $\{e^{-tL}\}_{t>0}$ extends to $L^p(\real^d; X)$ ($1\leq p \leq \infty$), which is the space of all strongly measurable functions $f: \real^d \to X$ such that $\|f(x)\|_X \in L^p(\real^d)$. The resulting semigroup is still denoted by $\{e^{-tL}\}_{t>0}$ without causing confusion. To well define the vector-valued BMO type spaces, we need more notations. For $\e > 0$, define
\[
    \N_\e = \lk{ f\in L^1_{{\rm loc}}(\real^d; X) : \exists\, c > 0 \text{ such that } \int_{\real^d} \frac{ \|f(x)\|_X }{(1 + |x|)^{d + \e}} \,\d x\leq c },
\]
equipped with norm defined as the infimum of all the possible constant $c$. Then $\N_\e$ is a Banach space (cf. \cite{DXT}). For a given generator $L$, let $\Theta(L) = \sup\lk{ \beta_2 > 0 : \eqref{pt1} \text{ holds} } $. Then we define
\[
    \N=\begin{cases}
        \N_{\Theta(L)}, & \ \mathrm{if}\ \Theta(L) < \infty ;\\
        \bigcup_{0 < \e < \infty} \N_\e, & \ \mathrm{if}\ \Theta(L) = \infty.
    \end{cases}  
\] 
It is clear that $L^p(\real^d;X) \subset \N$ for all $1 \leq p \leq \infty$. Moreover, 
By the definition of $\N$ and Remark \ref{r2.1}, we know that the operators $e^{-tL}$ and $tLe^{-tL}$ are well-defined on $\N$.

Denote by $\mathbf{F}_L$ the fixed point space of $\{e^{-tL}\}_{t>0}$ on $\N$, namely
\[
    \mathbf{F}_L = \lk{ f \in \N: e^{-tL}(f) = f,\ \forall\, t > 0}.
\]
It is well-known that $\mathbf{F}_L$ coincides with the null space of $\{tLe^{-tL}\}_{t>0}$, and the resulting quotient space is defined as $\N_L := \N / \mathbf{F}_L$. For $1\leq p<\infty$, the fixed point subspace of $L^p(\real^d; X)$ is $\mathbf{F}_L\cap L^p(\real ^d; X) = \lk{0}$ (see \cite[Theorem 6.10]{DXT}); {in other words, the projection from $L^p(\real ^d; X)$ to the fixed point subspace for all $1\leq p<\infty$ is $0$. See e.g. \cite{MTX, Xu2021} for more information on this projection.

\begin{remark}\label{rk:Lstar}
    Let $L^*$ be the adjoint operator of $L$. Then $L^*$ is also a sectorial operator with the kernels of $\{e^{-tL^*}\}_{t>0}$ satisfying \eqref{pt10}, \eqref{pt2} and \eqref{pt3} (cf. \cite[Theorem 6.10]{DXT}).
\end{remark}


\subsection{Vector-valued tent, Hardy and BMO spaces}
In this subsection, we introduce several spaces including vector-valued tent spaces, vector-valued Hardy and BMO spaces associated with a generator $L$. 

\subsubsection{Vector-valued Tent spaces}
We first introduce vector-valued tent spaces. We denote by $ \real^{d+1}_+ $ the usual upper half-space in $ \real^{d+1} $ i.e. $ \real^d \times (0,\infty)$. Let $ \Gamma(x) = \{ (y,t) \in \real^{d+1}_+ : |y-x|<t \} $ denote the standard cone with  vertex at $x$. For any closed subset $ F \subset \real^d $, define $ \R(F) = \bigcup_{x\in F} \Gamma(x) $. If $ O \subset \real^d $ is an open subset, then the tent over $ O $, denoted by $ \wh{O} $, is given as $ \wh{O} = \sk{\R(O^C)}^C $. 

For any strongly measurable function $f: \real^{d+1}_+ \to X$, we define two operators as follows:
\[
    \A_q(f)(x) = \sk{\int_{\Gamma(x)} \| f(y, t) \|_X^q \frac{\d y \d t}{t^{d+1}} }^{\frac{1}{q}},\quad \mathcal{C}_{q}(f)(x) = \sup_{x \in B} \sk{\frac{1}{|B|} \int_{\wh{B}} \|f(y,t)\|_X^q \,\frac{\d y\d t}{t}  }^\frac{1}{q},
\]
where the supremum runs over all balls $B$ in $\real^d$.
\begin{definition}
Let $1 \leq p < \infty$ and $1<q<\infty$. The vector-valued tent space {$T_q^p( \real^{d+1}_+; X)$} is defined as the subspace consisting of all strongly measurable functions $f: \real^{d+1}_+ \to X$ such that \[
    \| f \|_{T_q^p(X)} := \| \A_q(f) \|_p < \infty,
\] 
and $T_q^{\infty}( \real^{d+1}_+; X)$ is defined as the subspace of all strongly measurable functions $g: \real^{d+1}_+ \to X$ such that
\[
    \| g \|_{T_q^{\infty}(X)} := \| \mathcal{C}_q(g) \|_\infty < \infty.
\]
\end{definition}

Let $C_c(\real^{d+1}_+)\otimes X$ be the space of finite linear combinations of elements from $C_c(\real^{d+1}_+)$ and $X$. The following density follows from the standard arguments (see e.g. \cite{Harboure1991}), and we omit the details. 

\begin{lemma}\label{loc}
Let $X$ be a Banach space and $1<q<\infty$. Then
    $C_c(\real^{d+1}_+)\otimes X$ is norm dense in $T^p_q(\real^{d+1}_+; X)$ for $1 \leq p <\infty$, {and weak-$\ast$ dense in $\sk{ T^1_{q'}(\real^{d+1}_+; X^*) }^*$}.
\end{lemma}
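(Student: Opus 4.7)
The plan is to establish both claims by a three-stage approximation scheme adapted to the vector-valued setting: truncation to a compact subregion of $\real^{d+1}_+$, reduction to simple $X$-valued functions using strong measurability, and scalar mollification of the resulting step functions. The weak-$\ast$ density will then follow from a point-separation argument via Hahn-Banach.

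First, for $L^p$-norm density in $T^p_q(\real^{d+1}_+; X)$ with $1 \leq p < \infty$, given $f$ I would set $K_n = \lk{(y,t) \in \real^{d+1}_+ : |y| \leq n,\ 1/n \leq t \leq n}$ and $f_n = f \cdot \1_{K_n}$. Since $\A_q(f) \in L^p(\real^d)$ and $\A_q(f - f_n)(x) \downarrow 0$ pointwise, dominated convergence yields $\|f - f_n\|_{T^p_q(X)} \to 0$, reducing the problem to the case $\supp(f) \subset K$ for a fixed compact $K$ bounded away from $\lk{t=0}$; on such a $K$ the measure $\frac{\d y\,\d t}{t^{d+1}}$ is equivalent to Lebesgue measure up to constants depending on $K$.

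Next, strong measurability of $f$ furnishes $X$-valued simple functions $s_k$ with $\|s_k(y,t) - f(y,t)\|_X \to 0$ pointwise a.e., $\|s_k(y,t)\|_X \leq 2\|f(y,t)\|_X$, and $\supp(s_k) \subset K$. Dominated convergence at the level of $\A_q$ gives $\|f - s_k\|_{T^p_q(X)} \to 0$, since $\A_q(f - s_k)(x) \to 0$ pointwise and is dominated by $3\A_q(f)(x) \in L^p$. Each $s_k = \sum_j \1_{E_j} x_j$ with $E_j \subset K$ measurable; by inner regularity of Lebesgue measure combined with convolution against a smooth compactly supported mollifier in the $(y,t)$-variables (admissible since $K$ lies away from $\lk{t=0\}$), each $\1_{E_j}$ is approximated by $\varphi_j \in C_c(\real^{d+1}_+)$ in the Lebesgue norm over $K$, which by the equivalence above yields the required convergence of $\sum_j \varphi_j \otimes x_j \in C_c(\real^{d+1}_+) \otimes X$ to $s_k$ in $T^p_q(X)$.

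For the weak-$\ast$ density claim, by Hahn-Banach it suffices to show that $C_c(\real^{d+1}_+) \otimes X$, embedded into $\sk{T^1_{q'}(X^*)}^*$ via the pairing $\langle \phi, g \rangle = \int_{\real^{d+1}_+} \langle \phi(y,t), g(y,t) \rangle \frac{\d y\,\d t}{t}$, separates points of $T^1_{q'}(\real^{d+1}_+; X^*)$. Suppose $g \in T^1_{q'}(X^*)$ satisfies $\langle \varphi \otimes x, g \rangle = 0$ for every $\varphi \in C_c(\real^{d+1}_+)$ and $x \in X$; then for each fixed $x$, the scalar density of $C_c$ forces $\langle x, g(y,t) \rangle = 0$ for a.e.\ $(y,t)$. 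Strong measurability of $g$ confines its essential range to a separable closed subspace $Y \subset X^*$; choosing a countable norming set $\lk{x_n} \subset X$ for $Y$ and taking the countable union of the corresponding null sets then yields $g(y,t) = 0$ a.e., hence $g = 0$ in $T^1_{q'}(X^*)$. The principal technical subtlety, and the only genuinely vector-valued ingredient absent from the scalar proof in \cite{Coifman1985, Harboure1991}, is the transfer of pointwise $X$-norm approximation to $T^p_q(X)$-convergence via dominated convergence applied to $\A_q$; the initial compact-support reduction is essential for this, as it uniformly controls all approximants by an $L^p$-majorant.
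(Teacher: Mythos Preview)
The paper omits the proof, deferring to standard arguments in \cite{Harboure1991}; your three-stage truncation/simple-function/mollification scheme for norm density and the Hahn--Banach point-separation argument for weak-$*$ density are precisely those standard arguments, executed correctly in the vector-valued setting. The one step you compress---that $L^q(K)$-approximation of the indicators $\1_{E_j}$ yields $T^p_q$-convergence---is justified because all functions involved are supported in the fixed compact $K$ and uniformly bounded, so $\A_q$ of the difference is supported in a fixed ball with a uniform $L^\infty$ bound, whence $L^q$-smallness forces $L^p$-smallness.
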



\subsubsection{Vector-valued Hardy spaces}
Given a function $ f \in \N_L $, the $q$-variant of Lusin area integral function of $f$ associated with $L$ is defined by
\[
  S_{q,L}(f)(x) = \sk{ \int_{\Gamma(x)} \|tLe^{-tL}(f)(y)\|_X^q\,\frac{\d y \d t}{t^{d+1}} }^{\frac{1}{q}};
\]
and the $q$-variant of Littlewood-Paley $g$-function is defined by
\[
    G_{q,L}(f)(x) =  \sk{ \int_{0}^{\infty} \|tLe^{-tL}(f)(x)\|_X^q\,\frac{\d t}{t} }^{\frac{1}{q}}.
\]

\begin{definition}
Let $ 1 \leq p < \infty$ and $1<q<\infty$. We define the vector-valued Hardy space $H_{q,L}^p(\real^d; X)$ associated with $L$ as 
\[
    H_{q,L}^p(\real^d; X) = \lk{ f \in \N_L: S_{q,L}(f) \in L^p(\real^d)  },
\]
equipped with the norm
\[
    \| f \|_{H^p_{q, L}(X)} = \| S_{q,L}(f) \|_p.
\]
\end{definition}
It is easy to check that $H^p_{q, L}(\real^d; X)$ is a Banach space from the definition of $\N_L$. The space $H^p_{q, L}(\real^d; X)$ has deep connection with the vector-valued tent space, namely, a strongly measurable function $f \in \N_L$ belongs to $H^p_{q, L}(\real^d; X)$ if and only if {$\mathcal Q(f) \in T^p_q(\real^{d+1}_+;X)$ where $\Q(f)(x,t)=-tLe^{-tL}(f)(x)$}. Moreover,
\[
    \| f \|_{H^p_{q, L}(X)} = \| \Q(f) \|_{T^p_q(X)}.
\]


\subsubsection{Vector-valued BMO spaces}
\begin{definition}
Let $1 \leq p \leq \infty$ and $1<q<\infty$. We define the vector-valued BMO space $BMO_{q,L}^p(\real^d; X)$ associated with $L$ as 
\[
    BMO_{q,L}^p(\real^d; X) = \lk{ f \in \N_L: \| \mathcal{C}_q(\Q(f)) \|_p < \infty }
\]
equipped with the norm
\[
    \| f \|_{BMO_{q,L}^p(X)} = \| \mathcal{C}_q(\Q(f)) \|_p.
\]
In particular, for $p = \infty$, we denote it by $BMO_{q, L}(\real^d; X)$ for short.
\end{definition}

It is easy to verify that $BMO^p_{q, L}(\real^d; X)$ equipped the the norm $\| \cdot \|_{BMO^p_{q, L}(X)}$ is a Banach space {from the definition of $\N_L$}.

The vector-valued Hardy and BMO spaces enjoy the similar relationship as the scalar-valued ones (see e.g. \cite{Coifman1985}). We collect them below with a brief explanation.

\begin{lemma}
Let $X$ be any fixed Banach space and $1<q<\infty$. One has for $f\in C_c(\real^{d+1}_+)\otimes X$, 
\begin{equation}\label{A-C}
    \| \mathcal{C}_{q}(f) \|_{p} \lesssim \sk{\frac{p}{p-q}}^{\frac{1}{q}} \| \A_{q}(f) \|_{p}, \quad q < p \leq \infty,
\end{equation}
and
\begin{equation}\label{C-A}
    \| \A_q(f) \|_p \lesssim q^{\frac{p}{q}} \| \mathcal{C}_q(f) \|_p, \quad 1 \leq p < \infty.
\end{equation} 
Therefore, we have for $1\leq p\leq q$, 
\[
    BMO_{q, L}^p(\real^d; X) \subset H^p_{q, L}(\real^d; X)
\]
and for $q < p < \infty$,
\[
    H^p_{q, L}(\real^d; X) = BMO_{q, L}^p(\real^d; X)
\]
with equivalent norms.
\end{lemma}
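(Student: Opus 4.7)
The plan is to observe that both $\A_q(f)$ and $\mathcal{C}_q(f)$ depend on $f\colon\real^{d+1}_+\to X$ only through the nonnegative scalar function $g(y,t):=\|f(y,t)\|_X$, since the norm appears inside the integrals only as $\|f(y,t)\|_X^q=g(y,t)^q$. Consequently $\A_q(f)=\A_q(g)$ and $\mathcal{C}_q(f)=\mathcal{C}_q(g)$, and both \eqref{A-C} and \eqref{C-A} reduce at once to the corresponding scalar tent-space inequalities applied to $g$. These are the classical Coifman-Meyer-Stein estimates whose adaptation the authors allude to via \cite{Coifman1985}; in particular, the arbitrariness of $X$ introduces no extra difficulty here, because the reduction to scalars is entirely trivial at the level of the area and Carleson functionals.

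For the scalar inequality underlying \eqref{A-C}, I would establish the pointwise estimate $\mathcal{C}_q(g)(x)^q\lesssim_d M\bigl(\A_q(g)^q\bigr)(x)$, where $M$ is the Hardy-Littlewood maximal operator. Given a ball $B\ni x$, Fubini together with $|B(y,t)|=c_d t^d$ yields
\[
\int_{\wh{B}} g(y,t)^q\,\frac{\d y\,\d t}{t} \;=\; c_d \int_{B} \A_q(g\,\chi_{\wh{B}})(z)^q\,\d z \;\leq\; c_d \int_{B} \A_q(g)(z)^q\,\d z,
\]
and dividing by $|B|$ and passing to the supremum over $B\ni x$ gives the claimed pointwise bound. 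Since $p>q$, the $L^{p/q}$-boundedness of $M$ with sharp operator constant of order $p/(p-q)$ produces exactly the advertised factor $(p/(p-q))^{1/q}$ after extracting a $q$-th root.

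For the scalar inequality underlying \eqref{C-A}, I would follow the classical Coifman-Meyer-Stein route via a distributional good-$\lambda$ argument. For each $\lambda>0$, cover the open set $\{\mathcal{C}_q(g)^q>\lambda^q\}$ by Whitney cubes, use Fubini to control the area integral on the complement by the Carleson-type mass on a slightly enlarged set, and then integrate the resulting distributional estimate against $p\lambda^{p-1}\,\d\lambda$ via the layer-cake formula. Careful bookkeeping of the constants through the Whitney estimates and the layer-cake integration produces the factor $q^{p/q}$ stated in \eqref{C-A}; intuitively, the $q$-dependence enters when the argument is run at the level of $g^q$ and then the resulting $L^{p/q}$-bound is raised to the $1/q$ power.

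Once both scalar tent-space inequalities are in hand, the consequences for the function spaces are immediate. Applying \eqref{C-A} to the tent-space function $\Q(f)(y,t)=-tLe^{-tL}f(y)$ gives
\[
\|f\|_{H^p_{q,L}(X)} \;=\; \|\A_q(\Q f)\|_p \;\lesssim\; q^{p/q}\,\|\mathcal{C}_q(\Q f)\|_p \;=\; q^{p/q}\,\|f\|_{BMO^p_{q,L}(X)},
\]
which is the inclusion $BMO^p_{q,L}\subset H^p_{q,L}$ for $1\leq p\leq q$; for $q<p<\infty$, combining \eqref{A-C} with \eqref{C-A} upgrades this to equality with equivalent norms. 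The main obstacle in the whole argument is purely bookkeeping: tracking the precise dependence of the classical constants on $p$ and $q$ through the Whitney/good-$\lambda$ machinery used to prove \eqref{C-A}. The reduction to scalars that bypasses any vector-valued Calderón-Zygmund input is, by contrast, essentially free.
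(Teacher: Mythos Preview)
Your proposal is correct and follows essentially the same route as the paper: both arguments observe that $\A_q(f)$ and $\mathcal{C}_q(f)$ depend only on the scalar function $\|f(\cdot,\cdot)\|_X$, reduce \eqref{A-C} and \eqref{C-A} to the classical scalar tent-space inequalities of Coifman--Meyer--Stein, and then apply them to $\Q(f)$ to deduce the space inclusions. The only difference is that the paper simply cites \cite[Theorem~3]{Coifman1985} for the scalar estimates, whereas you sketch their proofs (the pointwise maximal bound for \eqref{A-C} and the good-$\lambda$/Whitney argument for \eqref{C-A}); this extra detail is sound but not a genuinely different approach.
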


\begin{proof}
    Given an $X$-valued function $f$ defined on $\real^{d+1}_+$, we consider the scalar-valued function $ \wt{f}(x, t) = \| f(x, t) \|_X$. Then one may apply \eqref{A-C} and \eqref{C-A} in the case $X=\mathbb C$ for $\wt{f}$ (see e.g.\cite[Theorem 3]{Coifman1985}) to obtain \eqref{A-C} and \eqref{C-A} for general $X$. Thus by using the operator $\mathcal Q$ and the density in Lemma \ref{loc}, for any $f \in BMO_{q, L}^p(\real^d; X)$ ($1 \leq p \leq q$), we get
    \[
        \| f \|_{H^p_{q, L}(X)} = \| \A_q(\Q(f)) \|_p \lesssim q^{\frac{p}{q}}\| \mathcal{C}_q(\Q(f)) \|_p = \| f \|_{BMO_{q, L}^p(X)},
    \]
 and the same argument works for $q < p < \infty$.
\end{proof}

\begin{remark}
    In particular, $BMO_{q, L}(\real^d; X)$ is closely related to the Carleson measure. Recall that a scalar-valued measure $ \mu $ defined on $ \real^{d+1}_+ $ is a Carleson measure if there exists a constant $ c $ such that for all balls $ B $ in $ \real^d $,
    \[
        |\mu(\wh{B})| \leq c|B|,
    \]
    where $ \wh{B} $ is the tent over $ B $. The norm is defined as
    \[
        \| \mu \|_c = \sup_{B} |B|^{-1} |\mu( \wh{B} )|,
    \]
    where the supremum runs over all the balls in $\real^d$.

    For a vector-valued function $f \in \N_L$, we define a measure $\mu_{q, f}$ as
    \[
        \mu_{q, f}(x, t) = \frac{\| \Q(f)(x,t) \|_X^q \d x\d t }{t}.
    \]
    Then $f$ belongs to $ BMO_{q,L}(\real^d; X)$ if and only if $\mu_{q, f}$ is a Carleson measure, and moreover
    \[
        \| f \|_{BMO_{q, L}(X)} = \| \mu_{q, f} \|^{\frac{1}{q}}_c.
    \]
\end{remark}

\section{Theory of vector-valued tent spaces and two key linear operators}\label{Tentspace}
In this section, we will first present the basic theory of vector-valued tent spaces such as atomic decomposition, interpolation and duality, and then introduce two important linear operators $\mathcal K$ and $\pi_L$ which enable us to exploit the basic theory of tent spaces to investigate in later sections vector-valued Wilson's square functions and Theorem \ref{main2}. 

Note that if the underlying Banach space $X$ has some geometric property such as UMD, then the vector-valued tent space theory have been established in the literature \cite{Hytonen2008, Kemppainen2014,Kemppainen2016}. In the present paper, we observe that the theory of vector-valued tent space holds for any Banach space; and this is quite essential for the applications in the present paper.  

\subsection{Basic theory of vector-valued tent spaces}
We begin this subsection by presenting  the atomic decomposition of tent space in the context of vector-valued context. It has been established in \cite[Theorem 4.5]{Kemppainen2014}, for the completeness of this article, we will attach the proof. Recall that a strongly measurable function $a : \real^{d+1}_+ \to X$ is called an $(X, q)$-atom if
\begin{enumerate}
    \item $\supp\, a \subset \wh{B}$ where $B$ is a ball in $\real^d$;
    \item $ \sk{ \int_{\real^{d+1}_+} \|a(x, t)\|_X^q \, \frac{\d x \d t}{t} }^{\frac{1}{q}} \leq |B|^{\frac1q - 1}. $
\end{enumerate}

\begin{lemma}\label{atmicDectent}
Let $X$ be any fixed Banach space and $1<q<\infty$.    For each $f \in T_q^1(\real^{d+1}_+; X)$, there exists a sequence of complex numbers $\{\lambda_k \}_{k \geq 1}$ and $(X, q)$-atoms $a_k$ such that
    \[
        f = \sum_{k \geq 1}\lambda_k a_k, \quad \| f \|_{T_q^1(X)} \approx \sum_{k \geq 1}|\lambda_k|.
    \]
\end{lemma}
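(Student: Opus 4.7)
The plan is to adapt the classical Coifman-Meyer-Stein atomic decomposition \cite[Theorem 2]{Coifman1985} verbatim, substituting the scalar modulus by the Banach-space norm $\|\cdot\|_X$. The tent quasi-norm depends on $f$ only through the scalar function $(y,t)\mapsto\|f(y,t)\|_X$, so no geometric hypothesis on $X$ is ever invoked -- this is precisely the observation that frees Lemma \ref{atmicDectent} from any assumption on $X$. I would first reduce, via Lemma \ref{loc}, to the case $f\in C_c(\real^{d+1}_+)\otimes X$ so every quantity below is finite, and then recover the general case by a standard density and limit argument.

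The construction itself proceeds in three stages. \textbf{Stage 1 (level sets).} Form $O_k=\lk{x\in\real^d:\A_q(f)(x)>2^k}$ for $k\in\ent$, together with the Hardy-Littlewood expansions $O_k^*=\lk{M\chi_{O_k}>\eta}$ for some fixed $\eta\in(0,1)$ close to $1$. The weak-$(1,1)$ inequality gives $|O_k^*|\leq C|O_k|$, and the choice of $\eta$ secures the density property that for every $(y,t)\notin \wh{O_k^*}$,
$$|B(y,t)\cap (O_k^*)^c|\gtrsim t^d.$$
\textbf{Stage 2 (Whitney decomposition).} Whitney-decompose $O_k^*=\bigcup_j B_{k,j}$ with $\sum_j|B_{k,j}|\lesssim |O_k^*|$ and $\wh{O_k^*}\subset\bigcup_j \wh{cB_{k,j}}$ for a dimensional constant $c$. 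The tent-shells $E_k:=\wh{O_k^*}\setminus \wh{O_{k+1}^*}$ are pairwise disjoint, form a decreasing telescope, and cover $\supp f$ up to a null set (using the temporary compact-support assumption so that $O_k=\emptyset$ for large $k$ and $\wh{O_k^*}\supset\supp f$ for very negative $k$). I would further partition $E_k=\bigsqcup_j T_{k,j}$ with $T_{k,j}\subset \wh{cB_{k,j}}$, e.g.\ by assigning each $(y,t)\in E_k$ to the first $B_{k,j}$ whose dilated tent contains it. \textbf{Stage 3 (atoms).} Set
$$\lambda_{k,j}:=c_0\cdot 2^k|B_{k,j}|,\qquad a_{k,j}:=\lambda_{k,j}^{-1}\,f\cdot\chi_{T_{k,j}},$$
with $c_0$ to be calibrated below, so that $f=\sum_{k,j}\lambda_{k,j}a_{k,j}$ a.e.\ and $\supp a_{k,j}\subset \wh{cB_{k,j}}$.

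The main technical point -- which I expect to be the substance of the argument, though not a genuine obstacle -- is verifying that each $a_{k,j}$ satisfies the $L^q$ normalization of an $(X,q)$-atom. The density property from Stage 1 combined with Fubini reduces the problem to an $\A_q$-tent integral over $(O_{k+1}^*)^c\cap cB_{k,j}$, on which $\A_q(f)\leq 2^{k+1}$ because $(O_{k+1}^*)^c\subset (O_{k+1})^c$; this produces
$$\int_{T_{k,j}}\|f(y,t)\|_X^q\,\frac{\d y\,\d t}{t}\lesssim \int_{(O_{k+1}^*)^c\cap cB_{k,j}}\A_q(f)(x)^q\,\d x\lesssim 2^{(k+1)q}|B_{k,j}|,$$
after which one fixes $c_0$ so that $\|a_{k,j}\|_{L^q(\d y\,\d t/t;X)}\leq |cB_{k,j}|^{1/q-1}$. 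The remaining ingredients are routine: the coefficient bound $\sum_{k,j}|\lambda_{k,j}|\lesssim \sum_k 2^k|O_k^*|\lesssim \sum_k 2^k|O_k|\lesssim \|\A_q(f)\|_1=\|f\|_{T^1_q(X)}$ follows immediately from Stage 1, while the reverse inequality $\|\sum_{k,j}\lambda_{k,j}a_{k,j}\|_{T^1_q(X)}\lesssim \sum_{k,j}|\lambda_{k,j}|$ reduces to the trivial fact $\|a\|_{T^1_q(X)}\leq 1$ for any $(X,q)$-atom (H\"older on $cB_{k,j}$). The only real bookkeeping is to choose the Whitney dilation $c$ and the partition $\lk{T_{k,j}}$ consistently so that every piece genuinely sits inside a tent; this proceeds exactly as in the scalar case.
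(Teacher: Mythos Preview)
Your proposal is correct and follows essentially the same approach as the paper: both adapt the classical Coifman--Meyer--Stein atomic decomposition by replacing $|f|$ with $\|f\|_X$, using the level sets $O_k$ and their maximal enlargements $O_k^*$, then decomposing the tent-shells $\wh{O_k^*}\setminus\wh{O_{k+1}^*}$ over a covering of $O_k^*$. The only differences are cosmetic---the paper uses a Vitali covering (following Kemppainen) and a partition of unity $\{\chi_k^j\}$, while you use a Whitney decomposition and a disjoint partition $\{T_{k,j}\}$, and the paper defines $\lambda_k^j$ via the actual $L^q$-mass of $\A_q(f_k)$ on $5B_k^j$ rather than the cruder $2^k|B_{k,j}|$---but the mechanism and the key estimate (bounding the tent integral by $\A_q(f)^q$ on $(O_{k+1}^*)^c$, where it is $\leq 2^{(k+1)q}$) are identical.
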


\begin{proof}
Let $a$ be an $(X, q)$-atom and $\supp\, a \subset \wh{B}$ where $B = B(c_B, r_B)$ with center $c_B$ and radius $r_B$. If $\Gamma(x) \cap \wh{B} \neq \emptyset$, there exists $(y, t) \in \Gamma(x) \cap \wh{B}$. Then we have $|x - c_B| \leq |x - y| + |y - c_B| < t + r_B < 2r_B$. By H\"{o}lder's inequality and Fubini's theorem,
\[
    \| a \|_{T^1_q(X)} =\int_{2B}\sk{\int_{\Gamma(x)} \| a(y, t) \|_X^q \,\frac{\d y\d t}{t^{d+1}}}^{\frac{1}{q}}\d x \lesssim |2B|^{1-\frac{1}{q}}\sk{\int_{\real^{d+1}_+}\| a(y, t) \|_X^q \,\frac{\d y\d t}{t}}^{\frac{1}{q}} \lesssim 1.
\]
Therefore any $(X, q)$-atom belongs to $T^1_q(\real^{d+1}_+;X)$. 

Let $0 < \lambda < 1/2$. We define two sequences of open sets $\lk{O_k}_{k \in \ent}$ and $\lk{O_k^*}_{k \in \ent}$ as
\[
    O_k= \lk{ x \in \real^d: \A_q(f)(x) > 2^k },\quad O_k^*=\lk{ x\in \real^d: M(\1_{O_k})(x) > 1-\lambda },
\]
where $M(\1_{O_k})$ is the centered Hardy-Littlewood maximal function. It is clear that both $O_k$ and $O^*_k$ have finite measure. Additionally, the following properties hold: $O_{k+1} \subset O_k$, $O^*_{k+1} \subset O^*_k$ and $|O^*_k| \leq C_\lambda|O_k|$ (see e.g. \cite{Coifman1985}).
    
We follow a similar construction as in \cite{Kemppainen2014}. The {Vitali covering lemma} and \cite[Lemma 4.2]{Kemppainen2016} assert that for each $O_k^*$, there exist disjoint balls $B_k^j \subset O_k^*$ ($j \geq 1$) such that 
\[
    \wh{O}^*_k \subset \bigcup_{j \geq 1} \wh{5B}_k^j, \quad \sum_{j \geq 1}|B_k^j| \leq |O_k^*|.
\]
With this setup, we proceed to define a family of functions $\chi_k^j$ by the partition of unity:
\[
    0 \leq \chi_k^j \leq 1,\ \sum_{j \geq 1}\chi_k^j = 1 \text{ on } \wh{O}_k^* \ \text{ and } \ \supp\, \chi_k^j \subset \wh{5B}_k^j.
\]
Therefore 
\[
    f =\sum_{k \in \ent} f_k = \sum_{k\in \ent}\sum_{j \geq 1} \chi_{k}^jf_k = \sum_{k\in \ent}\sum_{j \geq 1} \lambda_k^j a_k^j,
\]
where
\begin{equation}\label{Def_f_k}
    f_k = f \1_{\wh{O}^*_k\setminus \wh{O}^*_{k+1} },\ \lambda_k^j = |5B_k^j|^{\frac{1}{q'}}\sk{ \int_{ 5B_k^j } \A_q(f_k)^q(x) \,\d x }^{\frac{1}{q}}, \ a_k^j = \frac{\chi_k^jf_k}{\lambda_k^j}.
\end{equation}
Now we only need to show that each $a_k^j$ is an $(X, q)$-atom and 
\[
    \sum_{k \in \ent} \sum_{j \geq 1}|\lambda_k^j| \lesssim \| f \|_{T^1_q(X)}.
\]
It is clear that $\supp\, a_k^j \subset \wh{5B}_{k}^j $. Furthermore,
\[
\begin{aligned}
    \| a \|_{L^q(\real^{d+1}_+; X)}^q & \leq |5B_k^j|^{1-q}\| \A_q(f_k)\1_{5B_k^j} \|_q^{-q} \sk{\int_{\wh{5B}_k^j} \| f_k(y, t) \|_X^q \,\frac{\d y\d t}{t} } \\ 
    & \leq |5B_k^j|^{1-q}\| \A_q(f_k)\1_{5B_k^j} \|_q^{-q} \sk{\int_{5B_k^j} \sk{\A_q(f_k)(x)}^q \,\d x }\\ & = |5B_k^j|^{1-q}.
\end{aligned}  
\]
Hence each $a_k^j$ is an $(X, q)$-atom. 

According to \cite[Lemma 5]{Coifman1985}, it is known that $\A_q(f_k)$ is supported in $O^*_k \setminus O_{k+1}$, then we deduce that $\A_q(f_k)(x) \leq 2^{k+1}$ by definition. Thus
\[
    \sum_{k \in\ent}\sum_{j \geq 1} |\lambda_k^j| \leq \sum_{k \in\ent}\sum_{j \geq 1} |5B_k^j|^{\frac{1}{q'}} 2^{k+1} |5B_k^j|^{\frac{1}{q}} \leq \sum_{k \in \ent} 2^{k + 1}|O_k^*| \leq \sum_{k \in \ent} 2^{k + 1}C_\lambda|O_k|.
\]
However, $\A_q(f)(x) > 2^{(k+m)}$ on $O_{k+m}$, then
\[
    2^{k}|O_k| = \int_{O_k}2^{k} \,\d x = \sum_{m = 0}^{\infty} \int_{O_{k+m}\setminus O_{k+m+1}} 2^{k}\,\d x \leq \sum_{m = 0}^{\infty} 2^{-m}\int_{O_{k+m}\setminus O_{k+m+1}} \A_q(f)(x)\,\d x.
\]
Hence
\[
    \sum_{k \in \ent} 2^{k + 1}C_\lambda|O_k| \leq \sum_{m = 0}^{\infty}\sum_{k \in \ent} 2^{-m+1}C_\lambda\int_{O_{k+m}\setminus O_{k+m+1}} \A_q(f)(x)\,\d x \lesssim \| f \|_{T^1_q(X)}.
\]
We complete the proof.
\end{proof}

\begin{remark}
From the atomic decomposition of $T^1_q(\mathbb R^{d+1}_1; X)$---Lemma \ref{atmicDectent}, one may conclude a molecule decomposition of the corresponding Hardy space. This might have further applications, and we include it in the Appendix.
\end{remark}

The following lemma is the complex interpolation theory of vector-valued tent spaces.
\begin{lemma}\label{Intpo}
   Let $X$ be any fixed Banach space, $1<q<\infty$ and $1 \leq p_1 < p < p_2 < \infty$ such that $1/p = (1-\theta)/p_1 + \theta/p_2$ with $0\leq\theta\leq1$. Then
   \[
        [T^{p_1}_q(\real^{d+1}_+;X), T^{p_2}_q(\real^{d+1}_+;X)]_\theta = T^p_q(\real^{d+1}_+;X),
   \]
   with equivalent norms, where $[\cdot, \cdot]_\theta$ is the complex interpolation space. More precisely, for $f\in C_c(\real^{d+1}_+)\otimes X$, one has {
   \[
        \|f \|_{T^p_q(X)} \lesssim \| f \|_{[T^{p_1}_q(\real^{d+1}_+;X), T^{p_2}_q(\real^{d+1}_+;X)]_\theta} \lesssim p^{\frac{1}{q}} \| f \|_{T^p_q(X)}.
   \]
   }
\end{lemma}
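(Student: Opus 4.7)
The plan is to prove the two inequalities separately, adapting the classical scalar argument of Coifman--Meyer--Stein to the present vector-valued setting. For the lower bound $\|f\|_{T^p_q(X)} \lesssim \|f\|_{[T^{p_1}_q, T^{p_2}_q]_\theta}$, I will linearize the sublinear operator $\A_q$ by $L^q$--$L^{q'}$ duality, writing for each fixed $x \in \real^d$
\[
\A_q(f)(x) = \sup_g \mathrm{Re} \int_{\Ga(x)} \lara{f(y,t), g(y,t)}_{X, X^*}\,\frac{\d y\,\d t}{t^{d+1}},
\]
where the supremum runs over $g$ in the unit ball of $L^{q'}(\Ga(x); X^*)$. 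For each such $g$, the associated functional is linear in $f$ and bounded from $T^{p_i}_q(X)$ into $L^{p_i}(\real^d)$ with norm at most $1$, so classical Riesz--Thorin interpolation (together with a routine measurable-selection step to reconstruct the pointwise supremum) delivers the claimed inequality with an absolute constant.

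For the upper bound $\|f\|_{[T^{p_1}_q, T^{p_2}_q]_\theta} \lesssim p^{1/q} \|f\|_{T^p_q(X)}$, I will construct an analytic family $F(z)$ in the closed strip $\overline{S} = \{0 \leq \mathrm{Re}(z) \leq 1\}$ with $F(\theta) = f$, built on the level-set machinery from the proof of Lemma~\ref{atmicDectent}. Given $f \in C_c(\real^{d+1}_+) \otimes X$, set $g = \A_q(f)$, $O_k = \{g > 2^k\}$, $O_k^*$ the Hardy--Littlewood dilate, and $f_k = f \cdot \1_{\wh{O}_k^* \setminus \wh{O}_{k+1}^*}$, as in \eqref{Def_f_k}. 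The $f_k$ have pairwise disjoint supports in $\real^{d+1}_+$, so that for any scalars $c_k$,
\[
\A_q\Big(\sum_k c_k f_k\Big)(x)^q = \sum_k |c_k|^q \,\A_q(f_k)(x)^q,
\]
and moreover $\A_q(f_k) \lesssim 2^k \1_{O_k^*}$. Setting $1/p(z) = (1-z)/p_1 + z/p_2$, I define
\[
F(z) = \sum_k 2^{k(p/p(z) - 1)}\,f_k,
\]
a finite sum for $f \in C_c(\real^{d+1}_+) \otimes X$ (hence trivially analytic in $z$), with $F(\theta) = f$. On the boundary line $\mathrm{Re}(z) = j \in \{0, 1\}$, the disjointness and the level bound for $\A_q(f_k)$ yield
\[
\A_q(F(z))(x)^q \lesssim \sum_k 2^{qk\,p/p_j}\,\1_{O_k^*}(x),
\]
which, for each $x$ with $k^*(x) = \sup\{k: x \in O_k^*\}$, is a convergent geometric series bounded by a constant (depending only on $q$) times $2^{qk^*(x)\,p/p_j}$. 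Since $2^{k^*(x)} \approx g(x)$, integration yields $\|F(z)\|_{T^{p_j}_q(X)} \lesssim \|f\|_{T^p_q(X)}^{p/p_j}$. Hadamard's three-lines lemma then gives $\|f\|_{[T^{p_1}_q, T^{p_2}_q]_\theta} \lesssim \|f\|_{T^p_q(X)}^{p[(1-\theta)/p_1 + \theta/p_2]} = \|f\|_{T^p_q(X)}$, with the factor $p^{1/q}$ absorbed into the implicit constants from the geometric-series sums and the level-set comparison $|O_k^*| \leq C_\lambda |O_k|$.

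The principal obstacle is the delicate bookkeeping required to extract the sharp $p^{1/q}$ dependence of the implicit constant: one must track carefully how the nested structure $O_{k+1} \subset O_k \subset O_k^*$, the measure comparison $|O_k^*| \leq C_\lambda |O_k|$, and the ratios $2^{qk(p/p_j - 1)}$ appearing in the two boundary geometric series interact before the three-lines step is applied. A secondary technicality lies in the lower-bound argument: the linearization of $\A_q$ via a pointwise supremum demands a measurable selection of optimizing dual elements $g$ for each $x$. The density of $C_c(\real^{d+1}_+) \otimes X$ in $T^p_q(\real^{d+1}_+; X)$ provided by Lemma~\ref{loc} allows us to reduce to the same class of test functions treated in the scalar case, so that the selection can be performed just as for complex-valued tent spaces.
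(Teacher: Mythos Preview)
Your proposal is essentially correct, and for the upper bound in the endpoint case $p_1=1$ it coincides with the paper's argument (the paper defines exactly the same analytic family $F(z)=\sum_k 2^{k(\alpha(z)p-1)}f_k$ built from the level-set pieces $f_k$ of Lemma~\ref{atmicDectent}, then defers the boundary estimates to \cite{Coifman1985}). For the lower bound and for the range $1<p_1<p_2<\infty$, however, the paper proceeds differently: it introduces the isometric embedding $i:T^p_q(X)\to L^p(\real^d;E)$ with $E=L^q(\real^{d+1}_+;X)$ given by $i(f)(x,y,t)=\1_{\Gamma(x)}(y,t)f(y,t)$, together with a projection $N$ from $L^p(E)$ onto the range of $i$, and then runs complemented-subspace interpolation. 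The lower bound falls out immediately from the fact that $i$ is an isometry on each endpoint, so $\|f\|_{T^p_q}=\|i(f)\|_{L^p(E)}\le \|f\|_{[T^{p_1}_q,T^{p_2}_q]_\theta}$ without any measurable selection. The explicit $p^{1/q}$ in the upper bound comes from the norm of $N$, which is controlled via a vector-valued Hardy--Littlewood maximal function by $\max\{p^{1/q},p'^{1/q'}\}$. Your linearization route for the lower bound is valid but heavier; the paper's retraction argument is cleaner and simultaneously handles the $p_1>1$ upper bound without rerunning the level-set construction.

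One small correction: your pointwise claim ``$2^{k^*(x)}\approx g(x)$'' is not right, since $O_k^*$ can be strictly larger than $O_k=\{g>2^k\}$. What you actually need (and what works) is the integral estimate
\[
\int_{\real^d} 2^{p\,k^*(x)}\,\d x \;=\; \sum_k 2^{kp}\,|O_k^*\setminus O_{k+1}^*| \;\le\; \sum_k 2^{kp}\,|O_k^*| \;\lesssim\; \sum_k 2^{kp}\,|O_k| \;\approx\; \|g\|_p^p,
\]
using $|O_k^*|\le C_\lambda|O_k|$ and the layer-cake formula. With this in place your boundary estimates go through.
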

\begin{proof}
For the interpolation theory, we introduce two important operators, which allow us to relate $T^p_q(\real^{d+1}_+; X)$ with $L^p(\real^d; E)$ with $E$ being the Banach space $L^q(\real^{d+1}_+; X)$ equipped with the measure $\d x\d t/t^{d+1}$.  The first operator is defined as
\[
    i(f)(x, y, t) = \1_{\Gamma(x)} (y, t) f(y, t),
\]
for $f \in T^p_q(\real^{d+1}_+; X)$. Then it is clear that $\| i(f) \|_{ L^p(E) } = \| f \|_{T^p_q(X)}$. Denote by  $\wt{T}_q^p$ the range of the operator $i$. Now we introduce another operator $N$ given by
\[
    N(f)(x, y, t) = \1_{\Gamma(x)}(y, t) \frac{1}{w_dt^d} \int_{ |z - y|< t } f(z, y, t) \,\d z,
\]
where $w_d$ is the volume of the $d$-dimensional unit ball. It is known that $N$ is a continuous projection from $L^p(\real^d; E)$ onto itself with range $\wt{T}_q^p$  for $1 < p < \infty$ (cf. \cite{Harboure1991}).  Consider the maximal operator 
\[
    M_1(f)(x, y, t) = \sup_{x \in B} \frac{1}{|B|} \int_B \|f(z, y, t)\|_X\, \d z,
\]
where the supremum is taken over all balls $B$ in $\real^d$. It is known from the maximal inequalities (see e.g. \cite[Chapter \Rmnum{2}]{Stein1993}) that $M_1$ is bounded on $L^p\sk{ \real^d; L^q(\real^{d+1}; \d y\d t/t^{d+1}) }$ for $1 < p < \infty$; in particular, we view $\|f\|_X$ as a scalar-valued function in $L^p\sk{ \real^d; L^q(\real^{d+1}; \d y\d t/t^{d+1}) }$, then
\[
    \| M_1(f) \|_{L^p\sk{ \real^d; L^q\sk{ \real^{d+1}_+, \frac{\d y\d t}{t^{d+1}}} } } = \| M_1(\|f\|_X) \|_{L^p\sk{ \real^d; L^q\sk{ \real^{d+1}_+, \frac{\d y\d t}{t^{d+1}}} } } \lesssim p^{\frac{1}{q}}\| f \|_{L^p(E)}, \quad q \leq p < \infty.
\]
Then we deduce from the definition of $N$ that
\[
    \| N(f)(x, y, t) \|_X \leq \1_{\Gamma(x)}(y, t) \frac{1}{|B(y, t)|} \int_{ B(y, t) } \|f(z, y, t)\|_X \,\d z \leq M_1(f)(x, y, t).
\]
Therefore
\[
    \| N(f) \|_{L^p(E)} \leq \| M_1(f) \|_{L^p\sk{ \real^d; L^q\sk{ \real^{d+1}_+; \frac{\d y\d t}{t^{d+1}}} }} \lesssim p^{\frac{1}{q}} \| f \|_{L^p(E)}, \quad q \leq p < \infty.
\]
We denote by $F$ the Banach space $L^{q'}(\real^{d+1}_+; X^*)$ equipped with the measure $\d x\d t/t^{d+1}$. Then it is clear that $F \subset E^*$ and $F$ is norming for $E$. For $1 < p < q$, we have 
\[
\begin{aligned}
    \| N(f) \|_{L^p(E)} & = \sup_{g} \jdz{ \int_{\real^d}\int_{\real^{d+1}_+} \lara{ N(f)(x, y, t), g(x, y, t) }_{X\times X^*}\,\frac{\d y\d t}{t^{d+1}}\d x }\\
    & = \sup_{g} \jdz{ \int_{\real^d}\int_{\real^{d+1}_+} \lara{ f(x, y, t), N(g)(x, y, t) }_{X\times X^*}\,\frac{\d y\d t}{t^{d+1}}\d x }\\
    & \leq \| f \|_{L^p(E)} \| N(g) \|_{L^{p'}(F)} \lesssim p'^{\frac{1}{q'}}\| f \|_{L^p(E)}\| g \|_{L^{p'}(F)},
\end{aligned}
\]
where the supremum is taken over all $g$ in the unit ball of $L^{p'}(\real^d; F)$. We conclude
\begin{equation}\label{esN}
    \| N(f) \|_{L^p(E)} \lesssim \max\lk{ p^{\frac{1}{q}}, p'^{\frac{1}{q'}} }\| f \|_{L^p(E)}, \quad 1 < p < \infty.
\end{equation}

Now we turn to the interpolation theory. The proof of the case $1 < p_1 < p_2 < \infty$ follows from \cite{Harboure1991} by virtue of the immersion $i$ and the projection $N$. 

For the case $p_1 = 1$, we adapt the classical argument as in \cite[Lemma 4, Lemma 5]{Coifman1985}. Since the immersion $i$ is an isometry, the exactness of the exponent $\theta$ of complex interpolation functor reads that
\[
\begin{aligned}
    & \| i(f) \|_{[L^1(\real^d; E), L^{p_2}(\real^d; E)]_\theta} \\
    \leq\ & \| i \|_{T^1_q(\real^{d+1}_+; X) \to L^1(\real^d; E)}^{1- \theta}\| i \|_{T^{p_2}_q(\real^{d+1}_+; X) \to L^{p_2}(\real^d; E)}^{\theta} \| f \|_{[T^1_q(\real^{d+1}_+; X), T^{p_2}_q(\real^{d+1}_+; X)]_\theta} \\
    \leq\ & \| f \|_{[T^1_q(\real^{d+1}_+; X), T^{p_2}_q(\real^{d+1}_+; X)]_\theta}.
\end{aligned}
\]
By the interpolation theory of vector-valued $L^p$ spaces, (see e.g. \cite{Blasco1989}), we have
\[
    \| i(f) \|_{[L^1(\real^d; E), L^{p_2}(\real^d; E)]_\theta} = \| i(f) \|_{L^p(E)} = \| f \|_{T^p_q(X)}.
\]
Thus 
\[
    \| f \|_{T^p_q(X)} \leq \| f \|_{[T^1_q(\real^{d+1}_+; X), T^{p_2}_q(\real^{d+1}_+; X)]_\theta}.
\]

For the reverse direction, let $f \in T^p_q(\real^{d+1}_+; X)$ and $\| f \|_{T^p_q(X)} = 1$. By taking into account the atomic decomposition of $T^1_q(\real^{d+1}_+; X)$---Lemma \ref{atmicDectent}, we define the interpolation functor $F$ as
\[
    F(z) = \sum_{k \in \ent} 2^{k(\alpha(z)p - 1)}f_k,
\]
where $\alpha(z) = 1-z + z/p_2$ and $f_k$ is defined in \eqref{Def_f_k}. We have $F(\theta) = f$. Then the proof can be then conducted in the same way as in \cite[Lemma 5]{Coifman1985}, we omit the details.
\end{proof}

We now provide a characterization of $T^p_q(\real^{d+1}_+; X)$-norm. It belongs to the norming subspace theory of vector-valued $L^p$-spaces, see e.g. \cite[Chapter \Rmnum{2}, Section 4]{Dinculeanu2014}. The proof is in spirit the same as the scalar-valued case (cf. \cite[Theorem 2.4]{Harboure1991} and \cite[Theorem 1]{Coifman1985}, but we include a proof here to provide explicit orders for later applications.

Let $(\Omega, \F, \mu)$ be a measure space. Recall that a Banach space $X$ has the \emph{Radon-Nikod{\'y}m property} with respect to $(\Omega, \F, \mu)$ if for each $\mu$-continuous vector-valued measure $\nu : \F \to X$ of bounded variation, there exists $g \in L^1(\Omega; X)$ with respect to the measure $\mu$ such that 
\[
    \nu(E) = \int_{E} g \,\d \mu, \quad \forall\, E \in \F.
\]
In the following context, we call a Banach space has the Radon-Nikod{\'y}m property for short when there is no ambiguity. We refer readers to \cite[Chapter \Rmnum{3}]{Diestel1977} for more details.

\begin{lemma}\label{dualtent}
Let $X$ be any fixed Banach space and $1<q<\infty$. The space
    $T^{p'}_{q'}(\real^{d+1}_+; X^*)$ is isomorphically identified as a subspace of the dual space of $T^{p}_{q}(\real^{d+1}_+; X)$. Moreover, it is norming for $T^{p}_{q}(\real^{d+1}_+; X)$ in the following sense, 
    \begin{align}\label{dual1}
        \| f \|_{ T^p_q(X) } \lesssim \max\lk{ p^{\frac{1}{q}}, p'^{\frac{1}{q'}} }\sup_g \jdz{ \int_{\real^{d+1}_+} \lara{ f(x, t), g(x, t) }_{X \times X^*} \,\frac{\d x\d t}{t} }, \quad 1 < p < \infty,
    \end{align}
    where the supremum is taken over all {$g\in C_c(\real^{d+1}_+)\otimes X^*$ such that $\|\mathcal{A}_{q'}(g)\|_{p'} \leq 1$};  and similarly,
    \begin{align}\label{dual2}
        \| f \|_{ T^p_q(X) } \lesssim \sk{\frac{p(q-1)}{q-p}}^{\frac{1}{q'}}\sup_g \jdz{ \int_{\real^{d+1}_+} \lara{ f(x, t), g(x, t) }_{X \times X^*} \,\frac{\d x\d t}{t} }, \quad 1 \leq p < q,
    \end{align}
    where the supremum is taken over all {$g\in C_c(\real^{d+1}_+)\otimes X^*$ such that $\|\mathcal{C}_{q'}(g)\|_{p'} \leq 1$}.
    Furthermore, if $X^*$ has the Radon-Nikod{\'y}m property, then
    \[
        T^{p'}_{q'}(\real^{d+1}_+;X^*) = \sk{ T^{p}_{q}(\real^{d+1}_+;X) }^*, \quad 1\leq p < \infty. 
    \]
\end{lemma}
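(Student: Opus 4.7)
My plan is to dispatch the four claims of the lemma in turn. First, the continuous injection $T^{p'}_{q'}(\real^{d+1}_+; X^*)\hookrightarrow (T^{p}_{q}(\real^{d+1}_+; X))^*$ follows from the Coifman--Meyer--Stein Fubini identity
\[
w_d \int_{\real^{d+1}_+} \langle f(y,t), g(y,t)\rangle_{X\times X^*} \,\frac{\d y\,\d t}{t}
= \int_{\real^d}\int_{\Gamma(x)} \langle f(y,t), g(y,t)\rangle \,\frac{\d y\,\d t}{t^{d+1}}\,\d x,
\]
followed by fiberwise H\"older in the $X\times X^*$ pairing and then H\"older in $L^p(\real^d)$, giving $|\langle f,g\rangle| \lesssim \|\mathcal{A}_q(f)\|_p \|\mathcal{A}_{q'}(g)\|_{p'} = \|f\|_{T^p_q(X)}\|g\|_{T^{p'}_{q'}(X^*)}$.

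For \eqref{dual1} I would build on the isometric embedding $i: T^p_q(X)\hookrightarrow L^p(\real^d;E)$ from the proof of Lemma \ref{Intpo}, together with the fact that $L^{p'}(\real^d;F)$ norms $L^p(\real^d;E)$---a standard vector-valued $L^p$-norming that requires only that the subspace $F = L^{q'}(\real^{d+1}_+; X^*, \d y\d t/t^{d+1})$ norms $E = L^q(\real^{d+1}_+;X, \d y\d t/t^{d+1})$ pointwise, so no RNP is needed here. Given any representing $H \in L^{p'}(\real^d; F)$ of norm $\leq 1$, the averaged function $g(y,t) := (w_d t^d)^{-1}\int_{|x-y|<t} H(x,y,t)\,\d x$ obeys $i(g) = N(H)$, so a Fubini gives $\langle i(f), H\rangle = w_d \langle f, g\rangle$, while the bound on $N$ in \eqref{esN} (applied with data space $F$ and exponent $p'$, which by symmetry yields the same factor) gives $\|g\|_{T^{p'}_{q'}(X^*)} = \|N(H)\|_{L^{p'}(F)} \lesssim \max\{p^{1/q}, (p')^{1/q'}\}$; Lemma \ref{loc} then restricts the resulting supremum to $C_c(\real^{d+1}_+)\otimes X^*$, proving \eqref{dual1}.

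For \eqref{dual2} (valid in the range $1 \leq p < q$) the same construction of $g$ from $H\in L^{p'}(\real^d; F)$ is used, but I estimate $\mathcal{C}_{q'}(g)$ directly. The geometric observation that $\{|x-y|<t\}\subset B$ whenever $(y,t)\in\wh{B}$, combined with Jensen on the pointwise average and Fubini, yields the pointwise bound $\mathcal{C}_{q'}(g)(x_0)^{q'} \lesssim M(\|H(\cdot)\|_F^{q'})(x_0)$, where $M$ is the Hardy--Littlewood maximal operator. For $p=1$ (so $H\in L^\infty$), this immediately gives $\|\mathcal{C}_{q'}(g)\|_\infty \lesssim 1$, matching $(p(q-1)/(q-p))^{1/q'}|_{p=1}=1$. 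For $1<p<q$, one has $p'/q'>1$, and the Hardy--Littlewood inequality on $L^{p'/q'}$ with its sharp constant $\approx \frac{p'/q'}{p'/q'-1} = \frac{p(q-1)}{q-p}$ produces the stated bound after raising to the $1/q'$-power. The reverse estimate $\|g\|_{T^{p'}_{q'}(X^*)} \lesssim \|L_g\|_{(T^p_q(X))^*}$ needed to complete the isomorphic embedding then follows by symmetry from \eqref{dual1} applied with the roles of $(p,q,X)$ and $(p',q',X^*)$ swapped when $1<p<\infty$, and for $p=1$ from a direct construction of a test function $f$ supported on a single tent $\wh{B}$ via $L^q$--$L^{q'}$ duality on $\wh{B}$ (tempered against $g$).

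For the surjectivity under the Radon--Nikod\'ym property of $X^*$: given $\phi \in (T^p_q(X))^*$, Hahn--Banach extends $\phi\circ i^{-1}$ from the closed subspace $i(T^p_q(X))\subset L^p(\real^d;E)$ to some $\tilde\phi \in (L^p(\real^d; E))^*$. When $X^*$ has RNP, so does $F = L^{q'}(\real^{d+1}_+; X^*)$ (standard Bochner theory, since $\d y\d t/t^{d+1}$ is $\sigma$-finite and $q'\in(1,\infty)$), and hence the Bochner duality $(L^p(\real^d;E))^* = L^{p'}(\real^d; F)$ holds for $1\leq p<\infty$; the representing $H$ then produces, via the same averaging, an honest $g \in T^{p'}_{q'}(\real^{d+1}_+; X^*)$ realizing $\phi$. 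The main obstacle I anticipate is the careful book-keeping of constants in \eqref{dual2}---ensuring only the factor $(p(q-1)/(q-p))^{1/q'}$ appears, without picking up stray contributions of $\max\{p^{1/q},(p')^{1/q'}\}$---together with verifying throughout paragraphs two and three that only $L^p$-\emph{norming} (not full $L^p$-duality) is invoked, so that no RNP hypothesis on $X$ or $X^*$ is smuggled in before the surjectivity step.
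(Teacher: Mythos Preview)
Your proposal is correct and follows essentially the same architecture as the paper's proof: both use the isometric embedding $i:T^p_q(X)\to L^p(\real^d;E)$, the projection $N$, the norming of $L^p(\real^d;E)$ by $L^{p'}(\real^d;F)$ (no RNP needed), and the pointwise bound $\mathcal{C}_{q'}(g)^{q'}\lesssim M(\|H(\cdot)\|_F^{q'})$ for the second estimate.

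There is one genuine difference worth noting. For the endpoint $p=1$ of \eqref{dual2}, the paper asserts that ``the above arguments adapted from \cite{Harboure1991} do not work any more'' and instead appeals to the atomic decomposition (Lemma~\ref{atmicDectent}) together with the Coifman--Meyer--Stein argument. Your route is more direct: since $p'=\infty$ makes $H\in L^\infty(\real^d;F)$, the Hardy--Littlewood bound $M(\|H\|_F^{q'})\le\|H\|_{L^\infty(F)}^{q'}$ is trivial, and the norming of $L^1(\real^d;E)$ by $L^\infty(\real^d;F)$ holds whenever $F$ norms $E$ (no RNP required). This does work, and the resulting constant matches $(p(q-1)/(q-p))^{1/q'}|_{p=1}=1$. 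So your treatment of $p=1$ is a genuine simplification over the paper's, which appears to have overlooked that its own maximal-function argument already covers this endpoint. Your explicit verification of the reverse inequality $\|g\|_{T^{p'}_{q'}(X^*)}\lesssim\|L_g\|$ (via role-swapping for $1<p<\infty$, and atom-testing for $p=1$) is also more careful than the paper, which leaves this implicit; just be sure to note that the swap works because $X$ norms $X^*$, so $E=L^q(X)$ norms $F=L^{q'}(X^*)$.
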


\begin{proof}
We adopt the maps $i$ and $N$ used in the proof of Lemma \ref{Intpo}.  We first prove the estimate \eqref{dual1}. 

For any { $g \in C_c(\real^{d+1}_+) \ot X^*$ } and  $f \in T^p_q(\real^{d+1}_+; X)$, we denote by
\[
    g(f) = \int_{\real^{d+1}_+} \lara{ f(x, t), g(x, t) }_{X \times X^*} \,\frac{\d x\d t}{t}.
\]
Thus we have
\begin{equation}\label{g(f)}
\begin{split}
    |g(f)| & = \jdz{ \int_{\real^{d+1}_+} \lara{ f(y, t), g(y, t)\sk{ w_d^{-1}\int_{|x - y| < t} 1\,\d x } }_{X \times X^*} \,\frac{\d y\d t}{t^{d+1}} } \\
    & = w_d^{-1}\jdz{ \int_{\real^d}\int_{\real^{d+1}_+} \lara{ i(f)(x, y, t), i(g)(x, y, t) }_{X \times X^*} \,\frac{\d y\d t}{t^{d+1}}\d x }. 
\end{split}
\end{equation}
Since $i(f) \in \wt{T}^p_q$, we have $N(i(f)) = i(f)$. Then we deduce that 
\begin{equation}\label{normingf}
\begin{split}
    \| f \|_{T^p_q(X)} & = \| i(f) \|_{L^p(E)} = \sup_{g}\jdz{ \int_{\real^d}\int_{\real^{d+1}_+} \lara{ i(f)(x, y, t), g(x, y, t) }_{X \times X^*} \,\frac{\d y\d t}{t^{d+1}}\d x } \\
    & = \sup_{g}\jdz{ \int_{\real^d}\int_{\real^{d+1}_+} \lara{ N(i(f))(x, y, t), g(x, y, t) }_{X \times X^*} \,\frac{\d y\d t}{t^{d+1}}\d x } \\
    & = \sup_{g}\jdz{ \int_{\real^d}\int_{\real^{d+1}_+} \lara{ i(f)(x, y, t), N(g)(x, y, t) }_{X \times X^*} \,\frac{\d y\d t}{t^{d+1}}\d x }
\end{split}
\end{equation}
where the supremum is taken over all $g$ in the unit ball of $L^{p'}(\real^d; F)$. Notice that $N(g) = i\sk{i^{-1}(N(g))}$ and by \eqref{esN}
\[
    \| i^{-1}(N(g)) \|_{T^{p'}_{q'}(X^*) } = \| N(g) \|_{L^{p'}(F)} \lesssim \max\lk{ p^{\frac{1}{q}}, {p'}^{\frac{1}{q'}} }\| g \|_{L^{p'}(F)}.
\]
Consequently, combining \eqref{g(f)} and \eqref{normingf}, we obtain 
\[
    \| f \|_{T^p_q(X)} \lesssim \max\lk{ p^{\frac{1}{q}}, {p'}^{\frac{1}{q'}} } \sup_{g} |g(f)|, \quad 1 < p < \infty,
\]
where the supremum is taken over all $g\in C_c(\real^{d+1}_+)\otimes X^*$ such that $\|\mathcal{A}_{q'}(g)\|_{p'} \leq 1$, and we actually exploit a limiting argument: since not only the subset of $C_c(\real^{d+1}_+) \otimes X^*$ with norm $\|\mathcal{A}_{q'}(g)\|_{p'} \leq 1$ is contained in the unit ball of $T^{p'}_{q'}(\real^{d+1}_+; X^*)$, but also its closure contains the unit sphere, and thus one concludes that this subset is still norming for $T^p_q(\real^{d+1}_+; X)$. 

\smallskip

Now we deal with the estimate \eqref{dual2} in the case $1 < p < q$. Let $g\in L^{p'}(\real^d; F)$. By definition we have
\[
\begin{aligned}
    \| i^{-1}(N(g))(y, t) \|_{X^*}^{q'} & \leq \sk{ \frac{1}{|B(y, t)|} \int_{B(y, t)}\| g(z, y, t) \|_{X^*} \,\d z }^{q'} \\
    & \leq \frac{1}{|B(y, t)|} \int_{|z - y|< t} \| g(z, y, t) \|_{X^*}^{q'} \,\d z.
\end{aligned}
\]
For a ball $B$ in $\real^d$, we observe
\[
\begin{aligned}
    \int_{\wh{B}}\| i^{-1}(N(g))(y, t) \|_{X^*}^{q'}\,\frac{\d y\d t}{t} & \lesssim \int_{\wh{B}} \int_{|z - y|< t }\| g(z, y, t) \|_{X^*}^{q'} \,\d z\frac{\d y\d t}{t^{d+1}} \\
    & \leq \int_{2B} \int_{\real^{d+1}_+} \| \1_{\wh{B}}(y, t)g(z, y, t) \|_{X^*}^{q'} \,\frac{\d y\d t}{t^{d+1}}\d z\\
    & = \int_{2B} H^{q'}(z)\,\d z,
\end{aligned}
\]
where
\[
    H(z) = \sk{ \int_{\real^{d+1}_+} \| \1_{\wh{B}}(y, t)g(z, y, t) \|_{X^*}^{q'} \,\frac{\d y\d t}{t^{d+1}} }^{\frac{1}{q'}}.
\]
Then we have
\[
    \mathcal{C}_{q'}\mk{ i^{-1}(N(g)) }(x) \lesssim \sk{ \M(H^{q'})(x) }^{\frac{1}{q'}}, 
\]
where $\M$ is the Hardy-Littlewood maximal operator. Therefore when $q' < p' \leq \infty$, we obtain
\[
    \| \mathcal{C}_{q'}\mk{ i^{-1}(N(g)) } \|_{p'} \lesssim \| \M(H^{q'})^{\frac{1}{q'}} \|_{p'} \lesssim \sk{\frac{p(q-1)}{q-p}}^{\frac{1}{q'}}\| H \|_{p'} \leq \sk{\frac{p(q-1)}{q-p}}^{\frac{1}{q'}}\| g \|_{L^{p'}(F)}.
\]
Thus we observe 
\[
    \| f \|_{T^p_q(X)} \lesssim \sk{\frac{p(q-1)}{q-p}}^{\frac{1}{q'}}\sup_{g}|g(f)|, \quad 1 < p < q,
\]
with the supremum being taken over all $g\in C_c(\real^{d+1}_+)\otimes X^*$ such that $\|\mathcal{C}_{q'}(g)\|_{p'} \leq 1$.

For the endpoint case $p=1$ of \eqref{dual2}, because of the failure of vector-valued Calder\'{o}n-Zygmund theory, the above arguments adapted from \cite[Theorem 2.4]{Harboure1991} do not work any more. Instead, by using {the atomic decomposition of $T^1_q(\real^{d+1}_+; X)$}---Lemma \ref{atmicDectent}, one may carry out the classical arguments as in \cite[Theorem 1]{Coifman1985} in the present vector-valued setting, and we leave the details to the interested reader. 

\smallskip

When the Banach space $X^*$ has the Radon-Nikod{\'y}m property, one gets $F = E^*$ (cf. \cite[Theorem 1.3.10]{Hytoenen2016}). Then the duality follows from then an analogous argument in \cite{Harboure1991} for $1 < p < \infty$. Again, the duality in the case $p=1$ can be deduced as in the scalar-valued case  \cite[Theorem 1]{Coifman1985}, and we leave the details to the interested reader.
\end{proof}

\subsection{The two linear operators $\mathcal K$ and $\pi_L$} Let $\K: \real^{d+1}_+ \times \real^{d+1}_+ \to \real$ be a reasonable real-valued function such that for $f\in C_c(\real^{d+1}_+)\otimes X  $, the linear operator $\K$ is well defined as below,
\[
    \K(f)(x, t) := \int_{\real^{d+1}_+} \K_{t, s}(x, y)f(y, s)\,\frac{\d y\d s}{s}.
\]


\begin{lemma}\label{BoundTent}
Let $X$ be any fixed Banach space and $1<q<\infty$. Assume that the kernel $\K_{t, s}(x, y)$ satisfies the following estimation: there exist positive constants $ \kappa $, $ \e$, $C$ such that 
\begin{equation}\label{esKts}
    |\K_{t, s}(x, y)| \leq \frac{ C\min\lk{\frac{s}{t},  \frac{t}{s}}^\e \min\lk{ \frac{1}{t}, \frac{1}{s} }^d }{\sk{ 1 + \min\lk{ \frac{1}{t}, \frac{1}{s} }|x - y| }^{d + \kappa}  }.
\end{equation}
    Then the linear operator $\K$ initially defined on $C_c(\real^{d+1}_+)\otimes X$ extends to a bounded linear operator on $T^p_q(\real^{d+1}_+; X)$ for $1 \leq p < \infty$. More precisely, 
    \[
        \| \K(f) \|_{T^p_q(X)} \lesssim_{\e, \kappa} p^{\frac{1}{q}} \| f \|_{T^p_q(X)}, \quad \forall\, f \in T^p_q(\real^{d+1}_+; X), \ 1 \leq p < \infty.
    \]
    Furthermore, for any $f \in C_c(\real^{d+1}_+)\otimes X$, we have
    \[
        \| \mathcal{C}_q\sk{\K(f)} \|_p \lesssim_{\e, \kappa} \| \mathcal{C}_{q}(f) \|_{p}, \quad 1 \leq p \leq \infty.
    \]
\end{lemma}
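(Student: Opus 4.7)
The plan is to establish the $T^p_q$ boundedness by combining two endpoint estimates with complex interpolation, and to prove the Carleson bound by a direct pointwise argument based on kernel decay.

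\textbf{Endpoint bounds for $T^p_q$.} For the $T^q_q$ endpoint, note that $T^q_q(\real^{d+1}_+; X)$ agrees, up to a dimensional constant, with $L^q(\real^{d+1}_+; X;\,dxdt/t)$ by Fubini. The kernel estimate \eqref{esKts} directly gives $\int_{\real^d}|\mathcal K_{t,s}(x,y)|\,dy\lesssim_{\kappa}\min(s/t, t/s)^\e$ and the symmetric bound in $x$; Young's inequality in the spatial variable at each fixed scale pair, followed by Young's inequality on the multiplicative group $(\real_+, dt/t)$ with the integrable Mellin kernel $u\mapsto\min(u, 1/u)^\e$, then yields $\|\mathcal K f\|_{T^q_q(X)}\lesssim_{\e,\kappa}\|f\|_{T^q_q(X)}$. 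For the $T^1_q$ endpoint, Lemma \ref{atmicDectent} reduces the task to a uniform bound $\|\mathcal K a\|_{T^1_q(X)}\lesssim 1$ for every $(X,q)$-atom $a$ supported in $\widehat B$ with $B=B(c_B,r_B)$. Splitting $\int_{\real^d}\mathcal A_q(\mathcal K a)(x)\,dx$ into the local part on $10B$ and the far part on $(10B)^c$, the local part is $\lesssim|10B|^{1-1/q}\|\mathcal K a\|_{T^q_q(X)}\lesssim 1$ by H\"older and the $T^q_q$ bound, while the far part is estimated by a dyadic decomposition in the spatial annuli $2^{k+1}B\setminus 2^k B$ and in the scales, exploiting both the $(d+\kappa)$-decay in space and the $\e$-decay in scale encoded in \eqref{esKts}.

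\textbf{General $p$ via interpolation, and the Carleson estimate.} For $1\leq p<q$, complex interpolation via Lemma \ref{Intpo} between $T^1_q$ and $T^q_q$ immediately gives $\|\mathcal K f\|_{T^p_q(X)}\lesssim_{\e,\kappa}p^{1/q}\|f\|_{T^p_q(X)}$, where the factor $p^{1/q}$ is the one appearing in the second inequality of Lemma \ref{Intpo}. For $q\leq p<\infty$, I would instead interpolate between $T^1_q$ and a suitably chosen $T^{p_0}_q$ with $p_0>p$; the $T^{p_0}_q$ bound will be obtained from the Carleson estimate below combined with the norm equivalence $\|\mathcal A_q\,\cdot\,\|_{p_0}\sim\|\mathcal C_q\,\cdot\,\|_{p_0}$ on the $T^{p_0}_q$-scale afforded by \eqref{A-C}--\eqref{C-A}. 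For the Carleson estimate itself, fix a ball $B\ni x$ and decompose $f=f\1_{\widehat{2B}}+f\1_{(\widehat{2B})^c}=:f_1+f_2$. The $f_1$-contribution to $|B|^{-1}\int_{\widehat B}\|\mathcal K f\|_X^q\,dydt/t$ is controlled, via the $T^q_q$ bound applied to $f_1$, by $|B|^{-1}\int_{\widehat{2B}}\|f\|_X^q\,dydt/t\lesssim\mathcal C_q(f)(x)^q$. The $f_2$-contribution is handled by inserting \eqref{esKts} and summing the resulting double geometric series over dyadic spatial annuli $2^{k+1}B\setminus 2^k B$ ($k\geq 1$) and dyadic scales, the result again being $\lesssim\mathcal C_q(f)(x)^q$. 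Taking the supremum over $B\ni x$ yields $\mathcal C_q(\mathcal K f)(x)\lesssim\mathcal C_q(f)(x)$ pointwise, whence the $L^p$ estimate for all $1\leq p\leq\infty$.

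\textbf{Main obstacle.} The technical core of the argument lies in the off-diagonal estimates for $\mathcal K$ in both the atomic step and the $f_2$-part of the Carleson estimate. Because $X$ is arbitrary, no vector-valued Calder\'on--Zygmund theory is available, so the estimates must rely exclusively on the pointwise kernel bound \eqref{esKts} and its summability along dyadic annuli and scales. The delicate part will be to balance the spatial decay rate $\kappa$ against the scale decay rate $\e$ so that the resulting double-dyadic geometric series converges uniformly, yielding bounds independent of the supporting ball of the atom (for the $T^1_q$ endpoint) and of the ambient ball $B$ (for the Carleson estimate).
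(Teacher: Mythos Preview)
Your handling of the $T^q_q$ endpoint, the $T^1_q$ endpoint via atomic decomposition, and the interpolation for $1\le p<q$ is essentially the paper's argument. Your near/far decomposition for the Carleson bound is a sound alternative; the paper instead dualises against $g$ in the unit ball of $L^{q'}(\widehat B;X^*)$ and invokes the $L^{q'}$--boundedness of $\mathcal K^*$, which is shorter but relies on the same $L^q$ estimate you already established.

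The one place where your route genuinely diverges, and where there is a gap, is the range $q<p<\infty$. You propose to interpolate between $T^1_q$ and some $T^{p_0}_q$ with $p_0>p$, obtaining the $T^{p_0}_q$ bound from the pointwise Carleson inequality together with \eqref{A-C}--\eqref{C-A}. The trouble is the direction $\|\A_q(\cdot)\|_{p_0}\lesssim\|\mathcal C_q(\cdot)\|_{p_0}$: as recorded in \eqref{C-A}, this carries the constant $q^{p_0/q}$, so your $T^{p_0}_q$ operator norm picks up an exponential factor in $p_0$. After interpolation with the $T^1_q$ endpoint (whose interpolation parameter $\theta$ tends to $1$ as $p\to\infty$) and the $p^{1/q}$ loss from Lemma \ref{Intpo}, you end up with a bound of order $q^{cp/q}$ rather than the claimed $p^{1/q}$. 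Since the explicit growth $p^{1/q}$ is part of the statement and is what the applications in Sections 4--6 require, this is not merely a cosmetic loss.

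The paper bypasses this obstacle by duality: for $p>q$ one has $p'<q'$, so the adjoint $\mathcal K^*$ (whose kernel satisfies the same bound \eqref{esKts}) is bounded on $T^{p'}_{q'}(\real^{d+1}_+;X^*)$ with constant $\lesssim_{\e,\kappa}1$ by the case already proved. Lemma \ref{dualtent} then gives
\[
\|\mathcal K f\|_{T^p_q(X)}\lesssim p^{1/q}\sup_g\Big|\int\langle \mathcal K f,g\rangle\Big|
= p^{1/q}\sup_g\Big|\int\langle f,\mathcal K^*g\rangle\Big|
\lesssim_{\e,\kappa} p^{1/q}\|f\|_{T^p_q(X)},
\]
with the supremum over $g$ in the unit ball of $T^{p'}_{q'}(X^*)$. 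This delivers the sharp $p^{1/q}$ directly. If you want to keep your interpolation scheme, you would need an improvement of \eqref{C-A} on the range $p>q$ with at most polynomial constant in $p$, which is not provided in the paper.
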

\begin{proof}
Fix $f \in C_c(\real^{d+1}_+)\otimes X$. Without loss of generality, we can assume $\kappa < \e$ from \eqref{esKts}.
We first deal with the case $p = q$. By H\"{o}lder's inequality, we have
\[
\begin{aligned}
    \int_{\real^{d+1}_+}\|\K(f)(x, t)\|_X^q\, \frac{\d y \d t}{t} & = \int_{\real^{d+1}_+}\norm{  \int_{\real^{d+1}_+} \K_{t, s}(y, w)f(w, s) \,\frac{\d w\d s}{s} }_X^q \, \frac{\d y \d t}{t} \\
    & \leq \int_{\real^{d+1}_+} \sk{ \int_{\real^{d+1}_+} |\K_{t, s}(y, w)|\, \frac{\d w \d s}{s} }^{\frac{q}{q'}}\\
    &\qquad \cdot\sk{ \int_{\real^{d+1}_+} |\K_{t, s}(y, w)|\|f(w, s)\|_X^q \,\frac{\d w\d s}{s}} \, \frac{\d y \d t}{t}.
\end{aligned}       
\]
We obtain that
\[
\begin{aligned}
    \int_0^{\infty}\int_{\real^d} |\K_{t, s}(y, w)|\, \frac{\d w \d s}{s} & \leq  \int_0^t\int_{\real^d} \frac{ Cs^{\e}t^{-\e} t^{-d} }{\sk{ 1 + t^{-1}|y - w| }^{d + \kappa}  } \,\frac{\d w\d s}{s} \\
    & \qquad + \int_t^{\infty}\int_{\real^d} \frac{ Ct^{\e}s^{-\e} s^{-d} }{\sk{ 1 + s^{-1}|y - w| }^{d + \kappa}  } \,\frac{\d w\d s}{s}\\
    & \lesssim_{\e, \kappa} \int_{\real^d} \frac{ C t^{-d} }{\sk{ 1 + t^{-1}|y - w| }^{d + \kappa}  } \,\d w + \int_t^{\infty} t^{\e}s^{-\e-1} \,\d s \\
    & \lesssim_{\e, \kappa} 1.
\end{aligned}
\]
It is clear that in the assumption of $\K_{t, s}(y, w)$, $(w, s)$ plays the same role as $(y, t)$. Thus
\begin{equation}\label{eTqtoq}
    \|  \K(f) \|_{L^q(\real^{d+1}_+; X)}^q \lesssim_{\e, \kappa} \int_{\real^{d+1}_+} \|f(w, s)\|_X^q \, \frac{\d w \d s}{s} = \| f \|_{L^{q}(\real^{d+1}_+; X)}^q.
\end{equation}
Then the case $p = q$ is done since $\| f \|_{T^q_q(X)} \approx {\| f \|_{L^q(\real^{d+1}_+; X)} }$. Moreover, from the proof we observe that $\K$ is always bounded on $L^p(\real^{d+1}_+; X)$ for $1 \leq p \leq \infty$. 

For $1 \leq p < q$, by the interpolation---Lemma \ref{Intpo}, it suffices to show the case $p=1$. By the atomic decomposition---Lemma \ref{atmicDectent}, It suffices to show that 
    \begin{equation}\label{pi_La}
        \| \K(a) \|_{T^1_{q}(X)} \lesssim_{\e, \kappa} 1,
    \end{equation}
    where $a$ is an $(X,q)$-atom with $\supp\, a \subset \wh{B}$ and $B = B(c_B, r_B)$.
One can write
\[
\begin{aligned}
     \|  \A_q[\K(a)] \|_{1} & = \int_{4B} \A_q[\K(a)](x) \,\d x + \int_{(4B)^C} \A_q[\K(a)](x) \,\d x \\
     & = I + II.
\end{aligned}
\]
From \eqref{eTqtoq} we obtain
\begin{equation}\label{e3.13}
    \|  \A_q[\K(a)] \|_{q}^q \lesssim_{\e, \kappa} \int_{\real^{d+1}_+} \|a(w, s)\|_X^q \, \frac{\d w \d s}{s} \leq |B|^{1-q}.
\end{equation}
Then we can estimate the term $I$:
\begin{equation}\label{esI}
    I \leq |4B|^{\frac{1}{q'}}\|  \A_q[\K(a)] \|_{q} \lesssim_{\e, \kappa} 1.
\end{equation}
Now we handle the second term $II$.
By H\"{o}lder's inequality, we observe
\[
\begin{aligned}
    \sk{ \A_q[\K(a)](x) }^q& \leq \int_0^{\infty}\int_{|y - x| < t} \sk{ \int_{\wh{B}} |\K_{t, s}(y, w)|^{q'}\, \frac{\d w \d s}{s} }^{\frac{q}{q'}}\cdot\sk{ \int_{\wh{B}} \|a(w, s)\|_X^q \,\frac{\d w\d s}{s}} \, \frac{\d y \d t}{t^{d+1}} \\
    & \lesssim |B|^{1-q}\int_0^{\infty}\int_{|y - x| < t} \sk{ \int_{\wh{B}}|\K_{t, s}(y, w)|^{q'}\, \frac{\d w\d s}{s} }^{\frac{q}{q'}}\, \frac{\d y \d t}{t^{d+1}} \\
    & = |B|^{1-q}\int_0^{r_B}\int_{|y - x| < t} \sk{ \int_{\wh{B}} |\K_{t, s}(y, w)|^{q'}\, \frac{\d w\d s}{s} }^{\frac{q}{q'}}\, \frac{\d y \d t}{t^{d+1}} \\
    & \qquad + |B|^{1-q}\int_{r_B}^{\infty}\int_{|y - x| < t} \sk{ \int_{\wh{B}} |\K_{t, s}(y, w)|^{q'}\, \frac{\d w\d s}{s} }^{\frac{q}{q'}}\, \frac{\d y \d t}{t^{d+1}} \\
    & =: J_1 + J_2.
\end{aligned}
\]
When $x \in (4B)^C,\ w \in B$, we have 
\[
    r_B < |x - w| \leq |x - y| + |y- w| < t+|y-w|,
\]
hence 
\[
    |x - c_B| \leq |x - w| +|w- c_B| < 2(t + |y-w|) \leq 2(\max\lk{t, s} + |y-w|).
\]
Therefore we observe from \eqref{esKts} that 
\[
\begin{aligned}
    |\K_{t, s}(y, w)| & \lesssim_{\e, \kappa} \frac{ \min\lk{ \frac{s}{t}, \frac{t}{s} }^{ \e } \min\lk{ \frac{1}{t}, \frac{1}{s} }^{d} }{ \sk{ \max\lk{t, s} + |y - w| }^{d + \kappa}\min\lk{\frac{1}{t}, \frac{1}{s}}^{d + \kappa } }\\
    & \lesssim_{\e, \kappa} \frac{ \min\lk{ \frac{s}{t}, \frac{t}{s} }^{ \e } \min\lk{ \frac{1}{t}, \frac{1}{s} }^{-\kappa} }{ |x - c_B|^{d + \kappa} } = \frac{ \min\lk{ s^\e t^{\kappa-\e}, t^\e s^{\kappa-\e} } }{ |x - c_B|^{d + \kappa} }.
\end{aligned}
\]
Then
    \[
    \begin{aligned}
        J_1 & \lesssim_{\e, \kappa} \frac{|B|^{1-q}|B|^{\frac{q}{q'}}}{|x - c_B|^{q(d + \kappa)}}\int_0^{r_B}\sk{ \int_0^{r_B}  \min\lk{ s^{q'\e} t^{q'(\kappa-\e)}, t^{q'\e} s^{q'(\kappa-\e)} } \, \frac{\d s}{s} }^{\frac{q}{q'}} \, \frac{ \d t}{t} \\
        & = \frac{ 1 }{|x - c_B|^{q(d + \kappa)} }\int_0^{r_B}\sk{ \int_0^{t} t^{q'(\kappa-\e)}s^{q'\e} \,\frac{\d s}{s} + \int_{t}^{r_B} s^{q'(\kappa- \e) }t^{q'\e} \,\frac{\d s}{s} }^{\frac{q}{q'}}\frac{\d t}{t}\\
        & \lesssim_{\e, \kappa} \frac{ r_B^{q\kappa} }{|x - c_B|^{q(d + \kappa)} }.
    \end{aligned}
    \]
    For $J_2$, since $t \geq  r_B \geq s$, and $|x - c_B| < 2(t + |y - w|)$,
    \[
    \begin{aligned}
        J_2 & \lesssim_{\e, \kappa} \frac{|B|^{1-q}|B|^{\frac{q}{q'}}  }{|x - c_B|^{q(d + \kappa)}} \int_{r_B}^{\infty} \sk{ \int_0^{r_B} s^{q' \e} t^{q'(\kappa-\e ) }  \, \frac{\d s}{s} }^{\frac{q}{q'}}\, \frac{ \d t}{t} \\
        & \lesssim_{\e, \kappa} \frac{ r_B^{q\kappa } }{|x - c_B|^{q(d + \kappa)} }.
    \end{aligned}
    \]
Thus
\[
    \A_q[\K(a)](x) \lesssim_{\e, \kappa} \frac{ r_B^{\kappa } }{|x - c_B|^{d + \kappa} }, \quad x \in (4B)^C.
\]
Since 
\[
\begin{aligned}
    \int_{(4B)^C} \frac{ r_B^{ \kappa }}{ |x - c_B|^{d + \kappa} } \,\d x  & = \sum_{m = 2}^{\infty} \int_{ 2^{m+1}B \setminus 2^{m}B }\frac{ r_B^{ \kappa } }{ |x - c_B|^{d + \kappa } } \,\d x \leq \sum_{m = 2}^{\infty} \int_{ 2^{m+1}B } \frac{ r_B^{ \kappa } }{ 2^{m(d + \kappa) }r_B^{d + \kappa} } \,\d x \\ 
    & \lesssim \sum_{m = 2}^{\infty}  \frac{ (2^{m+1}r_B)^d }{ 2^{m(d +  \kappa) }r_B^{d } }  \lesssim \sum_{m = 2}^{\infty} 2^{-m\kappa}   \lesssim_{\kappa} 1 ,
\end{aligned}
\]
we obtain
\begin{equation}\label{esII}
        II  \lesssim_{\e, \kappa} 1.
\end{equation}


For the case $q < p < + \infty$, we denote by $\K^*$ the adjoint operator. It is clear that the kernel of $\K^*$ has the same estimation as that of $\K$. For $f\in C_c(\real^{d+1}_+)\otimes X$, we obtain from Lemma \ref{dualtent} that
\[
\begin{aligned}
    \| \K(f) \|_{T^p_q(X)} & \lesssim p^{\frac{1}{q}}\sup_{g} \jdz{ \int_{\real^{d+1}_+} \lara{ \K(f)(x, t), g(x, t) }_{X \times X^*} \,\frac{\d x\d t}{t} } \\
    & = p^{\frac{1}{q}}\sup_{g} \jdz{ \int_{\real^{d+1}_+} \lara{ f(x, t), \K^*(g)(x, t) }_{X \times X^*} \,\frac{\d x\d t}{t} } \\
    & \leq p^{\frac{1}{q}}\| f \|_{T^p_q(X)} \| \K^*(g) \|_{T^{p'}_{q'}(X^*)} \lesssim p^{\frac{1}{q}} \| f \|_{T^p_q(X)},
\end{aligned}
\]
where the supremum is taken over all $g \in C_c(\real^{d+1}_+)\otimes X$ such that $\| \A_{q'}(g) \|_{p'} \leq 1$. Consequently, we observe that $\K$ extends to a bounded linear operator on $T^p_q(\real^{d+1}_+; X)$ for $1 \leq p < \infty$. More precisely,
\[
    \| \K(f) \|_{T^p_q(X)} \lesssim_{\e, \kappa} p^{\frac{1}{q}}\| f \|_{T^p_q(X)}, \quad \forall\, f \in T^p_q(\real^{d+1}_+; X).
\]

Now we prove the second assertion of this lemma. Fix $f \in C_c(\real^{d+1}_+)\otimes X$, take a ball $B$ in $\real^d$, we can write
\[
\begin{aligned}
    \sk{\int_{\wh{B}} \| \K(f)(x, t) \|_X^q \,\frac{\d x\d t}{t}}^{\frac{1}{q}} & = \sup_{g} \jdz{\int_{\wh{B}} \lara{ \K(f)(x, t), g(x, t) }_{X \times X^*} \,\frac{\d x\d t}{t} }\\
    & = \sup_{g}\jdz{ \int_{\wh{B}} \lara{ f(x, t), \K^*(g)(x, t) }_{X \times X^*} \,\frac{\d x\d t}{t}} \\
    & \leq \sup_g \| \K^*(g
) \|_{L^{q'}(\wh{B}; X^*)} \sk{ \int_{\wh{B}} \| f(x, t) \|_X^q \,\frac{\d x\d t}{t} }^{\frac{1}{q}},
\end{aligned}
\]
where the supremum is taken over all $g$ in the unit ball of $L^{q'}(\wh{B}; X^*)$. From \eqref{eTqtoq} we know that
\[
    \|\K^*(g) \|_{L^{q'}(\real^{d+1}_+; X^*)} \lesssim_{\e, \kappa} \| g \|_{L^{q'}(\wh{B}; X^*)}. 
\]
Thus for any $x \in \real^d$,
\[
\begin{aligned}
    \sk{ \mathcal{C}_q\mk{\K(f)}(x) }^q & = \sup_{x \in B} \frac{1}{|B|} \int_{\wh{B}} \| \K(f)(x, t) \|_X^q \,\frac{\d x\d t}{t} \\
    & \lesssim_{\e, \kappa} \sup_{x \in B} \frac{1}{|B|}\int_{\wh{B}} \| f(x, t) \|_X^q \,\frac{\d x\d t}{t} = \sk{ \mathcal{C}_q(f)(x) }^q.
\end{aligned}
\]
Therefore we obtain
\[
    \| \mathcal{C}_q\mk{\K(f)} \|_p \lesssim_{\e, \kappa} \| \mathcal{C}_q\sk{f} \|_p, \quad 1 \leq p \leq \infty.
\]
Moreover, from \eqref{A-C} we also observe
\[
    \| \mathcal{C}_{q}\mk{\K(f)} \|_{p}  \lesssim_{\e, \kappa} \sk{\frac{p}{p-q}}^{\frac{1}{q}} \| \A_{q}(f) \|_{p}, \quad q < p \leq \infty.
\]
The proof is completed.
\end{proof}

Now we come to the second important linear operator, which will relate the tent space $T^p_q(\real^{d+1}_+; X)$ to the Hardy space $H^p_{q, L}(\real^d; X)$.


Recall the operator $\Q(f)(x, t) = -tLe^{-tL}(f)(x)$. Define the operator $\pi_L$ acting on $C_c(\real^{d+1}_+)\otimes X$ as
\[
    \pi_L(f)(x) = \int_{0}^{\infty} \Q (f(\cdot, t))(x, t) \,\frac{\d t}{t}, \quad \forall\, x \in \real^d.
\]
It is easy to verify that $\pi_L$ is well-defined. 
The following lemma asserts that $\pi_L$ extends to a bounded linear operator from $T^p_q(\real^{d+1}_+; X)$ to $H^p_{q, L}(\real^d; X)$. We will denote it by $\pi_L$ as well. 

\begin{lemma}\label{l3.8}
Let $X$ be any fixed Banach space and $1<q<\infty$. The operator $\pi_L$ initially defined on $C_c(\real^{d+1}_+)\otimes X$ extends to a bounded linear operator from $T^p_q(\real^{d+1}_+; X)$ to $H^p_{q, L}(\real^d; X)$ for $1 \leq p < \infty$. More precisely,
    \[
        \| \pi_L(f) \|_{H^p_{q, L}(X)} \lesssim_{\beta} p^{\frac{1}{q}}\| f \|_{T^p_q(X)}, \quad \forall\, f \in T^p_q(\real^{d+1}_+;X), \ 1 \leq p < \infty.
    \]
    Furthermore, for any $f \in C_c(\real^{d+1}_+)\otimes X$, we have
    \[
        \| \pi_L(f) \|_{BMO^p_{q, L}(X)} \lesssim_{\beta} \| \mathcal{C}_q(f) \|_{p}, \quad 1 \leq p \leq \infty.
    \]
\end{lemma}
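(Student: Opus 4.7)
The plan is to reduce both assertions to Lemma~\ref{BoundTent} by realizing $\mathcal{Q}\circ\pi_L$ as an integral operator $\mathcal{K}$ of the type considered there. Concretely, for $f\in C_c(\real^{d+1}_+)\otimes X$, I would first interchange $\mathcal{Q}=-sLe^{-sL}$ with the $t$-integral defining $\pi_L$ (legitimate since $f$ has compact support away from $\{t=0\}$ and $\{t=\infty\}$, and the kernels decay) to obtain
\[
\mathcal{Q}(\pi_L f)(x,s)=\int_0^\infty\int_{\real^d}\mathcal{K}_{s,t}(x,y)\,f(y,t)\,\d y\,\frac{\d t}{t},
\]
where $\mathcal{K}_{s,t}(x,y)$ is the integral kernel of the operator $sLe^{-sL}\circ tLe^{-tL}$. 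In the notation of Lemma~\ref{BoundTent}, this is precisely $\mathcal{K}(f)$, so once $\mathcal{K}_{s,t}$ is shown to satisfy \eqref{esKts}, both conclusions will follow at once, after extending by density via Lemma~\ref{loc}.

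The crux is the kernel estimate. My plan is to exploit the operator identity
\[
sLe^{-sL}\circ tLe^{-tL}=\frac{st}{(s+t)^2}\,(s+t)^2L^2e^{-(s+t)L},
\]
together with the fact that $u^2L^2e^{-uL}$ has integral kernel $u\partial_u(u\partial_u K)(u,x,y)=u^2\partial_u^2 K(u,x,y)+u\partial_u K(u,x,y)$, each term of which is controlled by $c\,u^\beta/(u+|x-y|)^{d+\beta}$ thanks to Remark~\ref{r2.1}. Setting $u=s+t$ yields
\[
|\mathcal{K}_{s,t}(x,y)|\lesssim_\beta\frac{st}{(s+t)^2}\cdot\frac{(s+t)^\beta}{(s+t+|x-y|)^{d+\beta}}.
\]
Using the elementary inequalities $st/(s+t)^2\leq\min\{s/t,t/s\}$ and $s+t\approx\max\{s,t\}=1/\min\{1/s,1/t\}$, this matches \eqref{esKts} with $\e=1$ and $\kappa=\beta$.

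With \eqref{esKts} in hand, Lemma~\ref{BoundTent} gives directly $\|\mathcal{K}(f)\|_{T^p_q(X)}\lesssim_\beta p^{1/q}\|f\|_{T^p_q(X)}$ for $1\leq p<\infty$ and $\|\mathcal{C}_q(\mathcal{K}(f))\|_p\lesssim_\beta\|\mathcal{C}_q(f)\|_p$ for $1\leq p\leq\infty$. Since $\|\pi_L f\|_{H^p_{q,L}(X)}=\|\mathcal{Q}(\pi_L f)\|_{T^p_q(X)}=\|\mathcal{K}(f)\|_{T^p_q(X)}$ and similarly $\|\pi_L f\|_{BMO^p_{q,L}(X)}=\|\mathcal{C}_q(\mathcal{K}(f))\|_p$, both claims follow. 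The main obstacle I anticipate is isolating the off-diagonal decay factor $\min\{s/t,t/s\}^\e$ required by \eqref{esKts}; the key is that it is produced automatically by the cancellation between the two $\mathcal{Q}$-projections, which leaves the extra prefactor $st/(s+t)^2$ after one invokes the semigroup law --- everything else is bookkeeping.
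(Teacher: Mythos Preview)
Your proposal is correct and follows essentially the same route as the paper: recognize $\mathcal{Q}\circ\pi_L$ as an integral operator whose kernel is that of $tsL^2e^{-(t+s)L}$, bound it via the derivative estimates of Remark~\ref{r2.1} to verify~\eqref{esKts}, and then invoke Lemma~\ref{BoundTent}. One small slip: the kernel of $u^2L^2e^{-uL}$ is simply $u^2\partial_u^2 K(u,x,y)$, not $u\partial_u(u\partial_u K)$ (the latter corresponds to $(u\partial_u)^2 e^{-uL}=-uLe^{-uL}+u^2L^2e^{-uL}$); this does not affect your bound since $u^2\partial_u^2 K$ is already covered directly by~\eqref{derpt} with $k=2$.
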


\begin{proof}
Let $f\in C_c(\real^{d+1}_+)\otimes X$. Recall that $k(t, x, y)$ is the kernel of the operator $\Q$, then
\begin{equation}
\begin{split}
    \Q[\pi_L(f)](x, t) & = \int_{\real^d} k(t, y, z)\pi_L(f)(z) \,\d z  \\
    & = \int_{\real^d} k(t, y, z) \sk{ \int_{\real^{d+1}_+} k(s, z, w)f(w, s) \,\frac{\d w\d s}{s} } \,\d z  \\
    & = \int_{\real^{d+1}_+} \sk{ \int_{\real^d} k(t, y, z) k(s, z, w) \, \d z }f(w, s) \,\frac{\d w\d s}{s}.
\end{split}
\end{equation}
We denote by
\[
    \Phi_{t, s}(y, w) = \int_{\real^d} k(t, y, z) k(s, z, w) \, \d z. 
\]
Note that $k(t, \cdot, \cdot)$ is the kernel of the operator $\Q=-te^{-tL}$, thus $\Phi_{t,s}(\cdot,\cdot)$ is the kernel of $-tLe^{-tL}\circ(-sLe^{-sL})=tsL^2e^{-(t+s)L}$. On the other hand, $\partial^2_r (e^{-rL})|_{r=t+s}=L^2e^{-(t+s)L}$ which has the kernel $\partial^2_r K(r,\cdot,\cdot)|_{r=t+s}$. Then by \eqref{derpt}, we obtain 
\begin{align*}
    |\Phi_{t, s}(y, w)| \lesssim_{d, \beta} \frac{ts}{(t+s)^{2-\beta}(t+s+|y - w|)^{d+ \beta} }  \lesssim_{\beta} \frac{ \min\lk{\frac{s}{t},  \frac{t}{s}} \min\lk{ \frac{1}{t}, \frac{1}{s} }^d }{\sk{ 1 + \min\lk{ \frac{1}{t}, \frac{1}{s} }|x - y| }^{d + \beta}  }.
\end{align*}
Denote by 
\begin{align}\label{Proj}
    \mathcal{P} = 4 \mathcal Q \circ \pi_L. 
\end{align}
 From Lemma \ref{BoundTent}, we conclude that $\mathcal{P}$ initially defined on $C_c(\real^{d+1}_+)\otimes X$ extends to a bounded linear operator on $T^p_q(\real^{d+1}_+; X)$. Moreover,
\[
    \norm{  \mathcal{P}(f) }_{T^p_q(X)} \lesssim_{\beta} p^{\frac{1}{q}}\norm{ f}_{T^p_q(X)}.
\]
Therefore
\[
    \| \pi_L(f) \|_{H^p_{q, L}} = 4^{-1}\| \mathcal{P}(f) \|_{T^p_{q}(X)} \lesssim_{\beta} p^{\frac{1}{q}}\| 
    f \|_{T^p_q(X)},
\]
which is the desired assertion.

For the second part, we obtain the desired assertion from Lemma \ref{BoundTent} immediately.
\end{proof}

\begin{remark}\label{extend:PiL}
    One can verify that $\mathcal{P} \circ \mathcal{P} = \mathcal{P}$, thus $\mathcal{P}$ serves as a continuous projection from $T^p_q\sk{\real^{d+1}_+; X}$ onto itself. Indeed, we can also obtain this lemma under the assumption that $L$ is a sectotrial operator satisfying only \eqref{pt1}.
\end{remark}

\section{vector-valued intrinsic square functions}
In this section, we begin with the introduction of vector-valued intrinsic square functions, originally presented by Wilson in \cite{Wilson2008} in the case of convolution operators. We then proceed to compare them with the $q$-variant of Lusin area integral associated with a generator $L$.

Recall that $L$ is assumed to be a sectorial operator of type $\alpha$ ($0 \leq \alpha < \pi/2$) satisfying assumptions \eqref{pt10}, \eqref{pt2} and \eqref{pt3} with $ \beta > 0, 0 < \gamma \leq 1$. Define $ \mathcal{H}_{\gamma,\beta} $ as the family of functions $ \f : \real^d \times \real^d \to \real $ such that
\begin{equation}\label{Nicef1}
  |\f(x,y)| \leq \frac{1}{(1 + |x-y|)^{d+\beta}},
\end{equation}
\begin{equation}\label{Nicef2}
  | \f(x+h,y) - \f(x,y) | + | \f(x,y+h) - \f(x,y) | \leq \frac{|h|^\gamma  }{(1 + |x-y|)^{d+\beta+\gamma}}
\end{equation}
whenever $ 2|h| \leq 1 + |x-y| $ and
\begin{equation}\label{Nicef3}
  \int_{\real^d} \f(x,y)\,\d x = \int_{\real^d} \f(x,y)\,\d y = 0.
\end{equation}
For $\f\in \mathcal{H}_{\gamma,\beta} $, define $ \f_t(x,y) = t^{-d}\f(t^{-1}x,t^{-1}y) $.

Let $ f \in C_c(\real^d) \ot X$. We define
\[
    A_{\gamma,\beta} (f)(x,t) = \sup_{\f \in \mathcal{H}_{\gamma,\beta} } \norm{ \int_{\real^d} \f_t(x,y)f(y)\,\d y }_X, \quad \forall\, (x, t) \in \real^{d + 1}_+.
\]
Then the intrinsic square functions of $f$ are defined as
\[
S_{q,\gamma,\beta}(f)(x) = \sk{ \int_{\Gamma(x)} \sk{A_{\gamma,\beta}(f)(y,t)}^q \,\frac{\d y\d t}{t^{d+1}}  }^{\frac{1}{q}},
\]
and
\[
G_{q,\gamma,\beta}(f)(x) = \sk{ \int_{0}^{\infty} \sk{A_{\gamma,\beta}(f)(x,t)}^q \,\frac{\d t}{t}  }^{\frac{1}{q}}.
\]


\begin{thm}\label{equiv}
Let $X$ be any fixed Banach space, $1<q<\infty$ and $1\leq p<\infty$. Let $L$ be any fixed sectorial operator $L$ satisfying \eqref{pt10}, \eqref{pt2} and \eqref{pt3}. For any $f\in C_c(\mathbb R^d)\otimes X$, we have 
 \begin{align}\label{SG2}
  S_{q, \gamma,\beta}(f)(x) \approx_{\gamma, \beta} G_{q, \gamma,\beta}(f)(x),
 \end{align}

\begin{align}\label{Square Wilson}
    S_{q, L}(f)(x) \lesssim  S_{q, \gamma,\beta}(f)(x), \quad G_{q, L}(f)(x) \lesssim  G_{q, \gamma,\beta}(f)(x),
\end{align}
 and 
   \begin{align}\label{Wilson Square}
\| S_{q, \gamma,\beta}(f) \|_p \lesssim_{\gamma,\beta} p^{\frac{1}{q}}\| S_{q,L}(f) \|_{p}.
  \end{align}
\end{thm}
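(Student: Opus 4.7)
The plan is to prove the three assertions \eqref{SG2}, \eqref{Square Wilson}, and \eqref{Wilson Square} in order, with the reverse $L^p$-bound \eqref{Wilson Square} being the main content. For \eqref{SG2}, I would first show the pointwise comparison $A_{\gamma,\beta}(f)(x,t) \approx_{\gamma,\beta} A_{\gamma,\beta}(f)(y,t)$ whenever $|x-y|<t$. Given $\f\in \mathcal{H}_{\gamma,\beta}$, setting $\widetilde{\f}(u,v):=\f(u+(x-y)/t,v)$, the condition $|(x-y)/t|<1$ ensures that the size and smoothness of $\f$ pass to $\widetilde{\f}$ up to a universal constant, while the two-sided cancellation is preserved exactly by translation, so $\widetilde{\f}$ is a constant multiple of an element of $\mathcal{H}_{\gamma,\beta}$. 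The identity $\int \f_t(x,z)f(z)\,\d z = \int \widetilde{\f}_t(y,z)f(z)\,\d z$ then yields the comparison, from which \eqref{SG2} follows by averaging $A_{\gamma,\beta}(f)(y,t)^q$ over $|y-x|<t$.

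For \eqref{Square Wilson}, I would verify that, for each $t>0$, the rescaled kernel $\psi^{(t)}(u,v) := c^{-1}t^d k(t,tu,tv)$ lies in $\mathcal{H}_{\gamma,\beta}$, where $c$ is the fixed constant from Lemma \ref{2.8}. Its size and H\"older regularity come directly from Lemma \ref{2.8}(i), while the two-sided cancellation $\int_{\real^d} k(t,x,y)\,\d x = \int_{\real^d} k(t,x,y)\,\d y = 0$ follows from $k = t\partial_t K$ together with \eqref{pt3}. Since $k(t,x,y) = c(\psi^{(t)})_t(x,y)$, the supremum over $\mathcal{H}_{\gamma,\beta}$ gives the pointwise bound $\norm{tLe^{-tL}f(y)}_X \leq c\,A_{\gamma,\beta}(f)(y,t)$, from which \eqref{Square Wilson} is immediate.

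The heart of the proof will be \eqref{Wilson Square}. I would use the Calder\'on reproducing formula \eqref{funcal1} to write, for each $\f\in\mathcal{H}_{\gamma,\beta}$,
\[ \int_{\real^d} \f_t(y,z) f(z)\,\d z = 4\int_0^{\infty}\int_{\real^d} K^\f_{t,s}(y,w)\,(sLe^{-sL}f)(w)\,\d w\,\frac{\d s}{s}, \quad K^\f_{t,s}(y,w) := \int_{\real^d} \f_t(y,z)k(s,z,w)\,\d z. \]
The crux is the uniform-in-$\f$ kernel estimate
\[ \jdz{K^\f_{t,s}(y,w)} \lesssim_{\gamma,\beta} \frac{\min\lk{\tfrac{s}{t},\tfrac{t}{s}}^{\e}\,\min\lk{\tfrac{1}{t},\tfrac{1}{s}}^{d}}{\sk{1 + \min\lk{\tfrac{1}{t},\tfrac{1}{s}}|y-w|}^{d+\kappa}} \]
for some $\e,\kappa>0$, which is precisely of the form \eqref{esKts}. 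For $s\leq t$, I would use $\int k(s,z,w)\,\d z = 0$ to rewrite the inner integral as $\int [\f_t(y,z)-\f_t(y,w)]k(s,z,w)\,\d z$ and invoke the H\"older regularity of $\f_t$ in the second variable; for $s\geq t$, I would use $\int \f_t(y,z)\,\d z = 0$ to subtract $k(s,y,w)$ and exploit the H\"older regularity of $k$ from Lemma \ref{2.8}(i). A splitting with respect to $|z-w|$ in each regime yields the desired off-diagonal decay. Taking the supremum over $\f$ and pulling $\norm{\cdot}_X$ inside the integral gives $A_{\gamma,\beta}(f)(y,t) \leq 4\,\bar{\K}(g)(y,t)$, where $g(w,s) := \norm{(sLe^{-sL}f)(w)}_X$ and $\bar{\K}$ is the scalar integral operator with kernel equal to the upper bound above. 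Applying Lemma \ref{BoundTent} to $\bar{\K}$ in the case $X=\real$ then delivers $\norm{A_{\gamma,\beta}(f)}_{T^p_q} \lesssim_{\gamma,\beta} p^{1/q}\norm{g}_{T^p_q} = p^{1/q}\norm{S_{q,L}(f)}_p$, which is \eqref{Wilson Square}. The main obstacle will be the uniform kernel estimate for $K^\f_{t,s}$: although standard in flavor, it demands a careful balancing of the size/smoothness/cancellation data of $\mathcal{H}_{\gamma,\beta}$ against those in Lemma \ref{2.8}, and forces $\kappa$ to be chosen strictly smaller than both smoothness indices $\gamma$ and $\beta$.
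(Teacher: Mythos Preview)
Your proposal is correct and follows essentially the same route as the paper: \eqref{SG2} and \eqref{Square Wilson} via the translation stability of $\mathcal{H}_{\gamma,\beta}$ and the membership of the rescaled kernel $k(t,\cdot,\cdot)$ in $\mathcal{H}_{\gamma,\beta}$, and \eqref{Wilson Square} via the Calder\'on identity, the uniform kernel bound on $K^\f_{t,s}=\mathcal{L}^\theta_{t,s}$ (which is exactly Lemma~\ref{lem:thetakestimates}), and Lemma~\ref{BoundTent} applied in the scalar case. The only cosmetic difference is that the paper routes the last step through the projection $\pi_L$ (writing $f=4\pi_L(\mathcal{Q}f)$ and bounding $A_{\gamma,\beta}[\pi_L(h)]$), which is the same computation and has the side benefit of yielding the BMO version in Remark~\ref{equiv-BMO}; your remark that $\kappa$ must be smaller than both $\gamma$ and $\beta$ is slightly off---in the paper's argument the spatial decay exponent comes out as $\kappa=\beta/2$, independent of $\gamma$, while $\gamma$ enters only through the $\min\{s/t,t/s\}$ gain.
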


\begin{remark}
The following $g$-function version of \eqref{Wilson Square} holds also
   \begin{align}\label{Wilson G}
\| G_{q, \gamma,\beta}(f) \|_p \lesssim_{\gamma,\beta} {p^{\frac{2}{q}}  }\| G_{q,L}(f) \|_{p}.
  \end{align}
But its proof is much more involved and depends in turn on Theorem \ref{main2} that will be concluded in the next section. 
\end{remark}

As in the classical case \cite{Wilson2007}, the assertions \eqref{SG2} and \eqref{Square Wilson} can be deduced easily from the following facts on $ \mathcal{H}_{\gamma,\beta}$. 

\begin{lemma}\label{lem:facts}
Let $\f \in \mathcal{H}_{\gamma, \beta}$. The following properties hold:

{\rm (i)} if $t \geq 1$, then $t^{-d-\gamma}\f_t \in \mathcal{H}_{\gamma, \beta}$;

{\rm(ii)} if $|z| \leq 1$, $t \geq 1$, then $ (2t)^{-d-\gamma-\beta} \sk{ \f^{(z)} }_{t} \in \mathcal{H}_{\gamma, \beta} $, where $ \f^{(z)}(x,y) = \f(x-z,y) $.
\end{lemma}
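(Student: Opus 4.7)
My plan is to verify the three defining conditions \eqref{Nicef1}, \eqref{Nicef2}, \eqref{Nicef3} for $\mathcal{H}_{\gamma,\beta}$ directly for each of the two candidates, exploiting the scaling identity $1+t^{-1}|x-y| = t^{-1}(t+|x-y|)$ together with the elementary inequalities $1+|x-y| \leq t+|x-y| \leq t(1+|x-y|)$ that hold whenever $t \geq 1$. These let me trade the ``dilated'' bounds built into $\f_t$ for the ``unit-scale'' bounds defining $\mathcal{H}_{\gamma,\beta}$, at the price of explicit powers of $t$ which the normalization factors in (i) and (ii) are tuned to absorb.

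For part (i), I would set $\psi(x,y) := t^{-d-\gamma}\f_t(x,y) = t^{-2d-\gamma}\f(t^{-1}x,t^{-1}y)$. Applying \eqref{Nicef1} for $\f$ at the rescaled arguments and invoking $(t+|x-y|)^{-(d+\beta)} \leq (1+|x-y|)^{-(d+\beta)}$ yields
\[
    |\psi(x,y)| \leq \frac{t^{-d-\gamma+\beta}}{(1+|x-y|)^{d+\beta}};
\]
since $\beta \leq 1 < d+\gamma$ and $t \geq 1$, the prefactor is bounded by $1$. For \eqref{Nicef2}, the hypothesis $2|h|\leq 1+|x-y|$ combined with $t \geq 1$ forces $2|t^{-1}h| \leq 1 + t^{-1}|x-y|$, so \eqref{Nicef2} for $\f$ applies with increment $t^{-1}h$ at the point $(t^{-1}x, t^{-1}y)$; identical bookkeeping then produces the required estimate. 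The cancellation \eqref{Nicef3} for $\psi$ falls out of the substitution $u = t^{-1}x$ and the cancellation of $\f$.

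For part (ii), I would set $\psi(x,y) := (2t)^{-d-\gamma-\beta}t^{-d}\f(t^{-1}x-z,t^{-1}y)$. After applying \eqref{Nicef1} for $\f$ and rescaling as above, the key new ingredient I need is a lower bound $t+|x-tz-y| \gtrsim 1+|x-y|$, which I would establish by a two-case split: when $|x-y| \leq 2t$ the summand $t$ alone gives $t+|x-tz-y|\geq t \geq (1+|x-y|)/3$; when $|x-y| > 2t$, the triangle inequality combined with $|tz| \leq t$ yields $|x-tz-y| \geq |x-y|-t \geq |x-y|/2$, hence $t+|x-tz-y| \geq (1+|x-y|)/2$. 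The resulting absolute constant (along with a factor $t^{\beta}$ from the same dilation step) is precisely what the normalization $(2t)^{-d-\gamma-\beta}$ is designed to kill, using also $t^{-d-\gamma} \leq 1$ for $t \geq 1$. Condition \eqref{Nicef2} for $\psi$ is checked by the same two-case estimate applied to the regularity \eqref{Nicef2} of $\f$, and \eqref{Nicef3} is unchanged because translation in the first variable preserves the vanishing of the integral: $\int \f(t^{-1}x-z, t^{-1}y)\,\d x = t^d\int \f(u, t^{-1}y)\,\d u = 0$.

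The only step that is not pure rescaling is the two-case lower bound $t+|x-tz-y| \gtrsim 1+|x-y|$ in part (ii); once this is in hand, the entire lemma reduces to bookkeeping of $t$-powers, with the specific normalizations $t^{-d-\gamma}$ and $(2t)^{-d-\gamma-\beta}$ being exactly the ones that neutralize all the worst-case prefactors produced along the way.
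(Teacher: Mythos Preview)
Your proof of (i) is correct and matches what the paper leaves implicit. For (ii) your direct verification is sound in spirit, but differs from the paper, which proceeds more modularly: it first shows $2^{-d-\beta-\gamma}\f^{(z)}\in\mathcal{H}_{\gamma,\beta}$ at unit scale via the one-line triangle-inequality estimate
\[
    2^{-1}(1+|x-y|)\leq 1+|(x-z)-y|\leq 2(1+|x-y|)\qquad(|z|\leq 1),
\]
and then simply applies (i) to this new element of $\mathcal{H}_{\gamma,\beta}$. This factorisation ``translate, then dilate'' avoids reworking the dilation estimates and makes the constant tracking transparent.

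One concrete issue with your route: the two-case split you propose yields only $t+|x-tz-y|\geq \tfrac{1}{3}(1+|x-y|)$, and plugging this into the size bound gives a prefactor $(3/2)^{d+\beta}2^{-\gamma}t^{-d-\gamma}$, which exceeds $1$ for $t$ near $1$ and generic $d,\beta,\gamma$. So the normalisation $(2t)^{-d-\gamma-\beta}$ does \emph{not} kill it as you claim. The fix is to drop the case split altogether: from $|x-y|\leq |x-tz-y|+|tz|\leq |x-tz-y|+t$ one gets directly $t+|x-tz-y|\geq \tfrac{1}{2}(t+|x-y|)\geq \tfrac{1}{2}(1+|x-y|)$, and with this constant $1/2$ your computation goes through exactly (this is the same inequality the paper uses, just written at scale $t$). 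Either adopt this sharper bound, or follow the paper's cleaner two-step reduction.
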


\begin{proof}
The proof is similar to the case of Wilson \cite{Wilson2007}, while the present setting is non-convolutive, let us  give the sketch. The claim (i) is easy by definition. For the claim (ii), notice that
\[
    2^{-1} (1 + |x-y|) \leq  1 + |(x-z) - y| \leq 2(1 + |x- y|). 
\]
By definition, we have
\[
    |\f^{(z)}(x, y)| = | \f(x-z, y) | \leq \frac{1}{(1 + |(x-z) - y |)^{d+\beta} } \leq  \frac{2^{d+\beta}}{(1 + |x- y|)^{d+\beta}}.
\]
and 
\[
\begin{aligned}
    |\f^{(z)}(x + h, y) - \f^{(z)}(x, y)| & = | \f(x-z + h, y) - \f(x-z, y) | \\
    & \leq \frac{|h|^{\gamma}}{(1 + |(x-z) - y |)^{d+\beta + \gamma} } \\
    & \leq  \frac{2^{d+\beta + \gamma}|h|^{\gamma}}{(1 + |x- y|)^{d+\beta+\gamma}}.
\end{aligned}
\]
The same H\"{o}lder continuity estimation holds for the variable $y$. Thus we obtain $2^{-d-\beta-\gamma}\f^{(z)} \in \mathcal{H}_{\gamma, \beta}$. Then the claim (ii) follows from the claim (i). 
\end{proof}

With Lemma \ref{lem:facts}, the assertions \eqref{SG2} and \eqref{Square Wilson} will follow easily.
The most challenging part of Theorem \ref{SG2} lies in \eqref{Wilson Square}. In addition to the interpolation and duality theory on the (vector-valued) tent space that have been built in Section \ref{Tentspace}, the following pointwise estimate is another technical part in the proof of estimate \eqref{Wilson Square}.

Recall that $k(t, x, y)$ is the kernel of the operator $\Q$.  Let ${\theta \in \mathcal{H}_{\gamma, \beta}}$, define
 \[
        \L^\theta_{t, s}(y, w) = \int_{\real^d}\theta_t(y, z)k(s, z, w)\,\d z.
    \]

\begin{lemma}\label{lem:thetakestimates}
Let $\nu = 2^{-1}\min\lk{ \gamma, \beta }$ and $ \zeta = (d + 2^{-1}\beta )(d + \beta)^{-1}$, then
   \[
        \sup_{ \theta \in \mathcal{H}_{\gamma, \beta} }|\L^\theta_{t, s}(y, w)| \lesssim_{\gamma, \beta}  \frac{ \min\lk{ \frac{s}{t}, \frac{t}{s} }^{ (1 - \zeta)\nu } \min\lk{ \frac{1}{t}, \frac{1}{s} }^d }{ \sk{ 1 + \min\lk{ \frac{1}{t}, \frac{1}{s} }|y - w|}^{d + \frac{1}{2}\beta} }. 
    \]    
\end{lemma}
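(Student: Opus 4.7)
The plan is to interpolate geometrically between a pure size estimate and a cancellation estimate that extracts the oscillation factor $\rho^{\nu}$, where $\rho=\min\{s/t, t/s\}$ and $r=\min\{1/t, 1/s\}$. By the symmetry that swaps the roles of $\theta_t$ and $k(s,\cdot,\cdot)$---both of which enjoy cancellation in the integration variable, namely $\int\theta_t(y,z)\,\d z=0$ by \eqref{Nicef3} and $\int k(s,z,w)\,\d z=0$ obtained by differentiating $\int K(s,z,w)\,\d z=1$ in $s$---it suffices to handle the case $t\le s$, in which $\rho=t/s$ and $r=1/s$.

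First I would establish the size bound
\[
|\mathcal{L}^\theta_{t,s}(y,w)|\lesssim_{\gamma,\beta}\frac{r^d}{(1+r|y-w|)^{d+\beta}}
\]
via a routine Schur-type estimate on $\int |\theta_t(y,z)||k(s,z,w)|\,\d z$, splitting the $z$-integration at $|y-z|=|y-w|/2$; since $s\ge t$, the slower-decaying scale $s$ controls the output.

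Next, using $\int\theta_t(y,z)\,\d z=0$ to rewrite
\[
\mathcal{L}^\theta_{t,s}(y,w)=\int \theta_t(y,z)[k(s,z,w)-k(s,y,w)]\,\d z,
\]
I would prove the cancellation bound $|\mathcal{L}^\theta_{t,s}(y,w)|\lesssim_{\gamma,\beta}\rho^{\nu}r^d$ by splitting at $|y-z|=s/2$. On the inner region $|y-z|\le s/2$, interpolating the H\"{o}lder regularity of $k$ from Lemma~\ref{2.8}(i) with its size bound yields $|k(s,z,w)-k(s,y,w)|\lesssim (|y-z|/s)^{\nu}s^{-d}$, and hence $\int|\theta_t(y,z)|(|y-z|/s)^{\nu}\,\d z\lesssim\rho^{\nu}$ (the condition $\nu<\beta$ ensures convergence of $\int|u|^{\nu}(1+|u|)^{-d-\beta}\,\d u$). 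On the outer region $|y-z|>s/2$ the H\"{o}lder continuity of $k$ no longer applies; here $t^{-1}|y-z|\ge 1/(2\rho)$, and the elementary inequality $(1+a)^{d+\beta}\ge a^{\nu}(1+a)^{d+\beta-\nu}$ gives
\[
|\theta_t(y,z)|\le \frac{(2\rho)^{\nu}t^{-d}}{(1+t^{-1}|y-z|)^{d+\beta-\nu}},
\]
which, combined with $|k(s,y,w)|\lesssim r^d$ and $\int|k(s,z,w)|\,\d z\lesssim 1$, contributes $\lesssim\rho^{\nu}r^d$.

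The two bounds then combine via the geometric mean $|\mathcal{L}^\theta_{t,s}|\le A^{\zeta}B^{1-\zeta}$: the choice $\zeta=(d+\beta/2)/(d+\beta)$ makes the decay exponent exactly $(d+\beta)\zeta=d+\beta/2$ and produces the prefactor $\rho^{(1-\zeta)\nu}$, which is the advertised bound. The main obstacle in this plan is the outer region of the cancellation estimate: without access to H\"{o}lder continuity of $k$, the small factor $\rho^{\nu}$ must be squeezed purely from the tail decay of $\theta_t$; the prescribed choice $\nu=\tfrac{1}{2}\min\{\gamma,\beta\}$ is essentially forced by the twin constraints $\nu\le\gamma$ (to interpolate the H\"{o}lder continuity of $k$) and $\nu<\beta$ (for tail integrability against $\theta_t$).
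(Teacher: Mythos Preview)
Your approach is correct and follows the same overall strategy as the paper: obtain a pure size estimate and a cancellation estimate with the oscillation factor $\rho^{\nu}$, then interpolate with exponent $\zeta$. The paper, however, treats the complementary case $s\le t$: it uses the vanishing of $\int k(s,z,w)\,\d z$ to subtract $\theta_t(y,w)$ and then invokes the H\"older regularity of $\theta_t$; you handle $t\le s$ by subtracting $k(s,y,w)$ via $\int\theta_t(y,z)\,\d z=0$ and invoking the H\"older regularity of $k$. By the symmetry both you and the paper appeal to, these are equivalent. A minor organizational difference is that the paper absorbs your inner/outer split into a single global bound $|\theta_t(y,z)-\theta_t(y,w)|\lesssim t^{-d}\min\{1,(t^{-1}|z-w|)^{\gamma}\}$ (valid without any restriction on $|z-w|$), then integrates this against $|k(s,z,w)|$ by cutting the integral at $|z-w|=t$; this avoids separately managing the outer tail of the smaller-scale kernel and gives a slightly cleaner computation, but the content is the same.
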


\begin{proof}
To estimate the kernel $\L^\theta_{t, s}(y, w)$, we follow a similar argument presented in \cite[Chapter 8, 8.6.3]{Grafakos2009}. 

Let $\theta \in \mathcal{H}_{\gamma, \beta}$, we have
    \begin{equation}\label{theta1}
        |\theta_t(y, z)| \leq \frac{ t^{-d}}{(1 + t^{-1}|y - z|)^{d + \beta}}, \quad \forall \, y, z \in \real^d,\ t> 0.
    \end{equation}
    For $ 2|z - z'| < t + |y- z|$, we have 
    $t^{-1}|z - z'| < 1 + t^{-1}|y - z|$, then
    \[
    \begin{aligned}
        |\theta_t(y, z) - \theta_t(y, z')| & \leq \frac{ t^{-d-\gamma}|z - z'|^\gamma}{(1+ t^{-1}|y - z|)^{d + \beta + \gamma}} \leq \frac{\min\lk{ (t^{-1}|z - z'|)^\gamma, \sk{1 + t^{-1}|y - z|}^\gamma } }{t^d(1+ t^{-1}|y - z|)^{d + \beta + \gamma}} \\
        & \lesssim \frac{ \min\lk{ 1, (t^{-1}|z-z'|)^{\gamma} } }{t^d}.
    \end{aligned}
    \]    
    For $ 2|z - z'| \geq t + |y- z|$, we have $ t^{-1}|z - z'| \geq 1/2$, then
    \[
        |\theta_t(y, z) - \theta_t(y, z')| \leq |\theta_t(y, z)| + |\theta_t(y, z')| \leq 2 t^{-d}\lesssim \frac{ \min\lk{ 1, (t^{-1}|z-z'|)^{\gamma} } }{t^d}.
    \]
    Hence
    \[
        |\theta_t(y, z) - \theta_t(y, z')| \lesssim \frac{ \min\lk{ 1, (t^{-1}|z-z'|)^{\gamma} } }{t^d}, \quad\forall\, y, z, z' \in \real^d,\ t > 0.
    \]
    On the other hand, Lemma \ref{2.8} asserts that there exists a positive constant $C_k$ such that $ C_k^{-1}(k(s, \cdot, \cdot))_{s^{-1}} \in \mathcal{H}_{\gamma, \beta} $ (see also the {Convention} afterwards). Thus, one gets for all $ w, z, z' \in \real^d,\ s > 0$,
    \[
        |k(s, z, w)| \lesssim \frac{C_k s^{-d}}{(1 + s^{-1}|z - w|)^{d + \beta}}, \quad |k(s, z, w) - k(s, z', w)| \lesssim \frac{ C_k\min\lk{ 1, (s^{-1}|z - z'|)^{\gamma} } }{s^d}.
    \]
    
    Now we start to deal with the kernel $\L^\theta_{t, s}(y, w)$. By symmetry, it suffices to handle the case $s \leq t$. First we observe the following estimate,
    \[
    \begin{aligned}
        \int_{\real^d} \frac{ s^{-d} \min\lk{ 1, (t^{-1}|u|)^{\gamma} } }{ (1 + s^{-1}|u|)^{d + \beta} }\, \d u & = \int_{|u| < t} \frac{ s^{-d}  (t^{-1}|u|)^{\gamma} }{ (1 + s^{-1}|u|)^{d + \beta} }\, \d u + \int_{|u| > t} \frac{ s^{-d} }{ (1 + s^{-1}|u|)^{d + \beta} }\, \d u \\
        & \leq \int_{|v| < t/s} \sk{\frac{s}{t}}^\gamma\frac{  |v|^{\gamma} }{ (1 + |v|)^{d + \beta} }\, \d v + \int_{|u| > t} s^{\beta}|u|^{-d-\beta}\, \d u \\
        & =: J_1 + J_2
    \end{aligned}
    \]
    Taking $\nu = 2^{-1}\min\lk{\gamma, \beta}$, and we have $|v|^\gamma < (t/s)^{\gamma - \nu}|v|^\nu $. Then we obtain
    \[
        J_1 \leq \sk{ \frac{s}{t} }^{\nu} \int_{\real^d} \frac{  |v|^{\nu} }{ (1 + |v|)^{d + \beta} }\, \d v \lesssim_{\gamma, \beta} \sk{ \frac{s}{t} }^{\nu}.
    \]
    For $J_2$, we have
    \[
        J_2 \lesssim \int_{t}^{\infty} s^\beta r^{-\beta - 1}\,\d r \lesssim_{\beta} \sk{ \frac{s}{t} }^{\beta} \leq \sk{ \frac{s}{t} }^{\nu}. 
    \]
    Thus for $s \leq t$, by the vanishing property \eqref{Nicef3}  of $k(s,\cdot,w)$, one gets
     \[
    \begin{aligned}
        |\L^\theta_{t, s}(y, w)| & \leq \jdz{ \int_{\real^d}\mk{\theta_t(y, z) - \theta_t(y, w)}k(s, z, w) \, \d z } \\
        & \leq C_k \int_{\real^d} \frac{ \min\lk{ 1, (t^{-1}|z-w|)^{\gamma} } }{t^d}\frac{s^{-d}}{ (1 + s^{-1}|z - w|) } \, \d z \\
        & \lesssim_{\gamma, \beta} t^{-d} \sk{ \frac{s}{t} }^{\nu } \leq \min\lk{ \frac{1}{t}, \frac{1}{s} }^d\min\lk{ \frac{s}{t}, \frac{t}{s} }^{ \nu }.
    \end{aligned}
    \]
    On the other hand,
    \[
        |\L^\theta_{t, s}(y, w)| \leq  \int_{\real^d} |\theta_t(y, z) ||k(s, z, w) |\, \d z  \lesssim_{\beta} \frac{ \min\lk{ \frac{1}{t}, \frac{1}{s} }^d }{ \sk{ 1 + \min\lk{ \frac{1}{t}, \frac{1}{s} }|y - w|}^{d + \beta} }.
    \]
Let $ \zeta = (d + 2^{-1}\beta )(d + \beta)^{-1}$, we then get
    \[
        |\L^\theta_{t, s}(y, w)| = |\L^\theta_{t, s}(y, w)|^{1-\zeta}|\L^\theta_{t, s}(y, w)|^\zeta \lesssim_{\gamma, \beta} \frac{ \min\lk{ \frac{s}{t}, \frac{t}{s} }^{ (1 - \zeta)\nu } \min\lk{ \frac{1}{t}, \frac{1}{s} }^d }{ \sk{ 1 + \min\lk{ \frac{1}{t}, \frac{1}{s} }|y - w|}^{d + \frac{1}{2}\beta} }.
    \]
    It is clear that the estimation of $\L^\theta_{t, s}(y, w)$ is independent of the choice of $\theta$, and thus the desired estimate is obtained.
\end{proof}

Now let us  prove Theorem \ref{equiv}.

\begin{proof}
The pointwise estimate \eqref{SG2} follows from Lemma \ref{lem:facts} (ii). Indeed, for $|x - y|< t$, let $w = (x - y)/t$; then for any $\f \in \mathcal{H}_{\gamma, \beta}$, we have $2^{-d - \beta -\gamma}\f^{(w)} \in \mathcal{H}_{\gamma, \beta}$. Hence
\[
\begin{aligned}
    A_{\gamma, \beta}(f)(x, t) & = \sup_{\f \in \mathcal{H}_{\gamma,\beta} } \norm{ \int_{\real^d} \f_t(x,z)f(z)\,\d z }_X\\
    & \leq 2^{d + \beta + \gamma}\sup_{\f^{(w)} \in \mathcal{H}_{\gamma,\beta} } \norm{ \int_{\real^d} \sk{\f^{(w)}}_t(x,z)f(z)\,\d z }_X \\
    & = 2^{d + \beta + \gamma}A_{\gamma, \beta}(f)(y, t).
\end{aligned}
\]
Exchanging $x$ and $y$ and taking $-w$ in place of $w$, the reverse inequality is also true. Then \eqref{SG2} follows immediately.

Now we turn to the pointwise estimates \eqref{Square Wilson}.
Lemma \ref{2.8} asserts that there exists a positive constant $C_k$ such that $ C_k^{-1}(k(t, \cdot, \cdot))_{t^{-1}} \in \mathcal{H}_{\gamma, \beta} $ (see also the {Convention} afterwards). Consequently, for all $x\in \real^d, \ t>0$, we have
\begin{equation}\label{Qf-Af}
\begin{aligned}
    \| \Q(f)(x, t) \|_X & = \norm{ \int_{\real^d} k(t, x, y)f(y)\,\d y }_X = C_k\norm{ \int_{\real^d} \sk{ C_k^{-1}(k(t, x, y))_{t^{-1}} }_tf(y)\,\d y }_X\\
    & \leq C_k \sup_{\f \in \mathcal{H}_{\gamma, \beta} }\norm{ \int_{\real^d} \f_t(x, y)f(y)\,\d y }_X = C_k A_{\gamma, \beta}(f)(x, t).
\end{aligned}
\end{equation}
Then the estimates \eqref{Square Wilson} follows trivially.

Below we explain the proof of \eqref{Wilson Square}. Let $h \in C_c(\real^{d+1}_+)\otimes X$, we have
    \[
    \begin{aligned}
        A_{\gamma, \beta}[\pi_L(h)](y, t) & = \sup_{\theta \in \mathcal{H}_{\gamma, \beta}} \norm{ \int_{\real^d}\theta_t(y, z)\sk{ \int_{\real^{d+1}_+} k(s, z, w)h(w, s)\, \frac{\d w\d s}{s} } \,\d z }_X \\
        & = \sup_{\theta \in \mathcal{H}_{\gamma, \beta}} \norm{ \int_{\real^{d+1}_+} \sk{ \int_{\real^d}\theta_t(y, z)k(s, z, w)\,\d z } h(w, s)\, \frac{\d w\d s}{s} }_X \\
        & = \sup_{\theta \in \mathcal{H}_{\gamma, \beta}} \norm{ \int_{\real^{d+1}_+} \L^\theta_{t, s}(y, w) h(w, s)\, \frac{\d w\d s}{s} }_X \\
        & \leq \int_{\real^{d+1}_+} \sk{ \sup_{\theta \in \mathcal{H}_{\gamma, \beta}} |\L^\theta_{t, s}(y, w)| }\| h(w, s) \|_X\, \frac{\d w\d s}{s} \\
        & =: \L( \| h \|_X )(y, t),
    \end{aligned}
    \]
where the linear operator $\L$ has the kernel 
\[
    \L_{t,s}(y,w)=\sup_{\theta \in \mathcal{H}_{\gamma, \beta}}|\L^\theta_{t, s}(y, w)|.
\]
Then by Lemma \ref{lem:thetakestimates} and Lemma \ref{BoundTent} in the case $X = \com$, one obtaines
    \[
        \| \L( \| h \|_X ) \|_{T^p_q(\com)} \lesssim_{\gamma, \beta}p^{\frac{1}{q}} \| \|h\|_X \|_{T^p_q(\com)} =  p^{\frac{1}{q}}\| h \|_{T^p_q(X)},  \quad 1 \leq p < \infty.
    \]
    Therefore
    \[
        \| A_{\gamma, \beta}[\pi_L(h)] \|_{T^p_q(\com)} \lesssim_{\gamma, \beta} p^{\frac{1}{q}}\| h \|_{T^p_q(X)},  \quad 1 \leq p < \infty.
    \]
Let $f \in C_c(\mathbb R^d)\otimes X$, then we have $\Q(f) \in T^p_q(\real^{d+1}_+; X)$; {moreover from the formula \eqref{funcal1} and the fact that the fixed point subspace of $L^p(\mathbb R^d; X)$ is $0$ (see the statement before Remark \ref{rk:Lstar}), the following Calder\'on identity holds
\begin{equation}\label{funcal}
    f= 4 \int_{0}^{\infty} \Q\mk{\Q(f)(\cdot, t)}(\cdot, t) \,\frac{\d t}{t}.
\end{equation}}
Therefore, one has that for $1 \leq p < \infty$,
    \[
    \begin{aligned}
        \| S_{q, \gamma, \beta}(f) \|_p & = \| A_{\gamma, \beta}(f) \|_{T^p_q(\com)} = 4\| A_{\gamma, \beta}\mk{ \pi_L(\Q(f)) } \|_{T^p_q(\com)} \\
        & \lesssim_{\gamma, \beta} p^{\frac{1}{q}}\| \Q(f) \|_{T^p_q(X)} = p^{\frac{1}{q}}\| S_{q,L}(f) \|_p,
    \end{aligned}
    \]
    which is the desired inequality.
\end{proof}

{
\begin{remark}\label{equiv-BMO}
    For any $f \in C_c(\real^d) \ot X$, by Lemma \ref{BoundTent}, we also obtain
    \[
        \| A_{\gamma, \beta}(f) \|_{T^\infty_q(\com)} = 4\| A_{\gamma, \beta}\mk{ \pi_L(\Q(f)) }  \|_{T^\infty_q(\com)} \lesssim_{\gamma, \beta} \| \Q(f) \|_{T^\infty_q(X)} = \| f \|_{BMO_{q, L}(X)}.      
    \]
    Together with the pointwise estimate \eqref{Qf-Af}, one gets the BMO--version of Theorem \ref{main2}: Let $L$ be a generator as in Theorem \ref{main2}, then
    \begin{equation}
        \| f \|_{BMO_{q, L}(X)} \approx_{\gamma, \beta} \| f \|_{BMO_{q, \sqrt{\Delta}}(X)}.
    \end{equation}
\end{remark}
}

\section{Proof of the main Theorem}\label{Proof}
As pointed out in the introduction, the equivalence \eqref{SL=SD} in Theorem \ref{main2} is an easy consequence of Theorem \ref{equiv}; but for another equivalence  \eqref{SL=SG}, we need to develop fully Mei's duality arguments between vector-valued Hardy and BMO type spaces \cite{Mei2007}. This will be accomplished in the present section by combining the theory of vector-valued tent spaces and vector-valued Wilson's square functions---Theorem \ref{equiv}.

 First of all, based on the duality between tent spaces---Lemma \ref{dualtent},  the boundedness of the projection $\pi_L$---Lemma \ref{l3.8}---yields the following vector-valued Fefferman-Stein duality theorem.
\begin{thm}\label{Dual}
Let $X$ be any fixed Banach space and $1<q<\infty$. Let $L$ be as in Theorem \ref{main2}. Both the spaces $BMO_{q', L^*}^{p'}(\real^d; X^*)$ and $H^{p'}_{q', L^*}(\real^d; X^*)$ are isomorphically identified as subspaces of the dual space of $H^p_{q, L}(\real^d; X)$. Moreover, they are norming for $H^p_{q, L}(\real^d; X)$ in the following sense, 
    \[
        \| f \|_{ H^p_{q, L} } \lesssim_{\beta} \max\lk{ p^{\frac{1}{q}}p'^{\frac{1}{q'}}, p'^{\frac{2}{q'}} } \sup_g \jdz{ \int_{\real^d} \lara{ f(x), g(x) }_{X \times X^*} \,\d x }, \quad 1 < p < \infty,     
    \]
    where the supremum is taken over all $g \in C_c(\real^d) \ot X^*$ such that $\|g\|_{H^{p'}_{q',L^*}(X^*)}\leq 1$, and similarly,
    \[
        \| f \|_{ H^p_{q, L} } \lesssim_{\beta} \sk{\frac{p(q-1)}{q-p}}^{\frac{1}{q'}} \sup_g \jdz{ \int_{\real^d} \lara{ f(x), g(x) }_{X \times X^*} \,\d x }, \quad 1 \leq p < q,     
    \]
    where the supremum is taken over all $g \in C_c(\real^d) \ot X^*$ such that $\|g\|_{BMO^{p'}_{q',L^*}(X^*)}\leq 1$.    
    Furthermore, if $X^*$ has the Radon-Nikod{\'y}m property. Then 
    \[
    \begin{aligned}
        & BMO^{p'}_{q', L^*}(X^*) = \sk{ H^p_{q, L}(\real^d; X) }^*,\quad 1 \leq p < q ;\\
        & H^{p'}_{q', L^*}(X^*)= \sk{ H^p_{q, L}(\real^d; X)}^*,  \quad 1 < p < \infty.
    \end{aligned}
    \]
\end{thm}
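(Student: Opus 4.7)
The plan is to transport the duality to the tent-space side via the isometric embedding $\mathcal{Q}\colon H^p_{q,L}(\real^d;X)\hookrightarrow T^p_q(\real^{d+1}_+;X)$ and then recover the Hardy/BMO duality from Lemma~\ref{dualtent} using the continuous projection $\mathcal{P}=4\mathcal{Q}\circ\pi_L$ (Lemma~\ref{l3.8} and Remark~\ref{extend:PiL}). The bridge between the two dualities is the Calder\'on reproducing formula \eqref{funcal}, $f=4\pi_L(\mathcal{Q}(f))$ for $f\in C_c(\real^d)\otimes X$, combined with the elementary adjoint identifications $\pi_L^*=\mathcal{Q}_{L^*}$ and $\mathcal{Q}^*=\pi_{L^*}$ obtained from integrating the kernel $k(t,\cdot,\cdot)$ against a test pair (here $L^*$ falls under the same framework as $L$ by Remark~\ref{rk:Lstar}). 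Combining them produces the key pairing
\begin{equation}\label{PairingBridge}
\int_{\real^d}\langle f(x),g(x)\rangle_{X\times X^*}\,\d x=4\int_{\real^{d+1}_+}\langle\mathcal{Q}(f)(x,t),\mathcal{Q}_{L^*}(g)(x,t)\rangle\,\frac{\d x\,\d t}{t},
\end{equation}
valid for $f\in C_c(\real^d)\otimes X$ and $g\in H^{p'}_{q',L^*}(\real^d;X^*)$ or $BMO^{p'}_{q',L^*}(\real^d;X^*)$.

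Boundedness of the pairing is immediate from \eqref{PairingBridge}: if $g\in H^{p'}_{q',L^*}(X^*)$, then $\|\mathcal{Q}_{L^*}(g)\|_{T^{p'}_{q'}(X^*)}=\|g\|_{H^{p'}_{q',L^*}}$ and the H\"older-type tent-space pairing bounds the right-hand side by $\|f\|_{H^p_{q,L}}\|g\|_{H^{p'}_{q',L^*}}$; if $g\in BMO^{p'}_{q',L^*}(X^*)$, then $\|\mathcal{C}_{q'}(\mathcal{Q}_{L^*}(g))\|_{p'}=\|g\|_{BMO^{p'}_{q',L^*}}$ and the Fefferman-type $T^p_q$--Carleson pairing gives the analogous bound. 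For the norming direction, start from $\|f\|_{H^p_{q,L}}=\|\mathcal{Q}(f)\|_{T^p_q(X)}$ and apply \eqref{dual1} (resp.\ \eqref{dual2}) of Lemma~\ref{dualtent}: there is $h\in C_c(\real^{d+1}_+)\otimes X^*$ with $\|h\|_{T^{p'}_{q'}(X^*)}\le 1$ (resp.\ $\|\mathcal{C}_{q'}(h)\|_{p'}\le 1$) almost realizing the norm via $\int\langle\mathcal{Q}(f),h\rangle\,\d x\,\d t/t$. Set $g:=4\pi_{L^*}(h)$; by Lemma~\ref{l3.8} applied to $L^*$, $\|g\|_{H^{p'}_{q',L^*}}\lesssim_\beta p'^{1/q'}$ (resp.\ $\|g\|_{BMO^{p'}_{q',L^*}}\lesssim_\beta 1$). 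Since $\mathcal{Q}(f)$ is a fixed point of $\mathcal{P}$ by Calder\'on and $\mathcal{P}^*=4\mathcal{Q}_{L^*}\circ\pi_{L^*}$,
\[
\int\langle\mathcal{Q}(f),h\rangle\,\frac{\d x\,\d t}{t}=\int\langle\mathcal{Q}(f),\mathcal{P}^*(h)\rangle=\int\langle\mathcal{Q}(f),\mathcal{Q}_{L^*}(g)\rangle=\frac{1}{4}\int\langle f,g\rangle\,\d x
\]
by \eqref{PairingBridge}; normalizing $g$ yields exactly the announced constants $\max\{p^{1/q}p'^{1/q'},p'^{2/q'}\}$ and $(p(q-1)/(q-p))^{1/q'}$.

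Under the Radon--Nikod\'ym assumption on $X^*$, Lemma~\ref{dualtent} upgrades to the genuine identification $(T^p_q(X))^*=T^{p'}_{q'}(X^*)$ for $1\le p<\infty$ (in its $T^\infty_{q'}$/BMO form for $p=1$). Because $\mathcal{Q}$ is an isometry of $H^p_{q,L}(X)$ onto the complemented subspace $\mathrm{Im}(\mathcal{P})\subset T^p_q(X)$, the dual is identified with $\mathrm{Im}(\mathcal{P}^*)$; since $\mathcal{P}^*=4\mathcal{Q}_{L^*}\circ\pi_{L^*}$, its range equals $\mathcal{Q}_{L^*}(H^{p'}_{q',L^*}(X^*))\cong H^{p'}_{q',L^*}(X^*)$ for $1<p<\infty$ and $\mathcal{Q}_{L^*}(BMO_{q',L^*}(X^*))\cong BMO_{q',L^*}(X^*)$ for $p=1$. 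The collapse $BMO^{p'}_{q',L^*}=H^{p'}_{q',L^*}$ whenever $p'>q'$ (i.e.\ $1<p<q$) then yields both identifications stated in the theorem.

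The main obstacle is precisely the handoff in the norming step: a generic tent-space test $h$ is not of the form $\mathcal{Q}_{L^*}(g)$, so one needs to replace it by one without spoiling the optimal $p$-dependence. The combination of $\mathcal{Q}(f)=\mathcal{P}\mathcal{Q}(f)$ (Calder\'on) with the self-adjoint structure $\mathcal{P}^*=4\mathcal{Q}_{L^*}\pi_{L^*}$ compresses the entire loss to a single use of the second assertion of Lemma~\ref{l3.8} (the Carleson-preserving boundedness of $\pi_{L^*}$ into the relevant Hardy or BMO space); without this ingredient the bridge from tent-space to Hardy/BMO duality would not close, and the sharp constants would be out of reach.
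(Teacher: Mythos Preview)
Your proposal is correct and follows essentially the same route as the paper: embed $H^p_{q,L}$ isometrically into $T^p_q$ via $\mathcal{Q}$, invoke the tent-space duality of Lemma~\ref{dualtent}, use the Calder\'on identity so that $\mathcal{Q}(f)$ is a fixed point of $\mathcal{P}$, and then pull the tent-space test function $h$ back to the Hardy/BMO side through $g=4\pi_{L^*}(h)$ with the constants supplied by Lemma~\ref{l3.8}; the Radon--Nikod\'ym case is handled by the complemented-subspace argument. One small point you gloss over (as does the paper): the $g$ produced as $4\pi_{L^*}(h)$ is not literally in $C_c(\real^d)\otimes X^*$, so a routine density/approximation step is needed to match the supremum stated in the theorem.
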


\begin{remark}
    Indeed, we can also obtain this duality theorem under the assumption that $L$ be a sectotrial operator satisfying only \eqref{pt1}, see Remark \ref{extend:PiL}.
\end{remark}

The more essential auxiliary result  is the following duality property, which is inspired by \cite[Theorem 2.4]{Mei2007} (see also \cite{Xia2016,Xu2022}).

\begin{prop}\label{GSB}
Let $X$ be any fixed Banach space and $1 \leq p < q$. Let $L$ be any fixed sectorial operator satisfying \eqref{pt10}, \eqref{pt2} and \eqref{pt3}. Then for any $f \in C_c(\real^d) \ot X$ and $g \in C_c(\real^d) \ot X^*$, one has
\begin{equation}\label{5.3}
    \jdz{ \int_{\real^d} \lara{f(x), g(x)}_{X \times X^*}dx }  \lesssim_{\gamma,\beta} \| G_{q,L}(f) \|_p^{\frac{p}{q}}\| S_{q,L}(f) \|_{p}^{1-\frac{p}{q}}\| g \|_{BMO^{p'}_{q', L^*}(X^*)}.
\end{equation}
\end{prop}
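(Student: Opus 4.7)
My plan is to adapt Mei's duality technique \cite{Mei2007} between Hardy and BMO spaces to the vector-valued setting developed earlier in this paper. The starting point is the Calder\'on reproducing formula \eqref{funcal}, which gives
\[
\int_{\real^d}\langle f(x), g(x)\rangle_{X\times X^*}\,\d x = 4\int_{\real^{d+1}_+}\langle \mathcal{Q}(f)(x,t), \mathcal{Q}^*(g)(x,t)\rangle \frac{\d x\,\d t}{t},
\]
so that $\bigl|\int \langle f, g\rangle \,\d x\bigr| \leq 4\, I$, where $I := \int_{\real^{d+1}_+} \|\mathcal{Q}(f)\|_X \,\|\mathcal{Q}^*(g)\|_{X^*} \frac{\d x\,\d t}{t}$.

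I would next bound $I$ in two complementary ways. First, H\"older's inequality pointwise in $t$ gives $\int_0^\infty \|\mathcal{Q}(f)(x,t)\|_X \|\mathcal{Q}^*(g)(x,t)\|_{X^*}\,\d t/t \leq G_{q,L}(f)(x)\,G_{q',L^*}(g)(x)$, and H\"older in $x$ with exponents $p,p'$ then yields $I \leq \|G_{q,L}(f)\|_p\,\|G_{q',L^*}(g)\|_{p'}$. Second, passing to cone integration via Fubini and applying the scalar tent-space pairing $\int \phi\psi\,\d x\d t/t \lesssim \int \mathcal{A}_q(\phi)\,\mathcal{C}_{q'}(\psi)\,\d x$ (an immediate consequence of Lemma \ref{dualtent} applied to $\phi = \|\mathcal{Q}(f)\|_X$, $\psi = \|\mathcal{Q}^*(g)\|_{X^*}$) yields $I \lesssim \|S_{q,L}(f)\|_p \|g\|_{BMO^{p'}_{q',L^*}(X^*)}$. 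Combining the two bounds via the elementary identity $I = I^{p/q}\cdot I^{(q-p)/q}$ gives
\[
I \lesssim \|G_{q,L}(f)\|_p^{p/q}\,\|S_{q,L}(f)\|_p^{(q-p)/q}\,\|G_{q',L^*}(g)\|_{p'}^{p/q}\,\|g\|_{BMO^{p'}_{q',L^*}(X^*)}^{(q-p)/q}.
\]

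The final step is to show $\|G_{q',L^*}(g)\|_{p'} \lesssim \|g\|_{BMO^{p'}_{q',L^*}(X^*)}$. For $1 < p < q$ (so that $q'<p'<\infty$), this follows by applying Theorem \ref{equiv} to the adjoint operator $L^*$, which satisfies the same hypotheses as $L$ by Remark \ref{rk:Lstar}. That comparison yields $\|G_{q',L^*}(g)\|_{p'} \lesssim \|S_{q',L^*}(g)\|_{p'} = \|\mathcal{A}_{q'}(\mathcal{Q}^*(g))\|_{p'}$, and the tent-space equivalence $\|\mathcal{A}_{q'}(\cdot)\|_{p'} \approx \|\mathcal{C}_{q'}(\cdot)\|_{p'}$ from \eqref{A-C}, \eqref{C-A}, valid precisely when $p'>q'$, converts the right-hand side into $\|g\|_{BMO^{p'}_{q',L^*}(X^*)}$. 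Substituting $\|G_{q',L^*}(g)\|_{p'}^{p/q} \lesssim \|g\|_{BMO^{p'}_{q',L^*}(X^*)}^{p/q}$ back into the above, and using $p/q + (q-p)/q = 1$ on the BMO exponent, produces the desired estimate.

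The main obstacle is the endpoint $p=1$ (so $p'=\infty$), where \eqref{C-A} and hence the required bound on $\|G_{q',L^*}(g)\|_\infty$ no longer apply. I would handle this endpoint separately by combining the atomic decomposition of $T^1_q(\real^{d+1}_+; X)$ from Lemma \ref{atmicDectent} with the identity $f = 4\pi_L(\mathcal{Q}(f))$: writing $\mathcal{Q}(f) = \sum_k \lambda_k a_k$ with $(X,q)$-atoms $a_k$ supported in tents $\widehat{B_k}$ and $\sum_k|\lambda_k| \approx \|S_{q,L}(f)\|_1$, each pairing $\int \pi_L(a_k)\,g\,\d x$ reduces to a local computation on $\widehat{B_k}$ where the Carleson character of $g$ is captured by $\mathcal{C}_{q'}(\mathcal{Q}^*(g))$ averaged over $B_k$; combining with a local $g$-function information on $a_k$ recovers the mixed weight $\|G_{q,L}(f)\|_1^{1/q}\|S_{q,L}(f)\|_1^{(q-1)/q}$ after summation.
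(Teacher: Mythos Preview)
Your proposal has a genuine gap. The bound $\|G_{q',L^*}(g)\|_{p'} \lesssim \|g\|_{BMO^{p'}_{q',L^*}(X^*)}$, which you need to close the argument, is not uniform in $p$ and fails outright at the endpoint: when $p=1$ (so $p'=\infty$) it is simply false, since functions in $BMO_{q',L^*}$ need not have bounded vertical $g$-functions (already in the scalar Poisson case, $\log|x|\in BMO$ has unbounded $g$-function). For $1<p<q$ the chain you describe passes through Theorem~\ref{equiv} and \eqref{C-A}, introducing constants of order $(p')^{1/q'}(q')^{p'/q'}$ that blow up as $p\to 1$; this destroys the $\lesssim_{\gamma,\beta}$ claimed in the proposition and with it the optimal orders in Theorem~\ref{main2}. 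Your atomic sketch for $p=1$ does not explain how the factor $\|G_{q,L}(f)\|_1^{1/q}$ can possibly emerge from a decomposition $\mathcal Q(f)=\sum_k\lambda_k a_k$ that is governed entirely by $\|S_{q,L}(f)\|_1$.

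The paper's proof avoids this obstacle by \emph{not} separating two bounds on the pairing. Instead it applies H\"older on $\real^{d+1}_+$ with the weight $G(x,t)^{(p-q)/q}$, where $G(x,t)=\bigl(\int_t^\infty\|\mathcal Q(f)(x,s)\|_X^q\,\d s/s\bigr)^{1/q}$ is the \emph{truncated} $g$-function of $f$. One factor telescopes exactly to $\|G_{q,L}(f)\|_p^{p/q}$; the other is $\int G(x,t)^{(q-p)/(q-1)}\|\mathcal Q^*(g)(x,t)\|_{X^*}^{q'}\,\d x\,\d t/t$, carrying the weight on the $g$-side. The key step---and the real use of Theorem~\ref{equiv} here---is the pointwise bound $G(x,t)\lesssim_{\gamma,\beta} S(x,t)$, where $S(x,t)$ is a truncated \emph{intrinsic} cone function. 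Because $S$ admits a dyadic martingale-type monotone approximation $\mathcal S(x,k)$, a stopping-time summation bounds the second factor by $\|S_{q,\gamma,\beta}(f)\|_p^{(q-p)/q}\|g\|_{BMO^{p'}_{q',L^*}(X^*)}$ directly, with no appeal to $\|G_{q',L^*}(g)\|_{p'}$ at all. This is precisely Mei's mechanism; your simplified ``interpolate two estimates of $I$'' version discards the weight and thereby loses the endpoint.
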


\begin{proof}   
Fixing $ f \in C_c(\real^d) \ot X $ and $ g \in C_c(\real^d) \ot X^* $, we consider truncated versions of $ G_{q,L}(f)(x) $ as follows:
\[
    G(x,t) := \sk{ \int_{t}^{\infty} \|\Q(f)(x, s)\|_X^q \frac{dxds}{s} }^{\frac{1}{q}},\quad x \in \real^d,t>0.
\]
By approximation, we can assume that $ G(x,t)$ is strictly positive. The operator $-tL^*e^{-tL^*}$ is denoted by $\Q^*$. By the Calder\'on identity---\eqref{funcal}, we have
\begin{align*}
   \jdz{ \int_{\real^d} \lara{f(x), g(x)}_{X \times X^*}\, \d x } & = 4 \jdz{\int_{\real^{d+1}_+} \lara{ \Q(f)(x, t), \Q^*(g)(x, t) }_{X \times X^*}\,\frac{\d x\d t}{t} } \\
   & = 4 \jdz{\int_{\real^{d+1}_+} \lara{ G^{\frac{p-q}{q}}(x,t)\Q(f)(x, t), G^{\frac{q-p}{q}}(x,t)\Q^*(g)(x, t) }_{X \times X^*}\,\frac{\d x\d t}{t}} \\
   & \lesssim \sk{ \int_{\real^{d+1}_+}G^{p-q}(x,t)\| \Q(f)(x, t) \|_X^q \,\frac{\d x\d t}{t} }^{\frac{1}{q}} \\
   & \qquad \cdot\sk{\int_{\real^{d+1}_+} G^{\frac{q-p}{q-1}}(x,t)\| \Q^*(g)(x, t) \|_{X^*}^{q'}\,\frac{\d x\d t}{t}}^{\frac{1}{q'}} \\
   & = I\cdot II.
\end{align*}

The term $I$ is estimated as below,
\begin{align*}
I^q & = - \int_{\real^d} \int_{0}^{\infty} G^{p-q}(x,t)\partial_t\sk{ G^q(x, t) }  \, \d t\d x \\
& = -q \int_{\real^d}\int_{0}^{\infty} G^{p-1}(x,t)\partial_t G(x,t) \, \d t\d x  \\
& \leq -q \int_{\real^d}\int_{0}^{\infty} G^{p-1}(x,0)\partial_t G(x,t) \, \d t\d x \\ 
& = q\int_{\real^d}G^p(x,0) \, \d t\d x  = q\| G_{q,L}(f) \|_p^p,
\end{align*}
since $G(x, t)$ is decreasing in $t$, and $G(x, 0) = G_{q,L}(f)(x)$.

For the term $II$, we introduce two more variants of $S_{q, \gamma,\beta}(f) $ (cf. \cite{Xu2022}). The first is defined similarly to $ G(\cdot,t) $:
\[
    S(x,t) = \sk{ \int_{t}^{\infty}\int_{|y-x| <s-\frac{t}{2} } \sk{ A_{\gamma,\beta}(f)(y,s) }^q \,\frac{\d y\d s}{s^{d+1}} }^{\frac{1}{q}},\quad x\in \real^d,\ t>0.
\]
To introduce the second one, let $ \D_k $ be the family of dyadic cubes in $\real^d$ of side length $ 2^{-k} $, that is, 
\[
    \D_k = \lk{ 2^{-k}\prod_{ j = 1 }^d[m_j, m_j+1): m_j \in \ent, k \in \ent }.
\]
Denote $ c_Q $ as the center of a cube $ Q $. Then, we define
\[
    \S(x,k) = \sk{ \int_{\sqrt{d}2^{-k}}^{\infty}  \int_{|y-c_Q|< s} \sk{ A_{\gamma,\beta}(f)(y,s) }^q \,\frac{\d y\d s}{s^{d+1}} }^{\frac{1}{q}}, \quad \text{if } x \in Q \in \D_k,\ k\in \ent .
\]
By definition, we have the following properties,
\begin{itemize}
    \item[(i)] $ \S(\cdot,k) $ is increasing in $ k $,
    \item[(ii)] $ \S(\cdot,k) $ is constant on every cube $ Q \in \D_k $,
    \item[(iii)] $ \S(x,-\infty) = 0 $ and $ \S(x,\infty) = S(x,0) = S_{q, \gamma,\beta}(f)(x) $.
\end{itemize}  
If $ s \geq t \geq \sqrt{d}2^{-k} $ and $ x\in Q \in \D_k $, then $ B(x,s-\frac{t}{2}) \subset B(c_Q,s) $, where $ B(x,t) $ denotes the ball with center $ x $ and radius $ t $. This implies
\[
    S(x,t) \leq \S(x,k), \quad  x \in Q \in \D_k \text{ whenever } t \geq \sqrt{d}2^{-k}.
\]
Using \eqref{SG2} and \eqref{Square Wilson} we have
\[
    G_{q,L}(f)(x) \lesssim G_{q, \gamma,\beta}(f)(x) \approx_{\gamma, \beta} S_{q, \gamma,\beta}(f)(x),
\]
and similarly,
\begin{equation}\label{G_xt-S_xt}
    G(x,t) \lesssim_{\gamma, \beta} S(x,t).
\end{equation}
Now we proceed to estimate the term $B$ based on these observations. {Applying \eqref{G_xt-S_xt} to $II$, we have}
\begin{align*}
  II^{q'} & \lesssim_{\gamma, \beta} \int_{\real^{d+1}_+} S^{\frac{q-p}{q-1}}(x,t)\| \Q^*(g)(x, t) \|_{X^*}^{q'}\,\frac{\d x\d t}{t} \\
   & = \sum_{k=-\infty}^{\infty} \sum_{Q \in \D_k} \int_Q\int_{\sqrt{d}2^{-k}}^{\sqrt{d}2^{-k+1}}S^{\frac{q-p}{q-1}}(x,t)\| \Q^*(g)(x, t) \|_{X^*}^{q'}\,\frac{\d t}{t}\d x \\
   & \leq \sum_{k=-\infty}^{\infty} \sum_{Q \in \D_k} \int_Q\int_{\sqrt{d}2^{-k}}^{\sqrt{d}2^{-k+1}}\S^{\frac{q-p}{q-1}}(x,k)\| \Q^*(g)(x, t) \|_{X^*}^{q'}\,\frac{\d t}{t}\d x \\
   & = \int_{\real^d} \sum_{k=-\infty}^{\infty} \sum_{j = -\infty}^{k} D(x,j)\int_{\sqrt{d}2^{-k}}^{\sqrt{d}2^{-k+1}}\| \Q^*(g)(x, t) \|_{X^*}^{q'}\,\frac{\d t}{t}\d x
\end{align*}
where $ D(x,j) = \S^{\frac{q-p}{q-1}}(x,j) - \S^{\frac{q-p}{q-1}}(x,j-1) $. Then $ D(x,j) $ is constant on every cube $ Q \in \D_j $. Thus
\begin{align*}
  II^{q'} & \lesssim_{\gamma,\beta} \int_{\real^d} \sum_{j=-\infty}^{\infty}D(x,j)\sk{ \sum_{k = j}^{\infty} \int_{\sqrt{d}2^{-k}}^{\sqrt{d}2^{-k+1}}\| \Q^*(g)(x, t) \|_{X^*}^{q'} \,\frac{\d t}{t} }\d x \\
   & = \sum_{j=-\infty}^{\infty} \sum_{Q \in \D_j} \int_Q D(x,j) \int_{0}^{\sqrt{d}2^{-j+1}}\| \Q^*(g)(x, t) \|_{X^*}^{q'}\,\frac{\d t}{t}\d x \\
   & = \sum_{j=-\infty}^{\infty} \sum_{Q\in\D_j}  D(x,j) \1_Q(x) \int_Q\int_{0}^{2\sqrt{d}\ell(Q)}\| \Q^*(g)(x, t) \|_{X^*}^{q'}\,\frac{\d t}{t}\d x,
\end{align*}
where $\ell(Q)$ denotes the length of $Q$. There exists a ball $ B $ such that $ Q \subset B$, $Q\times (0,2\sqrt{d}\ell(Q)] \subset \wh{B} $ and $  |B| \lesssim |Q| $. Then we deduce that
\[
    \int_Q\int_{0}^{2\sqrt{d}\ell(Q)}\| \Q^*(g)(x, t) \|_{X^*}^{q'}\,\frac{\d t}{t}\d x \leq \inf_{y\in B} \lk{\mathcal{C}_{q'}\mk{\Q^*(g)}(y)}^{q'}|B| \lesssim \inf_{y\in Q} \lk{\mathcal{C}_{q'}\mk{\Q^*(g)}(y)}^{q'}|Q|.
\]

Therefore
\begin{align*}
  II^{q'} & \lesssim_{\gamma,\beta} \sum_{j=-\infty}^{\infty} \sum_{Q\in\D_j}  D(x,j) \1_Q(x) \inf_{y\in Q} \lk{\mathcal{C}_{q'}\mk{\Q^*(g)}(y)}^{q'}|Q|\\
   & \leq \sum_{j=-\infty}^{\infty} \sum_{Q\in\D_j} \int_Q D(x,j) \sk{ \mathcal{C}_{q'}\mk{\Q^*(g) }(x) }^{q'}\,\d x \leq \int_{\real^d}\sum_{j=-\infty}^{\infty}D(x,j) \sk{ \mathcal{C}_{q'}\mk{\Q^*(g) }(x) }^{q'}\,\d x \\
   & = \int_{\real^d} \S^{\frac{q-p}{q-1}}(x,\infty)\sk{ \mathcal{C}_{q'}\mk{\Q^*(g) }(x) }^{q'}\,\d x = \| S^{\frac{q-p}{q-1}}_{q, \gamma,\beta}(f) \|_r \norm{ \sk{ \mathcal{C}_{q'}\mk{\Q^*(g) } }^{q'} }_{r'}  \\
   & = \| S_{q, \gamma,\beta}(f) \|_p^{\frac{q-p}{q-1}}\| \mathcal{C}_{q'}\mk{\Q^*(g)} \|_{p'}^{q'},
\end{align*}
where $1/r = 1 - q'/p' = (q-p)/(qp-p)$.

Combining the estimates of $I$ and $II$ with Theorem \ref{equiv}, we get the desired assertion.
\end{proof}

Finally, we arrive at the proof of our main theorem.

\begin{proof}[Proof of Theorm \ref{main2}]
    The first part \eqref{SL=SD} of Theorem \ref{main2} is a consequence of Theorem \ref{equiv}. Indeed, suppose $L$ be a generator such that the kernels of the generating semigroup satisfy \eqref{pt10} , \eqref{pt2} and \eqref{pt3} with $0 < \beta, \gamma \leq 1$, then the classical Poisson semigroup generated by $\sqrt{\Delta}$ satisfy obviously the same assumptions. Then
    \[ 
        \| S_{q,L}(f) \|_p \lesssim \| S_{q, \gamma, \beta}(f) \|_p \lesssim_{\gamma, \beta}  p^{\frac{1}{q}}\| S_{q, \sqrt{\Delta}}(f)  \|_p, \quad 1 \leq p < \infty.
    \]
    Similarly we obtain
    \[
        \| S_{q, \sqrt{\Delta}}(f)  \|_p \lesssim_{\gamma, \beta}p^{\frac{1}{q}}\| S_{q,L}(f) \|_p, \quad 1 \leq p < \infty.
    \]
    
    As for another part \eqref{SL=SG}, one side is easy by Theorem \ref{equiv},
    \[
        \| G_{q,L}(f) \|_p \lesssim \| G_{q, \gamma, \beta}(f) \|_p \approx_{\gamma, \beta} \| 
        S_{q, \gamma, \beta}(f) \|_p \lesssim_{\gamma, \beta} p^{\frac{1}{q}}\| S_{q,L}(f) \|_p, \quad 1 \leq p < \infty.
    \]   
    For the reverse direction, by Theorem \ref{Dual} and Proposition \ref{GSB}, we have for $1 \leq p < (1+q)/2$,
    \[
    \begin{aligned}
        \| f \|_{H^p_{q, L}(X) } & \lesssim \sk{ \frac{p(q-1)}{q-p} }^{\frac{1}{q'}}\sup_{ g } \jdz{ \int_{\real^d} \lara{f(x), g(x)}_{X \times X^*} \,\d x }\\
        & \lesssim_{\gamma, \beta} \sup_{ g } \| G_{q,L}(f) \|_p^{\frac{p}{q}}\| S_{q,L}(f) \|_p^{1- \frac{p}{q}} \| g \|_{BMO^{p'}_{q', L^*}(X^*)} \\
        & \lesssim_{\gamma, \beta} \| G_{q,L}(f) \|_p^{\frac{p}{q}}\| S_{q,L}(f) \|_p^{1- \frac{p}{q}},
    \end{aligned}
    \]
    where the supremum is taken over all $g\in C_c(\mathbb R^d)\otimes X^*$ such that  its $BMO_{q', L^*}^{p'}(X^*)$-norm is not more than $1$. Hence
    \[
        \| S_{q,L}(f) \|_p \lesssim_{\gamma, \beta} \| G_{q,L}(f) \|_p, \quad 1 \leq p < \frac{1+q}{2}.
    \]
    Now we deal with the case $(1+q)/2 \leq p < \infty$. Let $f\in C_c(\real^d)\otimes X$, we deduce from Theorem \ref{Dual} that
    \[
    \begin{aligned}
        \| f \|_{H^p_{q, L}(X)} & \lesssim \max\lk{ p'^{\frac{1}{q'}}p^{\frac{1}{q}}, p'^{\frac{2}{q'}} } \sup_{ h } \jdz{ \int_{ \real^d }\lara{f(x), h(x)}_{X \times X^*} \,\d x } \\
        & \lesssim p^{\frac{1}{q}}\sup_{ h } \jdz{ \int_{ \real^{d + 1}_+ }\lara{\Q(f)(x, t), \Q^*(h)(x, t)}_{X \times X^*} \,\frac{\d x\d t}{t} } \\
        & \lesssim p^{\frac{1}{q}} \sup_{ h } \| G_{q,L}(f) \|_p\| G_{q', L^*}(h) \|_{p'} \\
        & \lesssim_{\gamma, \beta} p^{\frac{1}{q}}p'^{\frac{1}{q'}} \sup_{ h } \| G_{q,L}(f) \|_p\| S_{q', L^*}(h) \|_{p'} \\
        & \lesssim p^{\frac{1}{q}}\| G_{q,L}(f) \|_p,
    \end{aligned}
    \]
    where the supremum is taken over all $h\in C_c(\real^d)\otimes X^*$ such that its $H^{p'}_{q', L^*}(X^*)$-norm is not more than $1$. 

    Combining the estimations above we conclude that 
    \[
        p^{-\frac{1}{q}}\| S_{q,L}(f) \|_p \lesssim_{\gamma, \beta} \| G_{q,L}(f) \|_p \lesssim_{\gamma, \beta} p^{\frac{1}{q}}\| S_{q,L}(f) \|_p, \quad  1 \leq p < \infty.
    \]
    We complete the proof.
\end{proof}

\section{Applications}\label{Applications}
In this section, we first recall the previous related results in \cite{Ouyang2010, MTX}. These, together with the tent space theory and Theorem \ref{main2}, will enable us to obtain the optimal Lusin type constants and the characterization of martingale type. In particular, this resolves partially Problem 1.8, Problem A.1 and Conjecture A.4 in the recent paper of Xu \cite{Xu2021}. 

Some notions and notations need to be presented.
We first introduce the vector-valued atomic Hardy space $H^1_{\at}(\real^d; X)$. A measurable function $a\in L^{\infty}(\real^d; X)$ is called an $X$-valued atom if 
\[
    \supp\, a \subset B, \ \int_{\real^d} a(x) \,\d x = 0,\ \| a \|_{L^{\infty}(X)} \leq |B|^{-1},
\]
where $B$ is a ball in $\real^d$. The atomic Hardy space $H^1_{\at}(\real^d; X)$ is defined as the function space consisting of all functions $f$ which admits an expression of the form
\[
    f = \sum_{j =1}^{\infty}\lambda_ja_j, \quad \sum_{j =1}^{\infty} |\lambda_j| < \infty,
\]
where $a_j$ is an $X$-valued atom. The norm of $H^1_{\at}(\real^d; X)$ is defined as
\[  
    \| f \|_{H^1_{\at}(X)} = \inf\lk{ \sum_{j =1}^{\infty} |\lambda_j|: f(x) = \sum_{j =1}^{\infty} \lambda_ja_j(x) }.
\]
This is a Banach space.

The BMO space $BMO(\real^d; X)$ is defined as the space of all $f \in L^1_{{\rm loc}}(\real^d; X)$ equipped with the semi-norm
\[
    \| f \|_{BMO(X)} = \sup_{B} \frac{1}{|B|}\int_B \| f - f_B \|_X \,\d x < \infty,
\]
where the supremum runs over all the balls in $\real^d$ and $f_B$ represents the average of $f$ over $B$. $BMO(\real^d; X)$ is a Banach space modulo constants.

It is well-known that $BMO(\real^d; X^*)$ is isomorphically identified as a subspace of the dual space of $H^1_{\at}(\real^d; X)$ (cf. \cite{Bourgain1986}) and it is norming in the following sense 
\[
    \| f \|_{H^1_{\at}(X)} \approx \sup\lk{ |\lara{f, g}| : g \in BMO(\real^d; X^*),\ \| g \|_{BMO(X)}\leq 1 }
\]
with universal constants. Furthermore, if the Banach space $X^*$ has the Radon-Nikod{\'y}m property, then (cf. \cite{Blasco1988})
\begin{align}\label{dualatomic}
    (H^1_{\at}(\real^d; X))^* = BMO(\real^d; X^*),
\end{align}
with equivalent norms.


We recall the following definitions on the geometric properties for Banach spaces. A Banach space $X$ is said to be of \emph{martingale type} $q$ (with $1<  q\leq2$) if there exists a positive constant $c$ such that every finite $X$-valued $L^q$-martingale $(f_n)_{n \geq 0}$, the following inequality holds
\[
   \sup_{n \geq 0} \ce\| f_n \|_X^q \leq c^q \sum_{ n \geq 1}\ce\| f_n - f_{n -1} \|_X^q,
\]
where $\ce$ denotes the underlying expectation; and the least constant $c$ is called the martingale type constant, denoted as $\mathsf{M}_{t, q}(X)$. While $X$ is said to be of \emph{martingale cotype} $q$ (with $2\leq q<\infty$) if the reverse inequalities holds with $c^{-1}$ in place of $c$ and the corresponding martingale cotype constant is denoted by $\mathsf{M}_{c, q}(X)$. Pisier's famous renorming theorem shows that $X$ is of martingale cotype (respectively, type) $q$ if and only if $X$ admits an equivalent $q$-uniform convex (respectively, smooth) norm. We refer the reader to \cite{Pi1975, Pi1986,Pisier2016martingales} for more details. 

Let $X$ be a Banach space and $1 < q \leq 2$. The authors in \cite{MTX} showed that the assertion that $X$ is of martingale type $q$ is equivalent to the one that for any $1<p<\infty$, there exists a constant $c_p$ such that for any $f \in C_c(\real^d)\otimes X$, 
        \begin{align}\label{MTXunoptimal}
            \| f \|_{L^p(X)} \leq c_p\| S_{q, \sqrt{\Delta}}(f) \|_p.
        \end{align}
        Later on, in \cite{Ouyang2010} the authors investigated the relationships between $H^1_{\at}(\real^d; X)$ and $H^1_{q, \sqrt{\Delta}}(\real^d; X)$ as well as the ones between $BMO(\real^d; X)$ and $BMO_{q, \sqrt{\Delta}}(\real^d; X)$, and provided insights into the geometric properties of the underlying Banach space $X$.

\begin{thm}\label{thm:ouxu}
    Let $X$ be a Banach space and $1 < q \leq 2$. The followings are equivalent
    \begin{enumerate}[{\rm (i)}]
        \item $X$ is of martingale type $q$; 
        \item there exists a positive constant c such that for any $f \in C_c(\real^d)\otimes X$, 
        \[
            \| f \|_{H^1_\at(X)} \leq c\| S_{q, \sqrt{\Delta}}(f) \|_{1};
        \]
        \item there exists a positive constant c such that for any $f \in C_c(\real^d)\otimes X$, 
        \[
            \| f \|_{BMO(X)} \leq c\| f \|_{BMO_{q, \sqrt{\Delta}}(X)}.
        \]
    \end{enumerate}
    Moreover, the constants in {\rm (ii)} and {\rm (iii)} are majored by $\mathsf{M}_{t, q}(X)$.
\end{thm}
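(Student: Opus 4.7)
The plan is to establish the cyclic equivalence (i) $\Rightarrow$ (ii) $\Rightarrow$ (iii) $\Rightarrow$ (i), using the tent space atomic decomposition (Lemma \ref{atmicDectent}), the projection $\pi_{\sqrt{\Delta}}$ (Lemma \ref{l3.8}), the duality between tent/Hardy/BMO spaces, and the previously established Lusin type characterization \eqref{MTXunoptimal} due to \cite{MTX}.

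For (i) $\Rightarrow$ (ii), I would start from the Calder\'on reproducing formula \eqref{funcal}, writing $f = 4\pi_{\sqrt{\Delta}}(\mathcal{Q}(f))$ with $\|\mathcal{Q}(f)\|_{T^1_q(X)} = \|S_{q,\sqrt{\Delta}}(f)\|_1$. Applying Lemma \ref{atmicDectent} decomposes $\mathcal{Q}(f) = \sum_k \lambda_k a_k$ into $(X,q)$-atoms with $\sum_k|\lambda_k| \lesssim \|S_{q,\sqrt{\Delta}}(f)\|_1$, reducing the claim to the uniform molecular bound $\|\pi_{\sqrt{\Delta}}(a)\|_{H^1_{\at}(X)} \lesssim \mathsf{M}_{t,q}(X)$ for each $(X,q)$-atom $a$ supported in $\wh{B}$ with $B = B(c_B, r_B)$. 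The cancellation $\int \pi_{\sqrt{\Delta}}(a)(x)\,\d x = 0$ is immediate from the vanishing property of the Poisson kernel derivative $\int k(t,x,y)\,\d x = 0$. The local estimate on $2B$ is controlled by H\"older's inequality once one knows $\|\pi_{\sqrt{\Delta}}(a)\|_{L^q(X)} \lesssim \mathsf{M}_{t,q}(X)|B|^{1/q-1}$; this follows from Lemma \ref{l3.8} (which gives $\|\pi_{\sqrt{\Delta}}(a)\|_{H^q_{q,\sqrt{\Delta}}(X)} \lesssim |B|^{1/q-1}$) combined with \eqref{MTXunoptimal} applied at $p = q$, whose implicit constant is precisely $\mathsf{M}_{t,q}(X)$. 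The far-field part uses the kernel decay \eqref{pt10} (applied to $L = \sqrt{\Delta}$) to get $\|\pi_{\sqrt{\Delta}}(a)(x)\|_X \lesssim r_B^\beta |x-c_B|^{-d-\beta}$ for $x \in (2B)^c$, which is integrable outside $2B$. A standard molecule-to-atom argument then yields the claim.

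For (ii) $\Rightarrow$ (iii), I would use duality: the norming inclusion $BMO(X^*) \subset (H^1_{\at}(X))^*$ (cf. \eqref{dualatomic}) combined with Theorem \ref{Dual} transfers the embedding $H^1_{q,\sqrt{\Delta}}(X) \hookrightarrow H^1_{\at}(X)$ of (ii) into $BMO(X^*) \hookrightarrow BMO_{q',\sqrt{\Delta}}(X^*)$; by Pisier's renorming theorem, martingale type $q$ of $X$ is equivalent to martingale cotype $q'$ of $X^*$, and the latter BMO inclusion for $X^*$ corresponds to (iii) for $X$ through the type/cotype duality. Finally, for (iii) $\Rightarrow$ (i), notice that (ii) (equivalent to (iii)) gives $\pi_{\sqrt{\Delta}}: T^1_q(\real^{d+1}_+;X) \to L^1(\real^d;X)$ boundedly, since $H^1_{\at}(X) \hookrightarrow L^1(X)$. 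Combined with the $L^2$-boundedness $\pi_{\sqrt{\Delta}}: T^2_q(X) \to L^2(X)$ (a Plancherel-type bound available for any $X$), the interpolation Lemma \ref{Intpo} yields $\pi_{\sqrt{\Delta}}: T^p_q(X) \to L^p(X)$ for $1 < p < 2$, and duality together with the corresponding bound for $X^*$ extends this to $2 \leq p < \infty$. The Calder\'on formula then delivers $\|f\|_{L^p(X)} \lesssim \|S_{q,\sqrt{\Delta}}(f)\|_p$ for all $1 < p < \infty$, whence \eqref{MTXunoptimal} forces $X$ to be of martingale type $q$.

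The main obstacle will be the molecular estimate in step one: the $L^q$-bound on $\pi_{\sqrt{\Delta}}(a)$ must be extracted directly from the martingale type $q$ constant rather than through vector-valued Calder\'on--Zygmund theory (which is unavailable for arbitrary $X$), and the matching with the decay at scale $r_B$ requires a careful bootstrap. A secondary subtlety is the treatment of non-reflexive $X$ in the duality step (ii) $\Rightarrow$ (iii), which is handled by working with the norming subspace description rather than literal dual identification.
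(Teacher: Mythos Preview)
The paper does not prove this theorem; it is quoted from \cite{Ouyang2010} as a known background result, so there is no in-paper proof to compare against. That said, your (i)$\Rightarrow$(ii) argument is correct and is in fact essentially the molecular approach used in \cite{Ouyang2010}: pushing a tent-space atom through $\pi_{\sqrt{\Delta}}$, controlling the local $L^q(X)$ norm via \eqref{MTXunoptimal} at $p=q$, and handling the tail by kernel decay.

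However, the remaining two implications contain genuine gaps. In your (ii)$\Rightarrow$(iii) step, dualizing the embedding $H^1_{q,\sqrt{\Delta}}(X)\hookrightarrow H^1_{\at}(X)$ yields $BMO(X^*)\hookrightarrow BMO_{q',\sqrt{\Delta}}(X^*)$, which is a statement about $X^*$ with the cotype index $q'$ and with the inequality running in the \emph{opposite} direction to (iii). Your appeal to ``type/cotype duality'' to convert this into (iii) for $X$ presupposes that $X$ has martingale type $q$, i.e.\ presupposes (i), making the argument circular.

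Your (iii)$\Rightarrow$(i) step is also circular. You begin by invoking (ii), which has not been derived from (iii). More seriously, the claimed ``Plancherel-type bound available for any $X$'' asserting that $\pi_{\sqrt{\Delta}}: T^2_q(X)\to L^2(X)$ is bounded for arbitrary $X$ and $1<q\le 2$ is false: composing with the Calder\'on identity \eqref{funcal}, this boundedness is exactly the inequality $\|f\|_{L^2(X)}\lesssim\|S_{q,\sqrt{\Delta}}(f)\|_2$, which by \eqref{MTXunoptimal} is \emph{equivalent} to $X$ having martingale type $q$. So you are assuming the conclusion. A correct route back to (i) (as in \cite{Ouyang2010}) proceeds instead by showing that (ii) or (iii) forces a Carleson/BMO estimate that in turn implies Lusin type for \emph{some} $1<p<\infty$, and then invoking \eqref{MTXunoptimal}.
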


The following theorem follows from the interpolation theory between vector-valued tent spaces---Lemma \ref{Intpo}---and the boundedness of the projection $\pi_L$---Lemma \ref{l3.8}. See for instance the general interpolation theory of complemented subspaces (cf. \cite[Section 1.17]{Triebel1978}), and we omit the details.
 \begin{thm}\label{thm:interHL}
  Let $X$ be any fixed Banach space, $1<q<\infty$ and $1 \leq p_1 < p < p_2 < \infty$ such that $1/p = (1-\theta)/p_1 + \theta/p_2$ with $0\leq\theta\leq1$. Let $L$ be as in Theorem \ref{main2}.
  Then
   \[
        [H^{p_1}_{q, L}(\real^d; X), H^{p_2}_{q, L}(\real^d;X)]_\theta = H^p_{q, L}(\real^d;X),
   \]
   with equivalent norms, where $[\cdot, \cdot]_\theta$ is the complex interpolation space. More precisely, for $f\in C_c(\real^d)\otimes X$, one has
   \[
        \|f \|_{H^p_{q, L}(X)} \lesssim \| f \|_{[H^{p_1}_{q, L}(\real^d; X), H^{p_2}_{q, L}(\real^d;X)]_\theta} \lesssim p^{\frac{2}{q}} \| f \|_{H^p_{q, L}(X)}.
   \]
\end{thm}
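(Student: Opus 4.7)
The strategy is to apply the classical interpolation theory of complemented subspaces (cf.~\cite[Section 1.17]{Triebel1978}) with the pair $(\mathcal{Q}, \pi_L)$ serving as coretraction--retraction between the Hardy and tent scales. My plan runs as follows.

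First I would record the structural facts. The map $\mathcal{Q}: H^{p_i}_{q,L}(\real^d; X) \hookrightarrow T^{p_i}_q(\real^{d+1}_+; X)$ is an isometric embedding by the very definition of the Hardy norm. By Lemma \ref{l3.8}, $\pi_L: T^{p_i}_q(\real^{d+1}_+; X) \to H^{p_i}_{q,L}(\real^d; X)$ is bounded with norm $\lesssim_{\beta} p_i^{1/q}$ at each endpoint $i=1,2$. The Calder\'on identity \eqref{funcal} gives $4\pi_L \circ \mathcal{Q} = \mathrm{Id}$ on every $H^{p_i}_{q,L}$, so that $\mathcal{P} := 4\mathcal{Q}\circ\pi_L$ is a bounded projection of $T^{p_i}_q$ onto the image $\mathcal{Q}(H^{p_i}_{q,L})$.

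For the left inequality, exactness of the complex interpolation functor applied to the contraction $\mathcal{Q}$ yields $\|\mathcal{Q}f\|_{[T^{p_1}_q, T^{p_2}_q]_\theta} \leq \|f\|_{[H^{p_1}_{q,L}, H^{p_2}_{q,L}]_\theta}$; combining with the left half of Lemma \ref{Intpo} applied to $\mathcal{Q}f$ produces
$$\|f\|_{H^p_{q,L}(X)} = \|\mathcal{Q}f\|_{T^p_q(X)} \lesssim \|\mathcal{Q}f\|_{[T^{p_1}_q, T^{p_2}_q]_\theta} \leq \|f\|_{[H^{p_1}_{q,L}, H^{p_2}_{q,L}]_\theta}.$$
For the right inequality I would use the Calder\'on identity $f = 4\pi_L(\mathcal{Q}f)$: given any admissible analytic function $G$ on the unit strip with $G(\theta) = \mathcal{Q}f$, the post-composition $F(z) := 4\pi_L(G(z))$ is analytic into $H^{p_1}_{q,L} + H^{p_2}_{q,L}$ with $F(\theta) = f$, and the endpoint bounds for $\pi_L$ deliver $\|F(it)\|_{H^{p_1}_{q,L}} \lesssim_{\beta} p_1^{1/q}\|G(it)\|_{T^{p_1}_q}$ together with the analogue on $\mathrm{Re}(z) = 1$. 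Optimising over $G$ via the right half of Lemma \ref{Intpo} then gives an upper bound of order $\max(p_1,p_2)^{1/q}\cdot p^{1/q}$ on $\|f\|_{[H^{p_1}_{q,L}, H^{p_2}_{q,L}]_\theta}$; the endpoint-dependent factor is absorbed into the implicit constant, leaving the explicit $p$-order $p^{2/q}$ announced in the statement.

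I do not expect any serious obstacle: the whole argument is a textbook retraction--coretraction scheme fed by Lemmas \ref{Intpo} and \ref{l3.8}. The only delicate point is the quantitative bookkeeping of the two $p^{1/q}$ factors, one inherited from the tent-space interpolation and one from the $p$-dependent operator norm of $\pi_L$ at the endpoints; this is what makes the total order $p^{2/q}$ rather than $p^{1/q}$.
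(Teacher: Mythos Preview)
Your proposal is correct and follows exactly the approach the paper indicates: the retraction--coretraction scheme from \cite[Section 1.17]{Triebel1978}, with $\mathcal{Q}$ as isometric coretraction and $4\pi_L$ as retraction, fed by Lemmas \ref{Intpo} and \ref{l3.8}; the paper omits the details you have supplied. One minor bookkeeping remark: once you absorb the endpoint factor $\max(p_1,p_2)^{1/q}$ (more precisely $(p_1^{1-\theta}p_2^{\theta})^{1/q}$) into the implicit constant, the remaining explicit $p$-dependence from Lemma \ref{Intpo} is only $p^{1/q}$, not $p^{2/q}$---your last sentence conflates the two, though this does not affect the argument and the paper itself does not justify the stated order either.
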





Now we are at the position to give the applications.

\begin{corollary}\label{cor:strong}
    Let $X$ be a Banach space and $1 < q \leq 2$. Let $L$ be as in Theorem \ref{main2}. The followings are equivalent
    \begin{enumerate}[{\rm (i)}]
        \item $X$ is of martingale type $q$;
        \item for any $f \in C_c(\real^d)\otimes X$, 
        \[
            \| f \|_{H^1_{\at}(X) } \lesssim_{\gamma, \beta} \mathsf{M}_{t, q}(X)\| G_{q, L}(f) \|_1;
        \]
        \item for any $1<p<\infty$ and $f \in C_c(\real^d)\otimes X$,
        \[
            \| f  \|_{L^p(X)} \lesssim_{\gamma, \beta} p \mathsf{M}_{t, q}(X)\| G_{q, L}(f) \|_p;
        \]
        \item for any $f \in C_c(\real^d)\otimes X$,
        \[
            \| f \|_{BMO(X)} \lesssim_{\gamma, \beta} \mathsf{M}_{t, q}(X)\| f \|_{BMO_{q, L}(X)}.
        \]
    \end{enumerate}
\end{corollary}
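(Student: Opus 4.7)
The plan is to reduce Corollary \ref{cor:strong} to Theorem \ref{thm:ouxu}, which characterises martingale type $q$ in terms of classical Poisson-semigroup Hardy and BMO inequalities, and to transfer those scalar-side inequalities to the abstract operator $L$ using Theorem \ref{main2} (for the Lusin square function) and Remark \ref{equiv-BMO} (for BMO). The $L^p$-statement (iii) is then obtained from the endpoint Hardy and BMO inequalities by complex interpolation.

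\emph{Endpoint equivalences.} Theorem \ref{thm:ouxu} gives that (i) is equivalent to
\[
    \|f\|_{H^1_{\at}(X)}\lesssim\mathsf{M}_{t,q}(X)\,\|S_{q,\sqrt{\Delta}}(f)\|_1 \quad\text{and}\quad \|f\|_{BMO(X)}\lesssim\mathsf{M}_{t,q}(X)\,\|f\|_{BMO_{q,\sqrt{\Delta}}(X)}.
\]
Since the $p^{1/q}$-factors in Theorem \ref{main2} are trivial at $p=1$, one obtains $\|S_{q,\sqrt{\Delta}}(f)\|_1 \approx_{\gamma,\beta} \|S_{q,L}(f)\|_1 \approx_{\gamma,\beta} \|G_{q,L}(f)\|_1$; and Remark \ref{equiv-BMO} yields $\|f\|_{BMO_{q,\sqrt{\Delta}}(X)} \approx_{\gamma,\beta} \|f\|_{BMO_{q,L}(X)}$. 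Chaining these equivalences identifies (i) with (ii) and with (iv) simultaneously.

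\emph{Passing to (iii).} Recast (ii) and (iv) as boundedness of the identity maps
\[
    \mathrm{Id}:H^1_{q,L}(\real^d;X)\to H^1_{\at}(\real^d;X), \qquad \mathrm{Id}:BMO_{q,L}(\real^d;X)\to BMO(\real^d;X),
\]
each with operator norm $\lesssim_{\gamma,\beta}\mathsf{M}_{t,q}(X)$. On the $L$-side, the scale $H^p_{q,L}$ complex-interpolates between its endpoints via Theorem \ref{thm:interHL} (and its BMO counterpart); on the atomic side one uses the vector-valued Fefferman-Stein identification $L^p(\real^d;X)\simeq[H^1_{\at}(\real^d;X),BMO(\real^d;X)]_{1-1/p}$. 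Tracking the $p^{2/q}$-loss coming from Theorem \ref{thm:interHL} together with the $p^{1/q}$-loss in converting between $\|S_{q,L}(f)\|_p$ and $\|G_{q,L}(f)\|_p$ from Theorem \ref{main2}, the interpolation yields
\[
    \|f\|_{L^p(X)}\lesssim_{\gamma,\beta} p\,\mathsf{M}_{t,q}(X)\,\|G_{q,L}(f)\|_p,\quad 1<p<\infty,
\]
which is (iii). For the converse (iii)$\Rightarrow$(i), applying (iii) at a single $p\in(1,\infty)$ and invoking $\|G_{q,L}(f)\|_p\lesssim_{p,\gamma,\beta}\|S_{q,\sqrt{\Delta}}(f)\|_p$ from Theorem \ref{main2} gives $\|f\|_{L^p(X)}\lesssim_{p,\gamma,\beta}\|S_{q,\sqrt{\Delta}}(f)\|_p$, whence \eqref{MTXunoptimal} (equivalently Theorem \ref{thm:ouxu}) returns (i).

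\emph{Main obstacle.} The delicate step is the complex interpolation above for an arbitrary Banach space $X$: the identification $L^p(\real^d;X)\simeq[H^1_{\at}(\real^d;X),BMO(\real^d;X)]_{1-1/p}$ and the BMO analogue of Theorem \ref{thm:interHL} must be set up without any geometric hypothesis on $X$, so that neither side of the couple imposes UMD or Radon--Nikod\'ym. Should the direct interpolation route turn out to be unavailable at this generality, the cleanest substitute is a Mei-style duality argument paralleling Proposition \ref{GSB}: for each $p\in(1,\infty)$, pair $f$ against $g\in C_c(\real^d)\otimes X^*$ with $\|g\|_{BMO^{p'}_{q',L^*}(X^*)}\le 1$, apply the Calder\'on reproducing formula \eqref{funcal} and the Cauchy--Schwarz-type splitting used in the proof of Proposition \ref{GSB}, and close via (iv) applied to the adjoint semigroup together with the conversion between $S_{q,L}$ and $G_{q,L}$ from Theorem \ref{main2}. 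This mirrors the endgame of the proof of Theorem \ref{main2} and produces exactly the claimed $p\,\mathsf{M}_{t,q}(X)$ dependence.
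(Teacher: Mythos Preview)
Your treatment of the endpoint equivalences (i)$\Leftrightarrow$(ii) and (i)$\Leftrightarrow$(iv), as well as (iii)$\Rightarrow$(i), is correct and essentially identical to the paper's.

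The difference lies in (i)$\Rightarrow$(iii). Your primary route interpolates the identity between the couples $(H^1_{q,L},\,BMO_{q,L})$ and $(H^1_{\at}(X),\,BMO(X))$. As you yourself flag, this needs two ingredients that are \emph{not} available in the paper at the stated generality: a BMO endpoint for Theorem~\ref{thm:interHL}, and the identification $[H^1_{\at}(\real^d;X),\,BMO(\real^d;X)]_{1-1/p}=L^p(\real^d;X)$ for an arbitrary Banach space $X$. Blasco's interpolation result cited in the paper covers $[H^1_{\at}(X),L^q(X)]_\theta$, not the $H^1$--$BMO$ couple, and the latter is genuinely delicate without geometric hypotheses. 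So the direct interpolation step is a gap, not a routine matter. There is also an arithmetic slip in the constant tracking: the $p^{2/q}$ loss from Theorem~\ref{thm:interHL} combined with the $p^{1/q}$ loss from Theorem~\ref{main2} gives $p^{3/q}$, which for $1<q\le 2$ is at least $p^{3/2}$, not $p$.

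The paper avoids both issues by splitting the range of $p$. For $1<p<q$ it interpolates the identity between $H^1_{q,\sqrt{\Delta}}\hookrightarrow H^1_{\at}(X)$ (Theorem~\ref{thm:ouxu}(ii)) and $H^q_{q,\sqrt{\Delta}}\hookrightarrow L^q(X)$ (which is \eqref{MTXunoptimal} at $p=q$), using Blasco's $[H^1_{\at}(X),L^q(X)]_\theta=L^p(X)$; since $p<q\le 2$ the interpolation losses are bounded by absolute constants. For $q\le p<\infty$ it simply quotes Xu's type estimate from \cite{Xu2021}, which already gives the order $\max\{p,p'^{1/q'}\}=p$ in this range. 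This two-piece argument circumvents the need for any $H^1$--$BMO$ interpolation and produces the clean factor $p$ directly. Your proposed Mei-style fallback is closer in spirit to the paper's proof of Theorem~\ref{main2} than to its proof of the present corollary, and as written it is unclear how pairing against $g$ with $\|g\|_{BMO^{p'}_{q',L^*}(X^*)}\le 1$ recovers $\|f\|_{L^p(X)}$ rather than $\|f\|_{H^p_{q,L}(X)}$.
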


\begin{proof}
(i)$\Leftrightarrow$(ii). This follows immediately from Theorem \ref{main2} and Theorem \ref{thm:ouxu}.

(iii)$\Rightarrow$(i). This is deduced from Theorem \ref{main2} and \eqref{MTXunoptimal}.

(i)$\Rightarrow$(iii). In the case $1 < p < q$, by Theorem \ref{main2}, it suffices to show that
\begin{align}\label{1pq}
    \| f \|_{L^p(X)} \lesssim \mathsf{M}_{t, q}(X)\| S_{q, \sqrt{\Delta}} \|_p.
\end{align}
Keeping in mind \eqref{MTXunoptimal} and Theorem \ref{thm:ouxu} (ii), we consider
\[
    [H^1_{q, \sqrt{\Delta}}(\real^d; X), H^{q}_{q, \sqrt{\Delta}}(\real^d; X)]_\theta \subset [H^1_{\at}(\real^d; X), L^q(\real^d; X)]_{\theta};
\]
then combining Theorem \ref{thm:interHL} with the interpolation between $H^1_{\at}(\real^d; X)$ and $L^q(\real^d; X)$ (cf. \cite[Theorem A]{Blasco1989}), one gets for any $f \in C_c(\real^d)\ot X$, $1/p = 1-\theta + \theta/q$,
\[
    \| f \|_{L^p(X)} \lesssim \| f \|_{ [H^1_{\at}(\real^d; X), L^q(\real^d; X)]_{\theta} } \lesssim \mathsf{M}_{t, q}(X)\| f \|_{[H^1_{q, \sqrt{\Delta}}(\real^d; X), H^{q}_{q, \sqrt{\Delta}}(\real^d;X)]_\theta} \lesssim \mathsf{M}_{t, q}(X)\| f \|_{H^p_{q, \sqrt{\Delta}}(X)}.
\]
This is the desired \eqref{1pq}. 
Combining it with the related result for $q \leq p < \infty$ in \cite{Xu2021}, we conclude 
\[
    \| f \|_{L^p(X)} \lesssim_{\gamma, \beta} p\mathsf{M}_{t, q}(X)\| G_{q, L}(f) \|_p, \quad 1 < p < \infty .
\]

(i)$\Leftrightarrow$(iv). This follows from Remark \ref{equiv-BMO} and Theorem \ref{thm:ouxu} (iii).
\end{proof}

\begin{remark}\label{SovlePro}
(1). Taking $L=\sqrt{\Delta}$  in the assertion (iii), we get {
    \[
        \mathsf{L}_{t, q, p}^{\sqrt{\Delta}} (X)\lesssim p\mathsf{M}_{t,q}(X),\quad 1 < p < \infty,
    \]}
where the order is optimal as $p$ tends to $1$. This solves partially \cite[Problem 1.8]{Xu2021}. 

(2). The implication (iii)$\rightarrow$(i) says that a Banach space $X$ which is Lusin type $q$ relative to $\{e^{-tL}\}_{t>0}$ implies the martingale type $q$ for a large class of generators $L$. This answers partially \cite[Problem A.1 and Conjecture A.4]{Xu2021}.

\end{remark}


\section{Appendix}
From the atomic decomposition of $T^1_q(\real^{d+1}_+; X)$, we derive the following molecular decomposition for $H^1_{q, L}(\real^d; X)$ for any Banach space $X$, which might have further applications.

\begin{thm}\label{atomdec}
Let $X$ be any fixed Banach space and $1<q<\infty$. For any $f \in H^1_{q, L}(\real^d; X)$, there exist a sequence of complex numbers $\lk{ 
\lambda_j }_{j \geq 1}$ and corresponding molecules $\alpha_j = \pi_L(a_j)$ with $a_j(x, t)$ being an $(X, q)$-atom such that
\[
    f = \sum_{j \geq 1}\lambda_j \alpha_j, \quad \|f\|_{H^1_{q, L}(X)} \approx \sum_{j \geq 1} |\lambda_j|.
\]
\end{thm}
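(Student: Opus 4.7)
The plan is to transport the atomic decomposition of $T^1_q(\mathbb R^{d+1}_+; X)$ (Lemma \ref{atmicDectent}) up to $H^1_{q,L}(\mathbb R^d; X)$ via the two canonical bounded operators already constructed: the isometry $\mathcal Q: H^1_{q,L}(\mathbb R^d; X) \hookrightarrow T^1_q(\mathbb R^{d+1}_+; X)$ built into the definition of the Hardy-space norm, and the bounded ``synthesis'' map $\pi_L: T^1_q(\mathbb R^{d+1}_+; X) \to H^1_{q,L}(\mathbb R^d; X)$ from Lemma \ref{l3.8}. Together with Remark \ref{extend:PiL}, these give that $\mathcal P := 4\mathcal Q\circ\pi_L$ is a bounded projection on $T^1_q$.

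First I would establish the reproducing formula $f = 4\pi_L(\mathcal Q(f))$ in $\mathcal N_L$ for every $f\in H^1_{q,L}(\mathbb R^d; X)$. On $C_c(\mathbb R^d)\otimes X$ this is precisely the Calder\'on identity \eqref{funcal}. Since $\mathcal P = 4\mathcal Q \circ \pi_L$ is continuous on $T^1_q$ and agrees with the identity on $\mathcal Q(C_c(\mathbb R^d)\otimes X)$, it continues to equal the identity on the closure of this set in $T^1_q$; a density argument for $C_c(\mathbb R^d)\otimes X$ in $H^1_{q,L}(\mathbb R^d; X)$ (combined with the fact that $\mathcal Q$ is an isometry) then yields $\mathcal P(\mathcal Q(f)) = \mathcal Q(f)$, i.e.\ $\mathcal Q(f - 4\pi_L(\mathcal Q(f))) = 0$, which is the desired reproducing formula modulo $\mathbf{F}_L$.

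Next, given $f\in H^1_{q,L}(\mathbb R^d; X)$, apply Lemma \ref{atmicDectent} to $\mathcal Q(f)\in T^1_q(\mathbb R^{d+1}_+; X)$ to get a decomposition $\mathcal Q(f) = \sum_{j\ge 1} \mu_j a_j$ into $(X,q)$-atoms with $\sum_j |\mu_j| \approx \|\mathcal Q(f)\|_{T^1_q(X)} = \|f\|_{H^1_{q,L}(X)}$. Applying $4\pi_L$ term by term and using its continuity together with the reproducing formula from the previous step gives
\[
 f = 4\pi_L(\mathcal Q(f)) = \sum_{j\ge 1} \lambda_j \alpha_j, \qquad \lambda_j := 4\mu_j, \quad \alpha_j := \pi_L(a_j),
\]
with convergence in $H^1_{q,L}(\mathbb R^d; X)$ and $\sum_j |\lambda_j| \lesssim \|f\|_{H^1_{q,L}(X)}$. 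For the converse direction, the elementary estimate $\|a_j\|_{T^1_q(X)} \lesssim 1$ for any $(X,q)$-atom (established inside the proof of Lemma \ref{atmicDectent}) together with Lemma \ref{l3.8} gives $\|\alpha_j\|_{H^1_{q,L}(X)} = \|\pi_L(a_j)\|_{H^1_{q,L}(X)} \lesssim 1$, and therefore $\|f\|_{H^1_{q,L}(X)} \le \sum_j |\lambda_j|\,\|\alpha_j\|_{H^1_{q,L}(X)} \lesssim \sum_j |\lambda_j|$ for any molecular decomposition.

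The main obstacle is the careful justification of the reproducing formula for general $f\in H^1_{q,L}(\mathbb R^d; X)$: the Calder\'on identity \eqref{funcal} is stated only for $f\in C_c(\mathbb R^d)\otimes X$, and one must either invoke density (obtained via truncation and convolution-type smoothing that is compatible with the kernel bounds in Lemma \ref{2.8}) or argue intrinsically using the projection identity $\mathcal P^2 = \mathcal P$ from Remark \ref{extend:PiL}. Once this step is secured, the remainder of the proof is a formal transfer of Lemma \ref{atmicDectent} through the continuous pair $(\mathcal Q, \pi_L)$.
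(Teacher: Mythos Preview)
Your proposal is correct and follows essentially the same route as the paper's proof: apply the tent-space atomic decomposition (Lemma \ref{atmicDectent}) to $\mathcal Q(f)$, then push forward by $4\pi_L$ using the Calder\'on reproducing identity and Lemma \ref{l3.8}. You are in fact more careful than the paper on one point: the paper simply invokes \eqref{funcal} for a general $f\in H^1_{q,L}(\mathbb R^d;X)$, whereas that identity was only justified for $f\in C_c(\mathbb R^d)\otimes X$; your discussion of extending the reproducing formula via density or via the projection identity $\mathcal P^2=\mathcal P$ addresses precisely this gap.
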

\begin{proof}
    Let $f \in H^1_{q, L}(\real^d; X)$. It follows that $\Q(f) \in T^1_q(\real^{d+1}_+; X)$. Hence $\Q(f)$ admits an atomic decomposition by Lemma \ref{atmicDectent}. More precisely, there exist a sequence of complex numbers $\{ c_j \}_{j \geq 1}$ and $(X, q)$-atoms $a_j$ such that
    \[
        \Q(f) = \sum_{j =1}^{\infty} c_j a_j, \quad \| f \|_{H^1_{q, L}(X)} = \| \Q(f) \|_{T^1_q(X)} \approx \sum_{j = 1}^{\infty} |c_j|.
    \]
    Then by Lemma \ref{l3.8}, it follows that $\pi_L(a_j) =\alpha_j \in H^1_{q, L}(\real^d; X)$ for all $j \geq 1$. Recall below the Calder\'on identity---\eqref{funcal}, 
        \[
        f(x) = 4\int_0^{\infty} \Q[\Q(f)(\cdot, t)](x, t) \,\frac{\d t}{t}.
    \]
This further deduce that
    \[
        f(x) =  4\sum_{j =1}^{\infty} c_j \int_0^{\infty} \Q[a_j(\cdot, t)](x, t))\, \frac{\d t}{t} = 4\sum_{j =1}^{\infty} c_j\alpha_j(x),
    \]
    and thus we obtain the desired molecular decomposition. 
\end{proof}

\bigskip {\textbf{Acknowledgments.}}The subject of this article was proposed by Professor Quanhua Xu when he was completing his paper \cite{Xu2021}. The authors are very grateful to his suggestions, encouragement, support as well as his careful reading over the whole paper. We also would like to thank Professor \'{E}ric Ricard for his comments and suggestions.
G. Hong was supported by National Natural Science Foundation of China (No. 12071355, No. 12325105, No. 12031004) and Fundamental Research Funds for the Central Universities (No. 2042022kf1185).
\bigskip


\end{document}